\documentclass[11pt,a4paper]{article}

\usepackage{flafter,amsmath,amssymb,latexsym,psfrag,graphicx,color,indentfirst,bm,amsthm}
\usepackage{mathrsfs}
\usepackage{bbm}
\usepackage{hyperref}
\usepackage{graphicx}
\usepackage{fullpage}
\usepackage{verbatim}
\usepackage[T1]{fontenc}
\usepackage[utf8]{inputenc}
\numberwithin{equation}{section}
\usepackage[margin=0.9in]{geometry}

\usepackage{ifthen}
\usepackage{comment}

\usepackage{todonotes}
\newcommand{\todoautc}[3][]{%
	\ifthenelse{\equal{#1}{}}{\todo[size=\scriptsize]{{\bf#2} #3}{}}{\todo[backgroundcolor=#1,tickmarkheight=.5em,linecolor=orange,size=\scriptsize]{{\bf#2} #3}{}}%
}

\newcommand{\noteC}[1]{\todoautc[cyan!50]{}{#1}}

\newtheorem{theorem}{Theorem}[section]
\newtheorem{lemma}[theorem]{Lemma}
\newtheorem{proposition}[theorem]{Proposition}
\newtheorem{corollary}[theorem]{Corollary}
\theoremstyle{definition}
\newtheorem{definition}[theorem]{Definition}
\newtheorem{remark}[theorem]{Remark}
\newtheorem{assumption}[theorem]{Assumption}

\newcommand{\R}{\mathbb{R}}
\newcommand{\N}{\mathbb{N}}

\newcommand{\Om}{\Omega}
\newcommand{\eps}{\varepsilon}

\newcommand{\dd}{\,\mathrm{d}}
\newcommand{\norm}[1]{\left\| #1 \right\|}

\newcommand{\kappam}{\kappa_m}

\begin{document}

\title{
\Large \textbf{Feedback Stabilization and Tracking for Heat Equations}\\ \textbf{Using Thermo-Plasmonic Nanoparticles as Actuators}
}

\author{Arpan Mukherjee\footnote{MSU-BIT SMBU Joint Research Center of Applied Mathematics, Shenzhen MSU-BIT University, Shenzhen, People's Republic of China (arpan.mukherjee@smbu.edu.cn and arpanmath99@alumni.iitm.ac.in).} ,\ Sérgio S. Rodrigues\thanks{  Center for Mathematics and Applications (NOVA Math) and Department of Mathematics, NOVA School of Science and Technology (NOVA FCT), NOVA University of Lisbon, Campus de Caparica
2829-516,  Portugal (ssi.rodrigues@fct.unl.pt).}
\ and
Mourad Sini\thanks{Radon institute, RICAM, Austrian Academy of Sciences, Altenbergerstrasse 69, 4040 Linz, Austria (mourad.sini@oeaw.ac.at).}}

\date{\today}

\maketitle

\begin{abstract}

We propose and analyze a strategy for tracking prescribed heat profiles in a
medium by using plasmonic nanoparticles as actuators. The starting point is a
thermo--plasmonic model coupling Maxwell's equations to a heat equation in a
homogeneous background containing a discrete collection of highly contrasting
nanoparticles. A recently derived time-domain discrete effective model in \cite{CaoMukherjeeSini2025} shows
that, outside the particle region, the heat generated by the nanoparticles is
well approximated by a superposition of heat kernels centered at the particle
locations, with time-dependent amplitudes solving a coupled Volterra system.

\medskip
\noindent
Our first step is to recast this discrete thermo--plasmonic description as a
heat equation with finitely many point actuators on a bounded control domain
$\Omega$. We then design and analyze a heat-profile
tracking feedback law based on pointwise evaluations of $\mathcal A^{-1}y$,
where $\mathcal A=I-A_0$ and $A_0$ is the Neumann diffusion operator. The
closed-loop dynamics is treated in the natural $V'$ framework, where $V := D(\mathcal A)$, and we obtain
exponential stabilization of the tracking error in $V'$ by relying on the
distribution-actuator theory in \cite{KunRodWal24-cocv}. For general
(non-equilibrium) reference profiles we incorporate a constant feedforward and
a fixed-point pre-compensation on a low-mode space $X_N$, which yields exact
matching of the steady target on $X_N$ together with an explicit estimate of
the remaining tail mismatch.

\medskip
\noindent
Finally, we couple the thermo--plasmonic effective model with the feedback
design and quantify the discrepancy between the ideal closed-loop temperature
and the physically realized one. The resulting tracking bound separates (i) a
projection error due to restricting the abstract feedback input to a
realizable finite-dimensional space and (ii) a thermo--plasmonic remainder
that vanishes as the particle size $\delta\to0$. We also make explicit how
plasmonic resonances enter the effective actuation map and how resonant tuning
can improve control authority and conditioning. This provides a rigorous
framework for using plasmonic nanoparticles as actuators for heat-profile
tracking, with potential relevance for nano-localized photothermal therapy and
temperature management in nano-engineered materials.

\end{abstract}

\section{Introduction}

\subsection{Thermo-plasmonic heating and effective medium theory}

\noindent Metallic (plasmonic) nanoparticles exhibit strong absorption and local field
enhancement when illuminated near their plasmonic resonances. This leads to
highly localized heat generation that can be exploited in a wide range of
applications, including photothermal cancer therapy, thermal imaging,
nano-manipulation and temperature management in micro- and nano-devices; see,
for instance, \cite{BaffouQuidant2013,BaffouNM2020,Chuang2022,
Jorgensen2016,Kim2023, Vines2019}. We discuss in Subsection~\ref{subsec:plasmonic-resonances} how resonant tuning enters the asymptotic modeling and improves the effective actuation strength.

\noindent From a modeling viewpoint, quantitative descriptions of light-to-heat
conversion in plasmonic systems typically couple a time-harmonic Maxwell
problem, governing the electromagnetic field, to a heat equation with
volumetric source term representing ohmic losses. In the physical and
engineering literature a variety of models have been proposed to describe the
temperature rise around single particles and suspensions of particles, often
relying on phenomenological or numerical approaches
\cite{BaffouQuidant2013,BaffouNM2020,PhotothermalJAP2016,WuSheldon2023}. On
the mathematical side, asymptotic and effective-medium descriptions have been
developed for plasmonic nanoparticles in the quasi-static and subwavelength
regimes, with a particular emphasis on the role of resonances and high
contrast; see, e.g.,
\cite{AmmariMillienRuizZhang2017,AmmariRomeroRuiz2018,
CaoMukherjeeSini2025,Mukherjee2022} and the references therein.

\noindent In \cite{CaoMukherjeeSini2025}, a rigorous time-domain effective model for
heat generation by a finite collection of plasmonic nanoparticles in an
otherwise homogeneous background was obtained, under subwavelength and
high-contrast scaling assumptions. The main result shows that the temperature
difference $w$ between the actual temperature and the background solution
without particles can be approximated, outside the particle region, by a
finite superposition of heat kernels centered at the nanoparticle centers,
with time-dependent amplitudes $\sigma_i$ solving a Volterra-type integral
system. The error is quantified in suitable norms, and the amplitudes are
shown to depend linearly (up to higher-order terms) on the internal
electromagnetic energy generated by the incident illumination. This discrete
description is based on a Foldy--Lax point-approximation framework applied to
the underlying Maxwell scattering problem.

\noindent The representation of $w$ as a superposition of heat kernels localized at the
particle centers is the key structural feature that we exploit here. It
suggests that, at least in the effective description, a suitably designed
ensemble of plasmonic nanoparticles can be interpreted as a system of point
actuators for the heat equation, remotely driven by electromagnetic inputs.

\subsection{Heat-profile tracking and point actuators}

\noindent  Independently of the thermo--plasmonic context, the control and stabilization
of linear parabolic equations has been extensively studied; see, e.g.,
\cite{Barbu2018,KrsticSmyshlyaev2008,Pazy1983,TucsnakWeiss2009,RaymondBook}
and the references therein. A particularly relevant class of problems for the
present work involves finite-dimensional actuation and observation, where a
heat equation is driven by a finite number of internal or boundary actuators
and observed through finitely many output functionals.

\noindent Pointwise (Dirac-type) actuation and observation models are natural
idealizations when the actual sources and sensors are localized on spatial
scales much smaller than the size of the domain. In the context of the heat
equation, controllability and stabilization issues for point actuators and for
controls acting on small or moving subsets have been investigated in various
settings; see, for instance, \cite{CasZuazua,
FardigolaKhalina2022,Hamidoglu2013,Letrouit2019, Martinez2015,Rodrigues2021} and the references
therein. The analysis of such systems typically relies on semigroup methods,
spectral decompositions and Carleman estimates.
\bigskip

\noindent In the present paper, we consider a controlled heat equation on a bounded
domain $\Omega\subset\R^d$, mainly $d=3$, with homogeneous Neumann boundary conditions, driven by finitely many point actuators. We design an output feedback based on
\emph{elliptically regularized} point measurements, namely pointwise
evaluations of $\mathcal A^{-1}y$, where $\mathcal A=I-A_0$ and $A_0$ is the
Neumann diffusion operator. For a prescribed reference profile $y_r$, the
feedback has the form
\[
u_j(t) = (u_r)_j-\lambda\Big[(\mathcal A^{-1}y)(x_j,t) - (\mathcal A^{-1}y_r)(x_j)\Big],
\qquad j=1,\dots,M,
\]
where $\lambda>0$ is a gain and $u_r\in\R^M$ is a constant feedforward used to
compensate the fact that non-constant references are not equilibria of the
free Neumann heat dynamics (and which is further refined via a fixed-point
pre-compensation on low modes; see Section~\ref{subsec:fixed-point-bias}).
Working in the $V'$ framework, with $V:=D(\mathcal A)$, we use the stabilization theory in \cite{KunRodWal24-cocv} to obtain exponential convergence of the tracking
error in $V'$ for this class of feedbacks; see Section~\ref{sec:stab-KRW}.

\subsection{Objective and main contributions}
\noindent
The goal of this work is to connect rigorously the discrete thermo--plasmonic
effective model of \cite{CaoMukherjeeSini2025} with heat-profile tracking
feedback laws for a heat equation with point actuators of \cite{KunRodWal24-cocv} to propose and analyse a strategy for tracking prescribed heat profiles in a medium by using plasmonic nanoparticles as actuators. We achieve this goal with the following steps. 
\begin{itemize}
\item[(i)] We first recall the discrete thermo--plasmonic
effective model of \cite{CaoMukherjeeSini2025}, in Section \ref{sec:model}, and then recast it  as a heat equation
on a bounded domain driven by finitely many point actuators, in a
functional-analytic setting suitable for control design. In particular, we
identify the effective point-source amplitudes in terms of the Volterra system
for the nanoparticle amplitudes and provide a quantitative justification for
restricting the whole-space heat propagation in $\R^3$ to a bounded Neumann
control domain on compact subsets away from $\partial\Omega$ (Lemma~\ref{lem:R3-to-Omega-restriction}). Precisely, the temperature field satisfies in $\R^3$
\[
\partial_t y-\kappa_m\Delta y = Q_\delta(p),
\]
where $Q_\delta(p)$ is the nanoparticle-induced volumetric source supported in $\delta$-type sources located of the particle centers. Upon restriction to a bounded Neumann control domain $\Omega$ containing the actuator locations $\{x_j\}_{j=1}^M$, we recast the dynamics as
\[
\dot y = A_0 y + Bu,
\qquad
Bu=\sum_{j=1}^M u_j\,\delta_{x_j},
\]
posed in the extrapolation space $V'$, where $V: =D(\mathcal A)$, with $\mathcal A=I-A_0$. Then, in Section \ref{sec:point-actuators}, we establish well-posedness of this problem improving the related results in \cite{KunRodWal24-cocv}, allowing more general shapes $\Omega$ with $C^{1,1}$-regularity; see Proposition \ref{prop:wp-Dirac}.

\item[(ii)] We state, in Section \ref{sec:feedback}, the feedback design and stability analysis by projecting into finite-dimensional spaces with a control of the tails. Then, in Section \ref{sec:stab-KRW}, we establish well-posedness and basic stability properties of a
feedback law for heat-profile tracking based on pointwise evaluations of
$\mathcal A^{-1}y$, interpreted as a closed-loop parabolic system with point
actuators. We place the closed loop in the distribution-actuator framework of
\cite{KunRodWal24-cocv} in the state space $V'$ and obtain exponential
convergence to an equilibrium. For non-equilibrium target profiles we
incorporate a feedforward term and a fixed-point pre-compensation on a low-mode
space $X_N$, yielding exact matching of the first $N$ Neumann modes of the
steady reached profile and an explicit tail estimate; see
Section~\ref{subsec:fixed-point-bias}. Namely, the feedback uses the elliptically regularized point-evaluation operator
\[
C:V'\to\R^M,
\qquad
Cz=\big((\mathcal A^{-1}z)(x_j)\big)_{j=1}^M,
\]
and, for references $y_r\in X_N:=\mathrm{span}\{\varphi_1,\ldots,\varphi_N\}$, takes the form
\[
u(t)=u_r-\lambda\,C\big(y(t)-y_r\big),
\]
with $u_r\in\R^M$ chosen so that $P_N(A_0 y_r+Bu_r)=0$.

\item[(iii)] We couple the effective thermo--plasmonic model with the feedback
design and quantify the difference between the ideal closed-loop
temperature (in the abstract control model) and the temperature realized
physically through plasmonic actuation, see Section \ref{sec:coupling}. At this level, we obtain a tracking
bound that separates a projection error (restriction of the abstract feedback
to a realizable finite-dimensional control space) from a thermo--plasmonic
modeling remainder that vanishes as $\delta\to0$, where $\delta$ is the maximum radius of the nanoparticles, and we explain how plasmonic
resonances enter the actuation map and can be exploited to improve control
authority and conditioning (Subsection~\ref{subsec:plasmonic-resonances}). Precisely, at the level of the thermo-plasmonic effective model, the nanoparticle-induced heat input admits the representation
\[
G_\delta(p)=Kp+R_\delta p,
\qquad
\|R_\delta\|_{\mathcal L(U,L^2(0,T;\R^M))}\xrightarrow{\,\delta\to0\,}0,
\]
where $K\in\mathcal L\bigl(U,L^2(0,T;\R^M)\bigr)$ is a bounded linear actuation operator mapping illumination controls to effective point-source amplitudes.

\item[(iv)] In Section~\ref{sec:robust-Vprime}, we derive robustness bounds that quantify the effect of implementing the feedback only approximately. More precisely, if the realized heat input differs from the ideal Dirac-actuator feedback by a discrepancy term in $D(\mathcal A)'$, then the induced tracking deviation is controlled in $C([0,T];V')\cap L^2(0,T;H)$ on finite horizons, with an additional $L^2(\Omega)$ estimate on $[t_0,T]$ for any $t_0>0$.

\item[(v)] In Section~\ref{rem:link-original-problem}, we explain how the main tracking theorem translates back to the original thermo--plasmonic control problem. The resulting estimate separates (a) the abstract closed-loop tracking error, (b) a projection error due to restricting the feedback input to a realizable subspace $Y_0$, and (c) a thermo--plasmonic realization remainder that vanishes as $\delta\to0$.
\par\smallskip
In particular, the implemented tracking error admits a decomposition of the form
\[
y-y_r=\mathcal E_{\mathrm{proj}}+\mathcal E_{\delta},
\]
where $\mathcal E_{\mathrm{proj}}$ corresponds to restricting the ideal feedback input to a realizable subspace $Y_0$, and $\mathcal E_{\delta}$ is induced by the thermo--plasmonic remainder.

\item[(vi)] In Section~\ref{sec:algorithm}, we provide an algorithmic design that turns the theory into a computable feedback loop: we build an implementable feedback law (including the feedforward and fixed-point correction), identify/assemble a finite-dimensional representation of the thermo--plasmonic map $\mathcal K$ on $U_0\to Y_0$, and use a (pseudo-)inverse to recover admissible incident-field intensities that realize the desired reduced heat input.

\item[(vii)] Finally, in Section~\ref{appendix},  as an appendix, we collect the technical ingredients used throughout the analysis, including verifications of the standing spectral and actuator assumptions (on $C^{1,1}$ domains) and sufficient conditions ensuring solvability of the fixed-point correction for general reference profiles.

\end{itemize}

\noindent The main message is that---under the scaling and smallness assumptions
guaranteeing the validity of the discrete thermo--plasmonic model, and under
standard controllability-type conditions on the actuator locations and on the illumination patterns---it is meaningful to interpret a configuration of
plasmonic nanoparticles as a system of actuators for the heat equation. By exciting the nanoparticles with suitably designed electromagnetic incident fields, one
can approximate abstract feedback laws and hence track a desired heat profile. The strategy yields exact matching of the desired profile in a prescribed finite-dimensional spectral subspace (by projection) and an explicit estimate of the remaining tail mismatch; see Section~\ref{subsec:fixed-point-bias}.
\bigskip

\noindent We finish this introduction by mentioning two idealizations that are used in the feedback design. First, the coefficient vector $p(t):=(p_l(t))^m_{l=1}\in\mathbb{R}^m$ is treated as a signed control acting linearly through the illumination dictionary (coming from the electromagnetic fields generated by the injected nanoparticles), but physically, throught the electromagnetism model, it appears of the form $p_\ell(t)=|a_\ell(t)|^2\ge 0$ (where $a_l$ are related to the electric field intensity).
This can be handled by restricting to effective inputs attainable with $p\in (\mathbb{R}_+)^m$, see Remark \ref{Positivity-p} for more details.
Second, the output map is taken as the elliptically regularized point evaluation $Cz=\big((A^{-1}z)(x_j)\big)_{j=1}^M$, which is well-defined on the natural state space for Dirac-actuated heat dynamics and enables stability estimates in $V'$ (and hence in $L^2$ for $t\ge t_0>0$).
However, in practice, this corresponds to one more step of reconstructing these regularized outputs from physically available measurements (as those collected as local averages or only near the boundary of $\Omega$, for instance). These two issues, i.e. incorporating explicit nonnegativity and realistic sensing, will be discussed in a forthcoming work.

\section{Thermo-plasmonic model and discrete effective description}
\label{sec:model}
\noindent
We briefly recall the thermo--plasmonic model and the discrete effective
description from \cite{CaoMukherjeeSini2025}. We only state the parts needed
for the control analysis; the reader is referred to
\cite{CaoMukherjeeSini2025} for details and precise assumptions.

\subsection{Coupled Maxwell--heat model in $\R^3$}
\noindent
The physical space is $\R^3$ and the medium consists of a homogeneous
background characterized by constant volumetric heat capacity $c_m>0$,
thermal conductivity $\gamma_m>0$, and permittivity $\eps_m>0$, together with
$M$ plasmonic nanoparticles
\[
D_i := z_i + \delta B_i,\qquad i=1,\dots,M,
\]
where $z_i\in\R^3$ are the centers, $\delta\ll1$ is a small scaling
parameter, and the shapes $B_i$ are bounded $C^2$ domains with
$\mathrm{vol}(B_i)\sim1$. The particle material is characterized by
$c_p,\gamma_p,\eps_p$ and we denote the piecewise-constant coefficients
\[
c_v := c_p\chi_D + c_m\chi_{\R^3\setminus D},\quad
\gamma := \gamma_p\chi_D + \gamma_m\chi_{\R^3\setminus D},\quad
\eps := \eps_p\chi_D + \eps_m\chi_{\R^3\setminus D},
\]
where $D:=\bigcup_{i=1}^M D_i$.
\newline
The electric field $E$ solves a time-harmonic Maxwell scattering problem at
frequency $k$,
\begin{equation}
\label{eq:maxwell}
\begin{cases}
\operatorname{curl}\operatorname{curl}E - k^2\eps E = 0 & \text{in }\R^3,\\
E = E^{\mathrm{in}} + E^{\mathrm{sc}},\\
E^{\mathrm{sc}}\ \text{satisfies the Silver--M\"uller condition,}
\end{cases}
\end{equation}
for a given incident field $E^{\mathrm{in}}$. The choice of $\eps_p$
follows a Drude model and the frequency $k$ is chosen near a plasmonic
resonance; see \cite{AmmariRomeroRuiz2018,
CaoMukherjeeSini2025, Mukherjee2022}.
\newline
The temperature $u=u(x,t)$ is governed by a heat equation for $x\in\R^3$ and
$t>0$:
\begin{equation}
\label{eq:heat-transmission}
\begin{cases}
c_v(x)\,\partial_t u - \nabla\cdot(\gamma(x)\nabla u) = J(x,t)
& \text{in }\R^3\times(0,T),\\[0.3em]
u(x,0) = 0 & \text{in }\R^3,
\end{cases}
\end{equation}
where the source term represents volumetric heating due to ohmic losses,
\begin{equation}\label{eq:heat=J}
J(x,t) := \frac{k}{2\pi}\Im\eps(x)\,|E(x)|^2 f(t),
\end{equation}
with a causal temporal profile $f\in H^{r+1/2}_{0,\sigma}(0,T)$ and where~$\Im \eps$ denotes the imaginary part of the permittivity $\eps$, which is taken positive valued and supported on the nanoparticles, meaning that the background medium (outside the cluster of nanoparticles) is lossless; see
\cite{CaoMukherjeeSini2025} for more details and the precise functional framework. The real valued 
constants
\[
\kappam := \frac{c_m}{\gamma_m},\qquad \kappa_p := \frac{c_p}{\gamma_p}
\]
denote the diffusion constants in the background and inside the particles.
\newline
The main object of interest is the temperature difference
$w:=u-u^{(0)}$, where $u^{(0)}$ is the solution of the homogeneous-background
problem (i.e.\ without particles). This difference satisfies a parabolic
transmission problem with source terms supported in the nanoparticle region
$D$.

\subsection{Discrete thermo-plasmonic effective model}
\noindent
We denote by $\Phi^{(m)}(x,t;y,\tau)$ the fundamental solution of the
homogeneous heat operator $\partial_t-\kappam\Delta$ in $\R^3$:
\begin{equation}
\label{eq:heat-kernel}
\Phi^{(m)}(x,t;y,\tau)
= \frac{1}{\big(4\pi\kappam(t-\tau)\big)^{3/2}}
\exp\left(-\frac{|x-y|^2}{4\kappam(t-\tau)}\right)
\chi_{(0,\infty)}(t-\tau).
\end{equation}
\noindent
Under subwavelength, separation and high-contrast assumptions, the main
effective result in \cite{CaoMukherjeeSini2025} can be summarized as follows.

\begin{theorem}[Discrete thermo-plasmonic effective model]
\label{thm:discrete-effective}
Assume the scaling and regularity hypotheses of \cite{CaoMukherjeeSini2025},
in particular the subwavelength, separation, and high-contrast conditions on
$(c_p,\gamma_p,\eps_p)$ and the nanoparticles $D_i = z_i + \delta B_i$.
Then there exist functions
\[
\sigma_i\in H^1(0,T),\qquad i=1,\dots,M,
\]
such that the temperature difference $w=u-u^{(0)}$ satisfies, for
$x\in\R^3\setminus D$ and $t\in(0,T)$,
\begin{equation}
\label{eq:discrete-expansion}
w(x,t) = -\sum_{i=1}^M \frac{\alpha_i}{c_m}\int_0^t
\Phi^{(m)}(x,t;z_i,\tau)\,\sigma_i(\tau)\,\dd\tau + \mathcal{E}(x,t),
\end{equation}
where $\alpha_i:=\gamma_{p,i}-\gamma_m$ denotes the heat-conductivity
contrast, and the error $\mathcal{E}$ admits the bound
\[
\norm{\mathcal{E}(\cdot,t)}_{L^2(K)} \le C M\delta^\mu,\qquad t\in[0,T],
\]
for each compact $K\subset\R^3\setminus D$, for some $\mu>0$ depending on
the scaling exponents and on $K$.
\newline
Moreover, the amplitudes $\sigma_i$ solve a Volterra integral system of the
form
\begin{equation}
\label{eq:Volterra-system}
\sigma_i(t) + \sum_{j\ne i}\beta_{ij} \int_0^t
\partial_t\Phi^{(m)}(z_i,t;z_j,\tau)\,\sigma_j(\tau)\,\dd\tau
= F_i(t) + r_i(t),
\end{equation}
where the interaction coefficients $\beta_{ij}$ depend on the geometry and
contrast parameters, the forcing terms $F_i$ depend linearly on the internal
electromagnetic energy $|E|^2$ in $D_i$, and the remainders $r_i$ satisfy
\[
\sum_{i=1}^M \norm{r_i}_{H^1(0,T)} \le C M\delta^{\mu'}
\]
for some $\mu'>0$. The system \eqref{eq:Volterra-system} is uniquely
solvable in $(H^1(0,T))^M$ and the $\sigma_i$ satisfy uniform a priori
bounds in terms of the $F_i$.
\end{theorem}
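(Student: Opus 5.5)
This theorem summarizes the effective result of \cite{CaoMukherjeeSini2025}. The plan reconstructs its proof in four steps: integral representation, small-particle asymptotics, a Foldy--Lax closure, and Volterra well-posedness.

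\textbf{Step 1: transmission problem and integral representation.} I would start from the transmission problem satisfied by $w=u-u^{(0)}$. Subtracting the homogeneous-background equation from \eqref{eq:heat-transmission} gives
\[
c_m\partial_t w-\gamma_m\Delta w=\nabla\cdot\big((\gamma-\gamma_m)\nabla u\big)-(c_v-c_m)\partial_t u+\big(J-J^{(0)}\big),
\]
with all sources supported in $D$. Green's formula with the kernel \eqref{eq:heat-kernel} then gives a Lippmann--Schwinger-type representation of $w$. Its volume and surface heat potentials are supported on $D\times(0,t)$, with densities given by the flux $\gamma\partial_\nu u$ and the contrast terms on each $D_i$.

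\textbf{Step 2: small-particle asymptotics.} For $x\in K$, a compact set with $\mathrm{dist}(K,D)>0$, I would Taylor-expand $\Phi^{(m)}(x,t;y,\tau)$ around $y=z_i$. The heat kernel is smooth there uniformly in $\tau<t$, and the boundary layer near $\tau\to t$ is harmless because $|x-y|$ is bounded below. The leading term defines the amplitude $\sigma_i(\tau)$ as a suitably scaled integral over $\partial D_i$ or $D_i$ of the density; the coefficient $-\alpha_i/c_m$ appears after normalization. The higher-order terms carry extra powers of $\delta$ times norms of the densities. I would bound those density norms by a priori estimates for the transmission problem in anisotropic Sobolev spaces $H^{r,r/2}$, using the scaling of the contrasts. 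This produces $\|\mathcal E(\cdot,t)\|_{L^2(K)}\le CM\delta^\mu$.

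\textbf{Step 3: Foldy--Lax closure.} To derive \eqref{eq:Volterra-system}, I would evaluate the integral equation for the densities on each particle $D_i$. The self-interaction gives the identity term, after inverting a boundary-integral operator of Neumann--Poincaré/heat-layer type on $\partial B_i$. This inversion is where high contrast and the resonance structure of $\eps_p$ enter, and it yields $\beta_{ij}$. The interaction with $D_j$, $j\neq i$, is replaced by the point-kernel interaction at $(z_i,z_j)$. The time derivative appears because the flux densities are obtained from time derivatives of the potentials. The Maxwell source $J$ contributes $F_i$, which is linear in $\int_{D_i}\Im\eps_p|E|^2$. Here I would use the Maxwell Foldy--Lax point approximation of the electric field to replace $|E|^2$ by its effective value, with controlled error. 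Collecting all discretization errors into $r_i$, I would estimate them in $H^1(0,T)$ to obtain $\sum_i\|r_i\|_{H^1}\le CM\delta^{\mu'}$.

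\textbf{Step 4: solvability of the Volterra system.} For $i\neq j$, the kernel $\partial_t\Phi^{(m)}(z_i,t;z_j,\tau)$ is smooth and vanishes to infinite order as $t-\tau\to0$, since $|z_i-z_j|\ge d_{\min}>0$. The system is therefore a second-kind Volterra equation with a bounded kernel. Neumann-series iteration, or a weighted norm $e^{-\eta t}$, gives unique solvability in $(H^1(0,T))^M$ and a priori bounds $\|\sigma\|\le C\|F+r\|$.

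\textbf{Main obstacle.} I expect Step 3 to be hardest: making the $\delta$-uniform invertibility of the single-particle operator and the Maxwell energy approximation quantitative, since this is where the exponents $\mu,\mu'$ and the resonance conditions are really determined. In the paper I would cite \cite{CaoMukherjeeSini2025} for this step rather than reprove it.
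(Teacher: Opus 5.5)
The paper gives no proof of this theorem: it is stated only as a summary of the main result of \cite{CaoMukherjeeSini2025}. Your plan defers the decisive asymptotic/Foldy--Lax step, and with it the actual exponents $\mu,\mu'$ and the precise scaling hypotheses, to that same reference, and your Step 4 correctly proves the Volterra solvability (the off-diagonal kernels are smooth and vanish to infinite order at $t=\tau$, so the Volterra operator is quasinilpotent and $I+\mathcal W$ is invertible on $(H^1(0,T))^M$), so this matches how the paper treats it. One correction to your Step 3 heuristic: the plasmonic resonance of $\varepsilon_p$ plays no role in the heat-side single-particle inversion or in the $\beta_{ij}$, which come from the thermal contrasts and the particle geometry; it enters only through the Maxwell problem, i.e.\ through the absorbed powers that determine the forcing $F_i$, as in Subsection~\ref{subsec:plasmonic-resonances}.
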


\begin{remark}
The precise exponents $\mu,\mu'$ and the constants in the above estimates
depend on the detailed asymptotic regime chosen in \cite{CaoMukherjeeSini2025}.
For the control-oriented discussion below, it is enough to retain that the
errors vanish as $\delta\to0$ and that the $\sigma_i$ are uniformly bounded
in $H^1(0,T)$ under the considered scaling regime.
\end{remark}

\subsection{Plasmonic resonances and actuation strength}
\label{subsec:plasmonic-resonances}
\noindent
The efficiency of the actuation mechanism is based on the magnitude of the internal
electromagnetic energy $\int_{D_i}|E|^2$ entering the forcing terms $F_i$ in
\eqref{eq:Volterra-system}. This quantity is strongly amplified when the
illumination frequency is tuned near a \emph{plasmonic resonance} of the nanoparticles.
We recall below how resonances enter the modeling in the language of the
Maxwell--Lippmann--Schwinger formulation, based on the magnetization operator and
the Helmholtz decomposition as in \cite{CaoMukherjeeSini2025}.

\medskip
\noindent
\textbf{Magnetization operator and resonant resolvent.}
In the time-harmonic Maxwell problem \eqref{eq:maxwell} with piecewise-constant
permittivity $\eps=\eps_p\chi_D+\eps_m\chi_{\R^3\setminus D}$, the electric field
satisfies a Maxwell--Lippmann--Schwinger equation of the form
\begin{equation}
\label{eq:LS-magnetization}
E(x) + \eta \sum_{j=1}^M M^{(k)}_{D_j}[E](x) - k^2\eta \sum_{j=1}^M N^{(k)}_{D_j}[E](x) = E^{\mathrm{in}}(x),
\qquad x\in D,
\end{equation}
where $\eta:=\eps_p-\eps_m$ and, with $G^{(k)}$ the Helmholtz Green's function,
\[
M^{(k)}_{D}[f](x):=\nabla \int_D \nabla G^{(k)}(x,y)\cdot f(y)\,dy,
\qquad
N^{(k)}_{D}[f](x):=\int_D G^{(k)}(x,y)\, f(y)\,dy.
\]
To analyze \eqref{eq:LS-magnetization} in each particle $D_i=\delta B_i+z_i$, we use
the Helmholtz decomposition
\begin{equation}
\label{eq:Helmholtz-decomp}
L^2(D_i)^3=H_0(\mathrm{div}\,0,D_i)\oplus H_0(\mathrm{curl}\,0,D_i)\oplus \nabla \mathrm{Harm}(D_i),
\end{equation}
and denote by $\mathbb P^{(3)}$ the orthogonal projector onto $\nabla\mathrm{Harm}(D_i)$.
In the subwavelength regime $\delta k\ll 1$, the dominant amplification occurs on
$\nabla\mathrm{Harm}(D_i)$, where the quasi-static magnetization operator
$M^{(0)}_{B_i}:\nabla\mathrm{Harm}(B_i)\to\nabla\mathrm{Harm}(B_i)$ admits an
orthonormal eigenbasis $\{e^{(3)}_{i,n}\}_{n\in\mathbb N}$ with eigenvalues
$\{\lambda^{(3)}_{i,n}\}_{n\in\mathbb N}$, so that
\[
M^{(0)}_{B_i}[e^{(3)}_{i,n}] = \lambda^{(3)}_{i,n}\, e^{(3)}_{i,n}.
\]
Projecting \eqref{eq:LS-magnetization} onto $\nabla\mathrm{Harm}(D_i)$ and scaling
$D_i$ to $B_i$ yields, at leading order,
\begin{equation}
\label{eq:mode-balance}
\bigl(I+\eta M^{(0)}_{B_i}\bigr)\,\mathbb P^{(3)}\widehat E_i
\;\approx\; \mathbb P^{(3)}\widehat E^{\mathrm{in}}_i \;+\; \text{(particle--particle interaction terms)}.
\end{equation}
Consequently, the modal coefficients satisfy
\[
\langle \mathbb P^{(3)}\widehat E_i, e^{(3)}_{i,n}\rangle
\;\approx\; \frac{1}{1+\eta\lambda^{(3)}_{i,n}}\,
\Big(\langle \mathbb P^{(3)}\widehat E^{\mathrm{in}}_i, e^{(3)}_{i,n}\rangle+\cdots\Big).
\]
Plasmonic resonances correspond to frequencies (through $\eps_p(k)$, hence $\eta$)
for which a \emph{dispersion relation} $1+\eta\lambda^{(3)}_{i,n}=0$ is (approximately)
satisfied for some index $n=n_0$. In the asymptotic regime used in
\cite{CaoMukherjeeSini2025}, the frequency is chosen so that
\begin{equation}
\label{eq:resonance-scaling}
|1+\eta\lambda^{(3)}_{i,n}|\sim
\begin{cases}
\delta^{h}, & n=n_0,\\
1, & n\neq n_0,
\end{cases}
\end{equation}
which implies the resolvent growth
$\|(I+\eta M^{(0)}_{B_i})^{-1}\|_{\mathcal L(L^2(B_i)^3)}\lesssim \delta^{-h}$ on
$\nabla\mathrm{Harm}(B_i)$ and yields the near-field enhancement
$\|\mathbb P^{(3)}E\|_{L^2(D_i)} \sim \delta^{3/2-h}$ (and similarly in $L^4$), up to
interaction corrections.

\medskip
\noindent
\textbf{Polarization tensor and absorbed power.}
The effective electromagnetic coupling entering the discrete heat model
\eqref{eq:Volterra-system} is encoded by the polarization matrix $P_{D_i}$ appearing
in the Foldy--Lax system for the projected moments of $E$ (see \cite{CaoMukherjeeSini2025}).
In the magnetization-operator spectral variables, one has the representation
\begin{equation}
\label{eq:polarization-magnetization}
P_{D_i} \;=\; \delta^3 \sum_{n\in\mathbb N}\frac{1}{1+\eta\lambda^{(3)}_{i,n}}
\left(\int_{B_i} e^{(3)}_{i,n}(\xi)\,d\xi\right)\otimes
\left(\int_{B_i} e^{(3)}_{i,n}(\xi)\,d\xi\right)
\;+\; O(\delta^3),
\end{equation}
so that, under \eqref{eq:resonance-scaling}, the dominant resonant contribution scales like
$P_{D_i}\sim \delta^{3-h} P_{B_i}$ for an explicitly computable shape-dependent matrix
$P_{B_i}$.
Since the ohmic source term in \eqref{eq:heat-transmission} is
$J(x,t)=\frac{k}{2\pi}\Im\eps(x)\,|E(x)|^2 f(t)$, the total absorbed power in $D_i$ satisfies
\begin{equation}\label{Q_i-s}
Q_i \;:=\; \frac{k}{2\pi}\Im\eps_p(k)\int_{D_i}|E(x)|^2\,dx
\;\approx\; \Im\eps_p(k)\, \delta^{3-2h}\,\mathfrak A_i \,\big|E^{\mathrm{in}}(z_i)\big|^2,
\end{equation}
with an amplification factor $\mathfrak A_i$ determined by the resonant mode(s) in
\eqref{eq:polarization-magnetization} and by the inter-particle Foldy--Lax coupling.
In particular, tuning the illumination near resonance increases $Q_i$ by orders of
magnitude compared to off-resonant operation.

\medskip
\noindent
\textbf{Consequences for control conditioning and tracking.}
In the control reduction developed below, the finite-dimensional map from
illumination parameters $p\in U_0$ to effective heat inputs $u\in Y_0$ is encoded by
the matrix/operator $K_0$ (Section~\ref{sec:coupling}). The resonant scaling
$P_{D_i}\sim \delta^{3-h}$ increases the gain of each actuator channel and typically
improves the conditioning of $K_0$ (larger effective singular values), which:
\begin{itemize}
\item reduces amplification in the right-inverse $K_0^\dagger$ needed to realize a
prescribed $u_{\mathrm{des}}\in Y_0$,
\item improves robustness with respect to modeling and measurement errors, and
\item allows one to enrich the realizable control space $Y_0$ (hence reduce projection
error) for a fixed admissible illumination energy.
\end{itemize}
Thus, plasmonic resonances are not only the physical origin of strong local heating,
but also a mathematical lever to strengthen actuation and improve tracking accuracy.

\section{Heat equation with point actuators on a bounded domain}
\label{sec:point-actuators}
\noindent
We now introduce the abstract control model for the heat equation with point
actuators, in a bounded domain $\Om\subset\R^3$. This section is purely
functional-analytic and independent of the thermo--plasmonic model, except
for the fact that the background diffusion coefficient is $\kappam$.

\subsection{Physical model and abstract reformulation}
\noindent
Let $\Om\subset\R^3$ be a bounded, connected, $C^2$ domain. The restriction
of the background heat equation to $\Om$ (with homogeneous Neumann boundary
conditions) reads
\begin{equation}
\label{eq:3.1}
\begin{cases}
\partial_t y - \kappam\Delta y
= \displaystyle\sum_{j=1}^M u_j(t)\,\delta_{x_j}
& \text{in }\Om\times(0,T),\\[0.3em]
\partial_\nu y = 0 & \text{on }\partial\Om\times(0,T),\\[0.3em]
y(\cdot,0) = y_0 & \text{in }\Om.
\end{cases}
\end{equation}

\begin{lemma}[Restriction from $\R^3$ to a bounded control domain]
\label{lem:R3-to-Omega-restriction}
Let $\Omega\subset\R^3$ be a bounded $C^2$ domain and let $B\Subset\Omega$ be an
open set satisfying $\mathrm{dist}(B,\partial\Omega)=:d>0$. Consider a source
term $f(\cdot,t)$ supported in $B$ for each $t\in(0,T)$ and let $w$ be the
whole-space solution of
\[
\partial_t w-\kappam\Delta w=f\quad\text{in }\R^3\times(0,T),\qquad w(\cdot,0)=0,
\]
while $y$ solves the Neumann problem
\[
\partial_t y-\kappam\Delta y=f\quad\text{in }\Omega\times(0,T),\qquad
\partial_\nu y=0\ \text{on }\partial\Omega\times(0,T),\qquad y(\cdot,0)=0.
\]
Then, for all $t\in(0,T]$,
\begin{equation}
\label{eq:R3-to-Omega-bound}
\|w(t)-y(t)\|_{L^2(B)}
\le C\int_0^t (t-\tau)^{-3/4}\exp\!\Big(-\frac{c\,d^2}{t-\tau}\Big)\,
\|f(\tau)\|_{L^2(B)}\,\dd\tau,
\end{equation}
where $C,c>0$ depend only on $\Omega$, $B$ and $\kappam$. In particular,
\begin{equation}
\label{eq:R3-to-Omega-simple}
\sup_{t\in[0,T]}\|w(t)-y(t)\|_{L^2(B)}
\le C\,\exp\!\Big(-\frac{c\,d^2}{T}\Big)\,\|f\|_{L^1(0,T;L^2(B))}.
\end{equation}
\end{lemma}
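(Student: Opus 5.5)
The plan is to compare the two solutions through their difference on $\Omega$. Set $e:=w|_{\Omega}-y$. Since $w$ solves the same equation with the same source in $\Omega$, $e$ satisfies
\[
\partial_t e-\kappam\Delta e=0\ \text{in }\Omega\times(0,T),\qquad e(\cdot,0)=0,\qquad \partial_\nu e=g:=\partial_\nu w\ \text{on }\partial\Omega\times(0,T).
\]
So the whole mismatch comes from the Neumann flux of the whole-space solution on $\partial\Omega$. Because $\operatorname{supp}f(\cdot,t)\subset B$ and $\partial\Omega$ lies at distance at least $d$ from $B$, this flux should be exponentially small.

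\textbf{Step 1 (flux bound).} Write $w(x,t)=\int_0^t\int_B\Phi^{(m)}(x,t;z,s)f(z,s)\dd z\dd s$. For $x\in\partial\Omega$ and $z\in B$ we have $|x-z|\ge d$. Hence
\[
|\nabla_x\Phi^{(m)}(x,t;z,s)|\le C(t-s)^{-5/2}|x-z|\,e^{-|x-z|^2/(4\kappam(t-s))}\le C(t-s)^{-k}e^{-2c\,d^2/(t-s)},
\]
for some power $k$ and after slightly reducing the Gaussian constant. Applying Cauchy--Schwarz in $z$ over the bounded set $B$ and integrating over the bounded surface $\partial\Omega$ gives
\[
\|g(s)\|_{L^2(\partial\Omega)}\le C\int_0^s (s-\tau)^{-k}e^{-2c d^2/(s-\tau)}\|f(\tau)\|_{L^2(B)}\dd\tau .
\]

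\textbf{Step 2 (propagation back into $B$).} Represent $e$ through the Neumann heat kernel $G_N$ of $\Omega$ (Green's formula):
\[
e(x,t)=\kappam\int_0^t\int_{\partial\Omega}G_N(x,t;\xi,s)\,g(\xi,s)\dd\sigma(\xi)\dd s .
\]
I will invoke the standard Gaussian upper bound for $C^2$ (indeed $C^{1,1}$) bounded domains on finite time intervals:
\[
0\le G_N(x,t;\xi,s)\le C(t-s)^{-3/2}e^{-c_0|x-\xi|^2/(t-s)} .
\]
For $x\in B$ and $\xi\in\partial\Omega$ we again have $|x-\xi|\ge d$. This gives
\[
\|e(t)\|_{L^2(B)}\le C\int_0^t (t-s)^{-k'}e^{-2c d^2/(t-s)}\|g(s)\|_{L^2(\partial\Omega)}\dd s .
\]
Securing this kernel bound, and justifying the representation formula with merely $L^2$-type boundary flux, is the step I expect to be the main technical obstacle. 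If a clean reference is unavailable, the fallback is a duality/energy argument: localize with a cutoff vanishing near $\partial\Omega$ and equal to $1$ on $B$, and use Caccioppoli-type estimates. That route is more cumbersome in tracking the exponential factor.

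\textbf{Step 3 (combining and cleaning up).} Insert Step 1 into Step 2 and apply Fubini. The two Gaussian factors combine via $\frac1a+\frac1b\ge\frac{4}{a+b}$ with $a=t-s$ and $b=s-\tau$:
\[
e^{-2cd^2/a}\,e^{-2cd^2/b}\le e^{-8cd^2/(t-\tau)} .
\]
The polynomial singularities $a^{-k'}b^{-k}$ are harmless, because $r^{-m}e^{-\epsilon d^2/r}$ is bounded by a constant depending on $d$. So, after reducing $c$, they can be absorbed into a factor $e^{-\epsilon d^2/a}e^{-\epsilon d^2/b}$. The remaining $s$-integral is bounded by $C(t-\tau)$. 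Finally, I rewrite the bounded factor as $(t-\tau)^{-3/4}$ times a bounded quantity ($T$ is finite), again absorbing into the exponential. This yields \eqref{eq:R3-to-Omega-bound}.

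For \eqref{eq:R3-to-Omega-simple}, split the exponential into two halves. The quantity $r^{-3/4}e^{-cd^2/(2r)}$ is bounded uniformly for $r\in(0,T]$. Also $e^{-cd^2/(2(t-\tau))}\le e^{-cd^2/(2T)}$. Renaming $c$ and taking the supremum over $t$ then gives the stated bound with $\|f\|_{L^1(0,T;L^2(B))}$.
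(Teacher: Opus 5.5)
Your proof is correct, but it is organized differently from the paper's sketch. The paper stays in the interior. It writes $(w-y)(x,t)=\int_0^t\int_B\big(\Phi^{(m)}-G_\Omega\big)f\,\dd\xi\,\dd\tau$ for $x\in B$ and treats as standard an off-diagonal bound $|\Phi^{(m)}-G_\Omega|\le C(t-\tau)^{-3/2}e^{-cd^2/(t-\tau)}$ for $x,\xi\in B$. It then concludes by a Young/Schur-type inequality.

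You instead move the whole mismatch onto the Neumann flux $\partial_\nu w$ on $\partial\Omega$. You bound that flux with the explicit whole-space kernel, carry it back into $B$ with the Neumann kernel, and combine the two Gaussian factors via $1/a+1/b\ge 4/(a+b)$. Your only external input is the finite-time Gaussian upper bound for $G_\Omega$ with one argument on $\partial\Omega$, which is standard on $C^{1,1}$ domains. Your argument in fact proves the kernel-difference estimate the paper takes for granted. To see this, apply it to $\Phi^{(m)}(\cdot;\xi,\tau)-G_\Omega(\cdot,\xi,\cdot-\tau)$, which solves the homogeneous Neumann problem with flux $\partial_\nu\Phi^{(m)}$. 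So your route is longer but self-contained, while the paper's is shorter but rests on an unproved claim.

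The justification you flag in Step 2 is not a real obstacle. Because $|x-z|\ge d$ keeps the kernel away from its singularity, $g$ is bounded and continuous on $\partial\Omega\times[0,T]$. The Green representation therefore follows from uniqueness of weak solutions of the inhomogeneous Neumann problem. Alternatively, subtract $\theta w$, with $\theta$ equal to $1$ near $\partial\Omega$ and $0$ near $\overline B$, and use only the interior Duhamel formula.

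You should make explicit that your constants depend on $T$, through the finite-time Gaussian bound for $G_\Omega$ and the step $(t-\tau)\le T^{7/4}(t-\tau)^{-3/4}$. The statement instead says they depend only on $\Omega$, $B$, $\kappam$, but this dependence cannot be removed from \eqref{eq:R3-to-Omega-bound}. Take $f(x,t)=\phi(x)\mathbf{1}_{(0,1)}(t)$ with $\phi\ge0$ supported in $B$ and $\int\phi=1$. Mass conservation for the Neumann problem gives $y(t)\to|\Omega|^{-1}$, while $w(t)\to0$. Hence $\|w(t)-y(t)\|_{L^2(B)}\to|B|^{1/2}/|\Omega|$, whereas the right-hand side is $O(t^{-3/4})$. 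The paper's sketch has the same hidden $T$-dependence, since $G_\Omega\to|\Omega|^{-1}$ as $t-\tau\to\infty$ while $\Phi^{(m)}\to0$.
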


\begin{proof}[Proof sketch]
Both solutions admit Duhamel representations in terms of the whole-space heat
kernel $\Phi^{(m)}$ and the Neumann heat kernel $G_\Omega$ on $\Omega$.
For $x\in B$ one has
\[
(w-y)(x,t)=\int_0^t\!\int_{B}\big(\Phi^{(m)}(x,t;\xi,\tau)-G_\Omega(x,\xi,t-\tau)\big)\,
f(\xi,\tau)\,\dd\xi\,\dd\tau.
\]
Standard Gaussian bounds for $G_\Omega$ and off-diagonal estimates for the
difference $G_\Omega-\Phi^{(m)}$ when $x,\xi\in B$ yield
\[
|\Phi^{(m)}(x,t;\xi,\tau)-G_\Omega(x,\xi,t-\tau)|
\le C(t-\tau)^{-3/2}\exp\!\Big(-\frac{c\,d^2}{t-\tau}\Big),
\]
which implies \eqref{eq:R3-to-Omega-bound} by Young's inequality. Estimate
\eqref{eq:R3-to-Omega-simple} follows by bounding the convolution kernel on
$(0,T]$.
\end{proof}

\begin{remark}
Lemma~\ref{lem:R3-to-Omega-restriction} provides a quantitative justification
for replacing the whole-space heat propagation by a bounded-domain Neumann
model when actuators, nanoparticles and the region of interest are contained
in a subdomain $B$ at positive distance from the boundary. In particular, for
time horizons $T\ll d^2$ the restriction error is exponentially small in
$d^2/T$.
\end{remark}
\subsection{Dirac actuators and low-regularity well-posedness}
\label{subsec:wp-Dirac-KRW}
\noindent
We refer to \cite{KunRodWal24-cocv} for distribution-type actuator frameworks
in closely related parabolic stabilization settings.
\newline
We set
\begin{equation}
\label{eq:3.2}
H := L^2(\Om),\qquad
A_0 y := \kappam\Delta y,\quad
D(A_0) := \{ y\in H^2(\Om): \partial_\nu y = 0 \text{ on }\partial\Om\}.
\end{equation}
Then $A_0$ generates an analytic contraction semigroup
$S(t):H\to H$ for $t\ge0$.
\newline
The control operator is defined by
\begin{equation}
\label{eq:3.3}
B:\R^M\to D(A_0)',\qquad
B u := \sum_{j=1}^M u_j\,\delta_{x_j}.
\end{equation}
With this notation, \eqref{eq:3.1} is equivalent to the abstract system
\begin{equation}
\label{eq:3.4}
\dot y(t) = A_0 y(t) + B u(t),\qquad y(0)=y_0.
\end{equation}
Let $H_{-1}$ be the extrapolation space associated with $A_0$ (completion of $H$ with respect to
$\|x\|_{H_{-1}} := \|(I-A_0)^{-1} x\|_{H}$).
\newline
Introduce the coercive elliptic operator
\[
\mathcal A := I - A_0 = I - \kappa_m \Delta,
\qquad D(\mathcal A)=D(A_0),
\]
which is an isomorphism $\mathcal A: D(\mathcal A)\to H$ and satisfies
$D(\mathcal A)\hookrightarrow H^2(\Omega)\hookrightarrow C(\bar\Omega)$.

\medskip
\noindent
\textbf{Dirac actuators.}
Fix $M\in\N$ and actuator locations $x_1,\dots,x_M\in\Omega$ and define the distributions
\[
d_j := \delta_{x_j}\in D(\mathcal A)'\qquad (1\le j\le M).
\]
Define the control operator
\[
B:\R^M\to D(\mathcal A)',\qquad
Bu := \sum_{j=1}^M u_j\,d_j.
\]
This $B$ coincides with the operator in \eqref{eq:3.3} (since $D(\mathcal A)=D(A_0)$).

\begin{proposition}[Boundedness of $B$ into $D(\mathcal A)'$]
\label{prop:B-bounded-DA-dual}
There exists $C_B>0$ such that
\[
\|Bu\|_{D(\mathcal A)'} \le C_B |u|_{\R^M}\qquad \forall u\in\R^M.
\]
\end{proposition}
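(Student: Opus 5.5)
The plan is to show that each Dirac mass $d_j=\delta_{x_j}$ is a bounded linear functional on $D(\mathcal A)$, equipped with the graph norm $\|v\|_{D(\mathcal A)}:=\|\mathcal A v\|_H$. This norm is equivalent to the $H^2(\Omega)$-norm on $D(\mathcal A)$. Boundedness of $B$ then follows by linearity and Cauchy--Schwarz in $\R^M$.

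\textbf{Step 1: the embedding $D(\mathcal A)\hookrightarrow H^2(\Omega)$.} For $v\in D(\mathcal A)$, set $g:=\mathcal A v\in H$, so that $v-\kappa_m\Delta v=g$ in $\Omega$ with $\partial_\nu v=0$ on $\partial\Omega$. Testing with $v$ gives $\|v\|_{H^1(\Omega)}\le C\|g\|_H$. Elliptic $H^2$-regularity for the Neumann problem on the bounded $C^2$ (indeed $C^{1,1}$ suffices) domain $\Omega$ then yields
\[
\|v\|_{H^2(\Omega)}\le C_1\bigl(\|g\|_{H}+\|v\|_{H}\bigr)\le C_2\|\mathcal A v\|_{H}.
\]
This is the only step that uses the regularity of $\partial\Omega$. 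I expect it to be the main point to check, and I will simply invoke the standard Neumann regularity theorem (Grisvard, for instance) for $C^{1,1}$ domains.

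\textbf{Step 2: the embedding $H^2(\Omega)\hookrightarrow C(\bar\Omega)$.} Since $d=3<4=2\cdot 2$, the Sobolev embedding holds on the bounded Lipschitz (hence extension) domain $\Omega$. It gives $\sup_{\bar\Omega}|v|\le C_3\|v\|_{H^2(\Omega)}$, where each $v\in H^2(\Omega)$ is identified with its continuous representative. Point evaluations are therefore well defined, and
\[
|\langle d_j,v\rangle|=|v(x_j)|\le C_2C_3\|v\|_{D(\mathcal A)}\quad\text{for each } j.
\]
Hence $\|d_j\|_{D(\mathcal A)'}\le C_2C_3$.

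\textbf{Step 3: boundedness of $B$.} For $u\in\R^M$ and $v\in D(\mathcal A)$,
\[
|\langle Bu,v\rangle|=\Bigl|\sum_{j=1}^M u_j v(x_j)\Bigr|\le \Bigl(\sum_{j=1}^M|v(x_j)|^2\Bigr)^{1/2}|u|_{\R^M}\le \sqrt M\,C_2C_3\,\|v\|_{D(\mathcal A)}\,|u|_{\R^M}.
\]
Taking the supremum over $\|v\|_{D(\mathcal A)}\le1$ gives the claim with $C_B:=\sqrt M\,C_2C_3$. This constant depends only on $\Omega$, $\kappa_m$ and $M$, and not on the positions $x_j\in\Omega$.
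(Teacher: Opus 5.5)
Your proposal is correct and follows the paper's proof: you bound point evaluation through the Sobolev embedding $H^2(\Omega)\hookrightarrow C(\overline\Omega)$ and the elliptic equivalence of the graph norm on $D(\mathcal A)$ with the $H^2$-norm, then sum over the $M$ actuators. The only difference is that you spell out the Neumann $H^2$-regularity step and the Cauchy--Schwarz estimate giving $C_B=\sqrt M\,C_2C_3$, both of which the paper invokes in a single line.
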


\begin{proof}
For $\phi\in D(\mathcal A)$ we have
$|\langle d_j,\phi\rangle| = |\phi(x_j)|\le C\|\phi\|_{H^2(\Omega)}
\le C\|\phi\|_{D(\mathcal A)}$ by Sobolev embedding and elliptic norm equivalence on
$D(\mathcal A)$. Hence
\[
|\langle Bu,\phi\rangle|
\le \sum_{j=1}^M |u_j|\,|\phi(x_j)|
\le C |u|_{\R^M}\|\phi\|_{D(\mathcal A)}.
\]
Taking the supremum over $\|\phi\|_{D(\mathcal A)}=1$ yields the claim.
\end{proof}

\medskip
\noindent
\textbf{Low-regularity solution concept.}
For $T>0$ and a right-hand side $f\in L^2(0,T;D(\mathcal A)')$ we consider
\begin{equation}
\label{eq:heat-DAdual}
\dot y(t) = A_0 y(t) + f(t)\quad\text{in }D(\mathcal A)',\qquad y(0)=y_0\in V'.
\end{equation}
A (weak/mild) solution is a function
\[
y\in W(0,T;H,D(\mathcal A)'):=\{y\in L^2(0,T;H):\dot y\in L^2(0,T;D(\mathcal A)')\}
\]
satisfying \eqref{eq:heat-DAdual} in $D(\mathcal A)'$.

\begin{proposition}[Well-posedness for Dirac-forced heat equation]
\label{prop:wp-Dirac}
\label{prop:well-posed-point}
Let $T>0$, $y_0\in V'$ and $u\in L^2(0,T;\R^M)$. Set $f=Bu\in L^2(0,T;D(\mathcal A)')$.
Then \eqref{eq:heat-DAdual} admits a unique solution $y\in W(0,T;H,D(\mathcal A)')$.
Moreover,
\begin{equation}
\label{eq:wp-estimates}
\|y\|_{L^2(0,T;H)}^2 + \|y\|_{C([0,T];V')}^2
\le C_T\Big(\|y_0\|_{V'}^2 + \|u\|_{L^2(0,T;\R^M)}^2\Big),
\end{equation}
for a constant $C_T$ depending on $T$, $\Omega$ and $\kappa_m$.
\end{proposition}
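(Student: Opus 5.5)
The plan is to treat \eqref{eq:heat-DAdual} as a Lions-type variational problem in the shifted Gelfand triple $H\hookrightarrow D(\mathcal A)'$. We apply $\mathcal A^{-1}$, which is an isomorphism $D(\mathcal A)'\to H$ and $V'\to H$ after the identification $V=D(\mathcal A)$ and the pivot $H$. Concretely, set $z:=\mathcal A^{-1}y$. Since $\mathcal A^{-1}$ commutes with $A_0$, the problem \eqref{eq:heat-DAdual} is equivalent to
\[
\dot z = A_0 z + \mathcal A^{-1}f,\qquad z(0)=\mathcal A^{-1}y_0\in H.
\]
Here $g:=\mathcal A^{-1}Bu=\sum_j u_j\,\mathcal A^{-1}\delta_{x_j}\in L^2(0,T;H)$, with $\|g\|_{L^2(0,T;H)}\le C_B\|u\|_{L^2(0,T;\R^M)}$ by Proposition~\ref{prop:B-bounded-DA-dual}. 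The norm identities are $\|y\|_{V'}=\|z\|_H$, $\|y\|_H=\|z\|_{D(\mathcal A)}$ and $\|\dot y\|_{D(\mathcal A)'}=\|\dot z\|_H$. Hence the claim is equivalent to maximal $L^2$-regularity for the lifted problem: $z\in L^2(0,T;D(\mathcal A))\cap H^1(0,T;H)$ with initial datum only in $H$. That is the wrong trace space, so this needs to be split.

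Step 1 handles the initial datum. We write $z=z_1+z_2$ with $z_1=S(t)\mathcal A^{-1}y_0$. Since $A_0$ is self-adjoint and nonpositive, the spectral calculus in the Neumann eigenbasis $\{\varphi_k\}$ with eigenvalues $-\mu_k\le 0$ gives the identity
\[
\int_0^T\|\mathcal A z_1\|_H^2\,\dd t=\sum_k(1+\mu_k)^2\frac{1-e^{-2\mu_kT}}{2\mu_k}|\hat z_{0,k}|^2,
\]
with the obvious modification for $\mu_k=0$. Since $(1+\mu_k)^2(1-e^{-2\mu_k T})/(2\mu_k)$ grows like $\mu_k$, this identity only controls $\int_0^T\|z_1\|_{D(\mathcal A)}^2\,\dd t$ by $C_T\|z_0\|_{D(\mathcal A^{1/2})}^2$, not by $\|z_0\|_H^2$. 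This is the main obstacle I foresee: with $y_0\in V'=D(\mathcal A)'$ alone, $y=S(t)y_0$ is in general not in $L^2(0,T;H)$. The statement is consistent only if $V'$ is meant as the interpolation space $[H,D(\mathcal A)']_{1/2}=D(\mathcal A^{1/2})'$, which is the natural trace space of $W(0,T;H,D(\mathcal A)')$.

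I would therefore first check the convention in \cite{KunRodWal24-cocv}, where the pivot setup makes $V'$ the dual of the form domain. I would then prove the estimate with $\|y_0\|_{V'}=\|\mathcal A^{-1/2}y_0\|_H$. The spectral identity then gives $\|y\|_{L^2(0,T;H)}^2+\|y\|_{C([0,T];V')}^2\le C_T\|y_0\|_{V'}^2$ for the free part. This amounts to the standard energy estimate obtained by testing with $\mathcal A^{-1}y$.

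Step 2 handles the forcing. For $z_2$ with zero initial datum, maximal $L^2$-regularity of the self-adjoint generator $A_0$ on $H$, again checked modewise via Young's inequality for the convolution with $e^{-\mu_k t}$, gives $\|z_2\|_{L^2(0,T;D(\mathcal A))}+\|\dot z_2\|_{L^2(0,T;H)}\le C_T\|g\|_{L^2(0,T;H)}$. Transforming back, $y=\mathcal A z\in W(0,T;H,D(\mathcal A)')$. Continuity in time into the interpolation space $V'$ follows from the Lions--Magenes trace lemma for $W(0,T;H,D(\mathcal A)')$, and this also yields the $C([0,T];V')$ bound.

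Step 3 proves uniqueness. The difference of two solutions solves the homogeneous problem with zero data. Testing with $\mathcal A^{-1}$ of the difference gives $\frac{1}{2}\frac{\dd}{\dd t}\|\cdot\|_{V'}^2+\kappa_m\|\nabla\mathcal A^{-1/2}\cdot\|^2\le 0$, and hence the difference is zero. The $C^{1,1}$ generalization is only used through $D(\mathcal A)\hookrightarrow H^2(\Omega)\hookrightarrow C(\bar\Omega)$, which is already encoded in Proposition~\ref{prop:B-bounded-DA-dual}.
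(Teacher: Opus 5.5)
Your proof is correct, but it does not follow the paper. The paper gives no argument of its own. It defers to the a-priori estimate of \cite{KunRodWal24-cocv} in the shifted triple $H\subset V'\subset D(\mathcal A)'$, and points to the appendix for the passage from box/convex to $C^{1,1}$ domains. The only appendix ingredient that matters for well-posedness is the $H^2$-regularity of Lemma~\ref{lem:app-graphnorm-H2}, i.e.\ $\delta_{x}\in D(\mathcal A)'$, exactly as you note.

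You instead prove the result directly. You lift by $\mathcal A^{-1}$ and separate the free evolution from the forced part. You obtain maximal $L^2$-regularity modewise in the Neumann eigenbasis, continuity in time from the Lions--Magenes trace theorem, and uniqueness by testing with $\mathcal A^{-1}e$.

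The gain is that your computation pins down the admissible data space, and your diagnosis is right. With $V:=D(\mathcal A)$, as written in the abstract and introduction, the statement is false. For $y_0=\delta_{x_0}\in D(\mathcal A)'$ in three dimensions one has $\|S(t)y_0\|_H^2\sim c\,t^{-3/2}$ as $t\downarrow 0$. Moreover, since $W(0,T;H,D(\mathcal A)')\hookrightarrow C([0,T];D(\mathcal A^{1/2})')$, no solution in that class can even start from $\delta_{x_0}$. With $V=D(\mathcal A^{1/2})=H^1(\Omega)$, everything holds. This is the convention the paper itself uses in the appendix, and it is the reading under which $\xi_M^+\to+\infty$ in Assumption~\ref{assump:KRW-act} is possible at all. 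Under that convention the identity in your first paragraph becomes $\|y\|_{V'}=\|z\|_{D(\mathcal A^{1/2})}$. The lifted datum then lies exactly in the trace space of $L^2(0,T;D(\mathcal A))\cap H^1(0,T;H)$, so the trace-space obstruction you describe disappears.

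What the cited framework buys in exchange is generality: it handles time-dependent, non-self-adjoint lower-order terms, and it is the same setting as the stabilization theorem. Your diagonalization uses only that $A_0$ is self-adjoint with compact resolvent, which is all this proposition needs.
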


\begin{proof}
We refer to \cite{KunRodWal24-cocv}
for a detailed proof strategy in the same distribution-actuator setting.
Estimate \eqref{eq:wp-estimates} follows from the corresponding a-priori estimate. Such estimates are established under few assumptions on the domain $\Omega$, in particular it was assumed that $\Omega$ has a cubic form (or eventually convex). Such assumption is removed and replaced by a $C^{1, 1}$-regularity, see Appendix \ref{subsec:app-KRW-act-C11}. 
\end{proof}

\section{Feedback design and stability analysis}
\label{sec:feedback}
\noindent
In this section we design an output-based feedback law for the heat equation
with point actuators and show a stabilization / tracking result.
\newline
We work with the abstract system
\begin{equation}
\label{eq:feedback-abstract-system}
\dot y(t) = A_0 y(t) + B u(t),\qquad y(0)=y_0\in H,
\end{equation}
where $\Omega\subset\mathbb{R}^3$ is a bounded $C^2$ domain, In Sections~\ref{sec:feedback}--\ref{sec:coupling} we work in dimension $d=3$ and assume $\Omega$ is $C^2$ to match the thermo--plasmonic coupling; the purely abstract actuator verification in Appendix~\ref{subsec:app-KRW-act-C11} actually only requires $C^{1,1}$ regularity and applies in $d\in\{1,2,3\}$.
 $A_0$ is the
Neumann diffusion operator defined in \eqref{eq:3.2}, and $B:\mathbb{R}^M\to D(\mathcal A)'$
is the point-actuator operator defined in \eqref{eq:3.3}. We recall that $A_0$
generates an analytic contraction semigroup $(S(t))_{t\ge0}$ on $H$ and that
$B\in\mathcal{L}(\mathbb{R}^M,D(\mathcal A)')$
(Proposition~\ref{prop:B-bounded-DA-dual}).

\subsection{Closed-loop well-posedness}
\noindent
We start with the observation operator used in the feedback law.

\begin{definition}[Observation operator]
\label{def:obs-C-full}
Since $1\in\rho(A_0)$, the resolvent $\mathcal A^{-1}=(I-A_0)^{-1}$ defines a bounded operator
$H\to D(A_0)\subset H^2(\Omega)\hookrightarrow C(\overline\Omega)$. We define
\begin{equation}
\label{eq:def-C-full}
C:H\to\mathbb{R}^M,\qquad
(Cz)_j := \big(\mathcal A^{-1}z\big)(x_j),\quad j=1,\dots,M.
\end{equation}
Thus $C$ maps the state $z$ to the values of the regularized temperature
$\mathcal A^{-1}z$ at the actuator positions $x_j$.
\end{definition}
\noindent
We state
the closed-loop equation
\begin{equation}
\label{eq:closed-loop}
\dot y(t) = A_0 y(t) - \lambda B K C (y(t)-y_r),
\end{equation}
or, in terms of $z(t):=y(t)-y_r$,
\begin{equation}
\label{eq:closed-loop-z}
\dot z(t) = (A_0 - \lambda BKC) z(t),\qquad z(0)=y_0-y_r.
\end{equation}

\begin{remark}[Closed-loop well-posedness in the $V'$ framework]
\label{thm:closed-loop-wp}
For any $T>0$, $y_0\in V'$ and $u\in L^2(0,T;\R^M)$, the forced heat equation
\eqref{eq:heat-DAdual} admits a unique solution $y\in W(0,T;H,D(\mathcal A)')$ and satisfies the
estimate \eqref{eq:wp-estimates}; see Proposition~\ref{prop:wp-Dirac}.
Moreover, the feedback closed loop \eqref{eq:closed-loop-z} is well posed in $V'$.
\end{remark}
\noindent
For a given integer $M\in\mathbb{N}$ we introduce the low-mode subspace and
its projection.

\begin{definition}[Finite-dimensional subspace and projection]
\label{def:HM-PM-4}
For each $M\in\mathbb{N}$, set
\[
H_M := \mathrm{span}\{\varphi_1,\dots,\varphi_M\}\subset H,
\]
and let $P_M:H\to H_M$ be the orthogonal projection onto $H_M$. We denote
by $H_M^\perp$ the orthogonal complement of $H_M$ in $H$ and by
$I-P_M:H\to H_M^\perp$ the projection onto the tail.
\end{definition}
\noindent
We now write $C$ in the eigenbasis and separate the contributions of the low
and high modes.

\begin{lemma}[Matrix representation of $C$ and low/high decomposition]
\label{lem:C-matrix}
Let $z=\sum_{k\ge1}\alpha_k\varphi_k\in H$. Then
\[
\mathcal A^{-1}z = \sum_{k\ge1}\frac{\alpha_k}{1+\lambda_k}\,\varphi_k,
\]
and, for each $j=1,\dots,M$,
\[
(Cz)_j = \sum_{k\ge1} d_{jk}\,\alpha_k,\qquad
d_{jk} := \frac{1}{1+\lambda_k}\,\varphi_k(x_j).
\]
Define the (infinite) matrix $D=(d_{jk})_{1\le j\le M,\ k\ge1}$. Then
\[
Cz = D\alpha,\qquad \alpha:=(\alpha_1,\alpha_2,\dots)^\top\in\ell^2.
\]
For the fixed $M$ (number of actuators and low modes), decompose
$D=(D_{MM}\;|\;D_{M>})$ with
\[
D_{MM} := (d_{jk})_{j,k=1}^M\in\mathbb{R}^{M\times M},\qquad
D_{M>} := (d_{jk})_{1\le j\le M,\ k>M}.
\]
Writing $\alpha=(\alpha_M,\alpha_>)$ with
\[
\alpha_M := (\alpha_1,\dots,\alpha_M)^\top,\qquad
\alpha_> := (\alpha_{M+1},\alpha_{M+2},\dots)^\top,
\]
we obtain
\begin{equation}
\label{eq:Cz-split}
Cz = D_{MM}\alpha_M + D_{M>}\alpha_>.
\end{equation}
\end{lemma}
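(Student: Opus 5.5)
The plan is to diagonalize $\mathcal A^{-1}$ in the Neumann eigenbasis and then justify evaluating the resulting series pointwise at the actuator locations. Here $\{\varphi_k\}_{k\ge1}$ denotes the $L^2(\Omega)$-orthonormal eigenbasis of $-A_0$, with $-A_0\varphi_k=\lambda_k\varphi_k$ and $0=\lambda_1<\lambda_2\le\cdots\to\infty$. Each $\varphi_k$ lies in $D(A_0)\subset H^2(\Omega)\hookrightarrow C(\overline\Omega)$, so the values $\varphi_k(x_j)$ are well defined. Since $\mathcal A\varphi_k=(1+\lambda_k)\varphi_k$, we have $\mathcal A^{-1}\varphi_k=(1+\lambda_k)^{-1}\varphi_k$. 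The operator $\mathcal A^{-1}$ is bounded on $H$, so for $z=\sum_k\alpha_k\varphi_k$ (convergent in $H$) we get $\mathcal A^{-1}z=\sum_k\frac{\alpha_k}{1+\lambda_k}\varphi_k$, with convergence in $H$. This is the first identity.

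Next, I will upgrade the convergence of this series from $H$ to $D(\mathcal A)$. Let $S_n=\sum_{k\le n}\frac{\alpha_k}{1+\lambda_k}\varphi_k$. Then $\mathcal A S_n=\sum_{k\le n}\alpha_k\varphi_k\to z$ in $H$. Because $\|\cdot\|_{D(\mathcal A)}=\|\mathcal A\,\cdot\|_H$ is the graph norm, this gives $S_n\to\mathcal A^{-1}z$ in $D(\mathcal A)$. By the embedding $D(\mathcal A)\hookrightarrow H^2(\Omega)\hookrightarrow C(\overline\Omega)$ (valid for $d\le3$, as already used in Proposition~\ref{prop:B-bounded-DA-dual}), the convergence is in fact uniform on $\overline\Omega$. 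Point evaluation at $x_j$ is a continuous functional on $D(\mathcal A)$, so
\[
(Cz)_j=(\mathcal A^{-1}z)(x_j)=\lim_{n\to\infty}S_n(x_j)=\sum_{k\ge1}\frac{\varphi_k(x_j)}{1+\lambda_k}\,\alpha_k=\sum_{k\ge1}d_{jk}\alpha_k.
\]

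To make $D\alpha$ meaningful as a bounded map on $\ell^2$, I will check that each row $(d_{jk})_k$ lies in $\ell^2$. The point-evaluation functional $\delta_{x_j}$ belongs to $D(\mathcal A)'$, and it is represented in $H$ through $\mathcal A^{-1}$. Concretely, the function $g_j\in H$ with $\langle g_j,z\rangle_H=(\mathcal A^{-1}z)(x_j)$ for all $z\in H$ exists by the Riesz representation theorem, since $z\mapsto(\mathcal A^{-1}z)(x_j)$ is bounded on $H$. Its Fourier coefficients are $\langle g_j,\varphi_k\rangle=(\mathcal A^{-1}\varphi_k)(x_j)=d_{jk}$. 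By Parseval, $\sum_k d_{jk}^2=\|g_j\|_H^2<\infty$. Hence each row of $D$ is in $\ell^2$, $D:\ell^2\to\mathbb{R}^M$ is bounded, and $Cz=D\alpha$, where the series converges absolutely by Cauchy--Schwarz.

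Finally, I will obtain the splitting \eqref{eq:Cz-split} by separating the absolutely convergent sum $\sum_k d_{jk}\alpha_k$ into the part with $k\le M$ and the part with $k>M$. Both pieces converge, since each is a subseries of an absolutely convergent series.

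The only step that needs care is the middle one: exchanging pointwise evaluation with the infinite eigenfunction sum. I will handle it through graph-norm convergence in $D(\mathcal A)$ together with the Sobolev embedding, rather than through any pointwise bounds on the eigenfunctions $\varphi_k$.
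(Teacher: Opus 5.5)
Your proof is correct and follows the route the paper takes implicitly. The paper gives no separate proof of this lemma, but in the proof of Lemma~\ref{lem:tail-small-compact} it identifies $C$ with the matrix $D$ through the same factorization you use: the isometry $\ell^2\cong H$, the bounded resolvent $\mathcal A^{-1}:H\to D(\mathcal A)\subset H^2(\Omega)\hookrightarrow C(\overline\Omega)$, and continuity of point evaluation. Your Riesz--Parseval argument that each row $(d_{jk})_{k\ge1}$ lies in $\ell^2$, so that $D\alpha$ and the split \eqref{eq:Cz-split} make sense for every $\alpha\in\ell^2$, is a detail the paper leaves unstated.
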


\begin{remark}
\label{rem:DMM-invertible}
The matrix
\[
\mathcal{B} := (\varphi_k(x_j))_{j,k=1}^M
\]
appearing in Assumption~\ref{ass:rank-4} below is related to $D_{MM}$ by
\[
D_{MM} = \mathcal{B}\,\mathrm{diag}\big((1+\lambda_k)^{-1}\big)_{k=1}^M.
\]
Hence if $\mathcal{B}$ is invertible then $D_{MM}$ is also invertible.
\end{remark}
\noindent
We use the compactness of the resolvent and the embedding $H^2(\Omega)\hookrightarrow C(\overline\Omega)$ to show that the contribution of the high modes to the measurement $Cz$ can be made arbitrarily small by choosing $N$ large enough.

\begin{lemma}[Compactness of the measurement operator and small tail]
\label{lem:tail-small-compact}
Let $\Omega\subset\mathbb{R}^3$ be a bounded $C^2$ domain, $A_0$ the Neumann
Laplacian as in \eqref{eq:3.2}, and $\{\varphi_k\}_{k\ge1}$ an
$L^2(\Omega)$–orthonormal basis of eigenfunctions of $-A_0$ with eigenvalues
$0=\lambda_1\le\lambda_2\le\cdots$. Let $x_1,\dots,x_M\in\Omega$ be fixed
distinct points, and define $C$ as in \eqref{eq:def-C-full}.
\newline
For $\alpha=(\alpha_k)_{k\ge1}\in\ell^2$, set $z=\sum_{k\ge1}\alpha_k\varphi_k$
and write $Cz=D\alpha$ with $D=(d_{jk})_{j,k}$ as in
Lemma~\ref{lem:C-matrix}. For each $N\in\mathbb{N}$, decompose
$D=(D_{MN}\mid D_{M> N})$ where
\[
D_{MN} := (d_{jk})_{1\le j\le M,\ 1\le k\le N}\in\mathbb{R}^{M\times N},\quad
D_{M> N} := (d_{jk})_{1\le j\le M,\ k>N}.
\]
\noindent
Then the operator $T:\ell^2\to\mathbb{R}^M$ defined by $T\alpha:=D\alpha$ is
compact. In particular,
\begin{equation}
\label{eq:Dtail-norm->0}
\|D_{M> N}\|_{\mathcal{L}(\ell^2,\mathbb{R}^M)} \xrightarrow[N\to\infty]{} 0.
\end{equation}
Equivalently, for every $\varepsilon>0$ there exists $N_\varepsilon\in\mathbb{N}$
such that for all $N\ge N_\varepsilon$ and all
$\alpha=(\alpha_k)_{k\ge1}\in\ell^2$,
\[
\big\|D_{M> N}\alpha_{>N}\big\|_{\mathbb{R}^M}
= \big\|C\Big( \sum_{k>N}\alpha_k\varphi_k \Big)\big\|_{\mathbb{R}^M}
\le \varepsilon\,\|\alpha\|_{\ell^2},
\]
where $\alpha_{>N} := (\alpha_{N+1},\alpha_{N+2},\dots)$.
In particular, by choosing the truncation index $N$ sufficiently large, the
measurement of the high-frequency tail can be made arbitrarily small.
\end{lemma}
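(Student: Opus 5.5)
Since $T$ maps into the finite-dimensional space $\mathbb{R}^M$, compactness of $T$ reduces to boundedness: every bounded finite-rank operator is compact. So I will show $T\in\mathcal L(\ell^2,\mathbb{R}^M)$, and then prove the tail statement \eqref{eq:Dtail-norm->0} directly, which is the substantive part. Through the isometry $\alpha\mapsto z=\sum_k\alpha_k\varphi_k$ from $\ell^2$ onto $H$, we have $T\alpha=Cz$ with $(Cz)_j=(\mathcal A^{-1}z)(x_j)$. The map $\mathcal A^{-1}:H\to D(\mathcal A)$ is bounded. The embedding $D(\mathcal A)\hookrightarrow H^2(\Omega)\hookrightarrow C(\overline\Omega)$ holds in $d=3$ for $C^2$ (indeed $C^{1,1}$) domains by elliptic regularity and Sobolev embedding, as already used in Proposition~\ref{prop:B-bounded-DA-dual}. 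Point evaluation is bounded on $C(\overline\Omega)$. Together these give $|T\alpha|_{\mathbb{R}^M}\le C\|\alpha\|_{\ell^2}$, hence compactness.

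For the tail, I will identify each row of $D$ as an $\ell^2$ sequence. The $j$-th component is
\[
(T\alpha)_j=\langle \delta_{x_j},\mathcal A^{-1}z\rangle=(g_j,z)_H,\qquad g_j:=\mathcal A^{-1}\delta_{x_j},
\]
interpreted by duality. Here $\delta_{x_j}\in D(\mathcal A)'$, and $\mathcal A^{-1}$ is self-adjoint, extending to an isomorphism $D(\mathcal A)'\to H$, so $g_j\in H$. Its coefficients are
\[
(g_j,\varphi_k)_H=\frac{\varphi_k(x_j)}{1+\lambda_k}=d_{jk}.
\]
Hence each row satisfies $(d_{jk})_k\in\ell^2$ with $\sum_k d_{jk}^2=\|g_j\|_H^2<\infty$. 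Equivalently, this is $\sup_{\|\alpha\|\le1}|(T\alpha)_j|^2$ computed by Riesz representation, which avoids extension arguments: the functional $z\mapsto(\mathcal A^{-1}z)(x_j)$ is bounded on $H$, so it has a Riesz representer $g_j\in H$ whose coefficients are exactly $d_{jk}$.

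Now bound the tail operator in Hilbert--Schmidt norm:
\[
\|D_{M>N}\|^2_{\mathcal L(\ell^2,\mathbb{R}^M)}\le \sum_{j=1}^M\sum_{k>N} d_{jk}^2 .
\]
Each inner sum is the tail of a convergent series, so it tends to $0$ as $N\to\infty$. Since $M$ is fixed, the whole sum tends to $0$, which gives \eqref{eq:Dtail-norm->0}.

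For the equivalent formulation, note that $D_{M>N}\alpha_{>N}=C\big(\sum_{k>N}\alpha_k\varphi_k\big)$ by linearity and continuity of $C$ on $H$. Then
\[
|D_{M>N}\alpha_{>N}|\le\|D_{M>N}\|\,\|\alpha_{>N}\|\le\varepsilon\|\alpha\|_{\ell^2}
\]
once $N\ge N_\varepsilon$.

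The only delicate step is justifying $D(\mathcal A)\hookrightarrow C(\overline\Omega)$, which gives boundedness of the point functional on $H$. This relies on $H^2$ elliptic regularity for the Neumann problem on a $C^2$ domain together with $d\le 3$. The rest is routine.
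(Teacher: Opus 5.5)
Your proof is correct, but it takes a different route from the paper's.

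\textbf{The paper's route.} It factors $T=E\circ\mathcal A^{-1}\circ J$, where $J:\ell^2\to H$ is the coefficient isometry and $E$ is point evaluation. Compactness of $T$ comes from the \emph{compact} embedding $H^2(\Omega)\hookrightarrow C(\overline\Omega)$. The paper then invokes the abstract fact that $\|T-TP_N\|\to0$ for compact $T$. That fact holds because $\|T(I-P_N)\|=\|(I-P_N)T^*\|$ and $I-P_N\to0$ strongly, but the paper does not spell this out.

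\textbf{Your route.} You note that compactness is automatic, since the range is $\mathbb{R}^M$, so only the continuous embedding $D(\mathcal A)\hookrightarrow C(\overline\Omega)$ is needed. You then prove the tail statement directly. The $j$-th row of $D$ is the coefficient sequence of the Riesz representer $g_j=\mathcal A^{-1}\delta_{x_j}\in H$, which gives the Hilbert--Schmidt bound $\|D_{M>N}\|^2\le\sum_{j=1}^M\|(I-P_N)g_j\|_H^2$.

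\textbf{What each buys.} The paper's argument is purely qualitative. Yours is more elementary and self-contained. It is also quantitative: the tail norm is controlled by the spectral tails of the Neumann Green's functions of $\mathcal A$ with poles at the $x_j$. Any Sobolev regularity of $g_j$ combined with Weyl-type growth of $\lambda_k$ therefore yields an explicit rate in $N$; for instance, $g_j\in H^s(\Omega)$ for every $s<1/2$ when $d=3$.
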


\begin{proof}
Define
\[
J:\ell^2 \to H,\qquad J\alpha := \sum_{k\ge1}\alpha_k\varphi_k.
\]
Since $\{\varphi_k\}$ is an orthonormal basis of $H=L^2(\Omega)$, $J$ is an
isometric isomorphism from $\ell^2$ onto $H$.
\noindent
Let $R:=\mathcal A^{-1}:H\to D(A_0)\subset H^2(\Omega)$. Since
$H^2(\Omega)$ embeds continuously and compactly into $C(\overline\Omega)$,
then the composite operator
\[
R_J := R\circ J:\ell^2\to C(\overline\Omega)
\]
is compact. The evaluation map
\[
E: C(\overline\Omega)\to\mathbb{R}^M,\qquad
(Ew)_j := w(x_j),\quad j=1,\dots,M,
\]
is bounded. Hence the measurement operator
\[
T := E\circ R_J:\ell^2\to\mathbb{R}^M
\]
is compact as a composition of a compact and a bounded operator.
\newline
By construction, for $\alpha\in\ell^2$ and
$z=J\alpha=\sum_{k\ge1}\alpha_k\varphi_k$,
\[
T\alpha = E(RJ\alpha) = C z = D\alpha,
\]
so $T$ is represented by the matrix $D$ in the basis $\{\varphi_k\}$.
\newline
Let $P_N:\ell^2\to\ell^2$ be the orthogonal projection onto the first $N$
coordinates:
\[
P_N\alpha := (\alpha_1,\dots,\alpha_N,0,0,\dots).
\]
Set $T_N := T P_N$, which has finite rank. Since $T$ is compact, we have
\[
\|T - T_N\|_{\mathcal{L}(\ell^2,\mathbb{R}^M)}
\xrightarrow[N\to\infty]{} 0.
\]
On the other hand,
\[
(T-T_N)\alpha = T(I-P_N)\alpha,
\]
and $(I-P_N)\alpha$ has coordinates
$(0,\dots,0,\alpha_{N+1},\alpha_{N+2},\dots)$, so
\[
(T-T_N)\alpha = D_{M> N}\,\alpha_{>N}.
\]
Hence
\[
\|T-T_N\|_{\mathcal{L}(\ell^2,\mathbb{R}^M)}
= \|D_{M> N}\|_{\mathcal{L}(\ell^2,\mathbb{R}^M)},
\]
which proves \eqref{eq:Dtail-norm->0}. The last statement is just the
operator norm inequality written explicitly.
\end{proof}
\noindent
In what follows, the number $M$ denotes the number of actuators/sensors (i.e.\ the dimension
of the control and observation space $\mathbb{R}^M$). For the spectral splitting we introduce
an independent truncation index $N\in\mathbb{N}$ (typically $N\le M$). We set
\[
H_N := \mathrm{span}\{\varphi_1,\dots,\varphi_N\}
\]
and $P_N:H\to H_N$ as in Definition~\ref{def:HM-PM-4}, and we treat $(I-P_N)H$ as the tail.
Lemma~\ref{lem:tail-small-compact} shows that
\[
\|C(I-P_N)\|_{\mathcal{L}(H,\mathbb{R}^M)} \longrightarrow 0
\quad\text{as } N\to\infty;
\]
hence, for any prescribed $\delta>0$, we can choose $N$ so that
\[
\|C(I-P_N)\|_{\mathcal{L}(H,\mathbb{R}^M)} \le \delta.
\]

\subsection{Assumptions and feedback law}
\label{subsec:assumptions-4}
\noindent
We now state the structural assumptions needed for the stabilization result.

\begin{assumption}[Finite-dimensional subspace and rank condition]\label{ass:rank-4}
Fix $M\in\mathbb N$ and distinct actuator/sensor locations $x_1,\dots,x_M\in\Omega$.
Let $N\in\mathbb N$ be a truncation index (typically $N\le M$) and let $H_N$ and $P_N$
be as in Definition~\ref{def:HM-PM-4}.

\begin{enumerate}
\item[(i)] \textbf{(Rank condition on the first $N$ modes).}
The $N\times M$ matrix
\[
\Phi_{N,M}:=\big(\varphi_k(x_j)\big)_{\substack{1\le k\le N\\ 1\le j\le M}}
\]
has full row rank $N$.
Equivalently, the $M\times N$ matrix
\[
D_{M,N}:=\Big(\frac{\varphi_k(x_j)}{1+\lambda_k}\Big)_{\substack{1\le j\le M\\ 1\le k\le N}}
\]
has full column rank $N$.

\item[(ii)] \textbf{(Reference in $H_N$).}
The reference profile satisfies $y_r\in H_N$, i.e.
\[
y_r=\sum_{k=1}^N \alpha_k^r\,\varphi_k
\quad\text{for some }\alpha_k^r\in\mathbb R.
\]
\end{enumerate}
\end{assumption}
\noindent
Assumption~\ref{ass:rank-4}(i) is a controllability-type condition on the
configuration of actuator positions $x_j$ with respect to the first $N$
eigenfunctions $\varphi_k$. As discussed in the appendix (see the genericity and 1D/3D
examples), this condition is generic and can be enforced explicitly in simple
geometries.
\newline
The feedback law uses the full observation operator $C$ as defined in
\eqref{eq:def-C-full}.

\begin{definition}[Feedback law]
\label{def:feedback-law-4}
Let $K\in\mathcal{L}(\mathbb{R}^M,\mathbb{R}^M)$ be a feedback gain. We
consider the static output feedback
\begin{equation}
\label{eq:feedback-fullC}
u(t) = -K\,C\big(y(t)-y_r\big).
\end{equation}
The closed-loop system for $y$ is then
\begin{equation}
\label{eq:closed-loop-y-4}
\dot y(t) = A_0 y(t) - B K C\big(y(t)-y_r\big).
\end{equation}
For the error variable $z(t):=y(t)-y_r$ we have
\begin{equation}
\label{eq:closed-loop-z-4}
\dot z(t) = A_0 z(t) - B K C z(t),\qquad z(0)=z_0:=y_0-y_r.
\end{equation}
\end{definition}
\noindent
Well-posedness of \eqref{eq:closed-loop-z-4} follows as in
Proposition~\ref{prop:wp-Dirac}, since $BKC\in\mathcal{L}(H,H_{-1})$.


\begin{remark}[Finite-dimensional constructions versus stabilization]
\label{rem:finite-vs-krw}
A matrix-gain construction based on truncation to the first $N$ eigenmodes can be used as a
\emph{heuristic} guideline for choosing actuator locations and tuning feedback parameters.
However, for Dirac actuators the rigorous closed-loop well-posedness and stabilization/tracking
results are most naturally formulated in the $V'$ framework. In this paper, the main stability
statement is Theorem~\ref{thm:KRW-Vprime}, which yields exponential decay in $V'$ for the
feedback \eqref{eq:KRW-feedback} under Assumption~\ref{assump:KRW-act}, and implies $H$--tracking
after any positive time $t_0>0$ (see Corollary~\ref{cor:L2-after-t0} and Section~\ref{sec:robust-Vprime}).
\end{remark}

\section{Dirac-actuator stabilization and tracking in $V'$}
\label{sec:stab-KRW}
\noindent
We rewrite the tracking problem in terms of the error variable
\[
z(t):=y(t)-y_r.
\]
Throughout this section we allow non-constant reference profiles by assuming that $y_r$ is a finite
linear combination of Neumann eigenfunctions. More precisely, for some $N\in\N$,
\begin{equation}
\label{eq:ref-finite-modes}
y_r\in X_N:=\mathrm{span}\{\varphi_1,\dots,\varphi_N\},\qquad
y_r=\sum_{k=1}^N \alpha_k^r\,\varphi_k.
\end{equation}
Since such a $y_r$ is not, in general, an equilibrium of the free heat dynamics, we introduce a constant
feedforward input $u_r\in\R^M$ chosen so that the forcing $A_0y_r+Bu_r$ is eliminated at the level of
the first $N$ modes:
\begin{equation}
\label{eq:feedforward-moment}
\langle A_0 y_r + Bu_r,\varphi_k\rangle_{V',V}=0,\qquad k=1,\dots,N.
\end{equation}
Equivalently,
\begin{equation}
\label{eq:feedforward-linear-system}
\sum_{j=1}^M (u_r)_j\,\varphi_k(x_j) = -\langle A_0 y_r,\varphi_k\rangle_H,\qquad k=1,\dots,N.
\end{equation}
We assume that the $N\times M$ matrix $(\varphi_k(x_j))_{k=1,\dots,N;\ j=1,\dots,M}$ has full row rank,
so that \eqref{eq:feedforward-linear-system} is solvable; when $M>N$ we take $u_r$ to be the minimum-norm
solution of this system.

\subsection{Feedback law}
\noindent
Let $\mathcal A=I-A_0$ and $d_j=\delta_{x_j}\in D(\mathcal A)'$ as in Section~\ref{subsec:wp-Dirac-KRW}.
Define the (elliptically smoothed) point measurements
\[
\big(Cz\big)_j := \langle d_j,\mathcal A^{-1}z\rangle_{D(\mathcal A)',D(\mathcal A)}
= \big(\mathcal A^{-1}z\big)(x_j),\qquad j=1,\dots,M.
\]
We consider the feedback
\begin{equation}
\label{eq:KRW-feedback}
u_j(t) = (u_r)_j -\lambda\, (Cz(t))_j,\qquad j=1,\dots,M,
\end{equation}
with a scalar gain $\lambda>0$. Equivalently, the closed-loop error dynamics reads
\begin{equation}
\label{eq:KRW-closed-loop}
\dot z(t)=A_0 z(t) -\lambda\sum_{j=1}^M \langle d_j,\mathcal A^{-1}z(t)\rangle\, d_j + g
\quad\text{in }D(\mathcal A)',\qquad z(0)=z_0\in V',
\end{equation}
where the (time-independent) forcing term is
\begin{equation}
\label{eq:feedforward-g}
g:=A_0 y_r + B u_r\in D(\mathcal A)'.
\end{equation}
\noindent
Observe that, if we are only interested in equilibrium states, which means $A_0y_r=0$ (and $\partial_n y_r=0$ on $\partial \Omega$), then we take $u_r=0$ so that $g=0$.
\subsection{Actuator family condition}
\noindent
The stabilization mechanism requires a geometric/approximation condition on the actuators.
We adopt the abstract actuator hypothesis of \cite{KunRodWal24-cocv}
(Assumption~2.4 therein), specialized to Dirac actuators.

\begin{assumption}[Actuator hypothesis \cite{KunRodWal24-cocv}]
\label{assump:KRW-act}
There exists an increasing sequence $M\mapsto M_\sigma$ and, for each $M$, a family of
actuators $\{d_{M,j}\}_{j=1}^{M_\sigma}\subset D(\mathcal A)'$ and auxiliary functions
$\{\Psi_{M,j}\}_{j=1}^{M_\sigma}\subset D(\mathcal A)$ such that:
\begin{enumerate}
\item $\dim\mathrm{span}\{d_{M,j}\}=M_\sigma$ and $\dim\mathrm{span}\{\Psi_{M,j}\}=M_\sigma$;
\item biorthogonality: $\langle d_{M,j},\Psi_{M,i}\rangle=\delta_{ij}$;
\item the ``nullspace coercivity'' constant
\[
\xi_M^+ := \inf\Big\{\frac{\|w\|_{D(\mathcal A)}^2}{\|w\|_{V}^2}\,:\,
w\in D(\mathcal A)\setminus\{0\},\ \langle d_{M,j},w\rangle=0\ \forall j\Big\}
\]
satisfies $\xi_M^+\to+\infty$ as $M\to\infty$.
\end{enumerate}
\end{assumption}

\begin{remark}
In box (or more general polyhedral domains) domains $\Omega$ and for appropriately distributed actuator points, we can follow~\cite{KunRodWal24-cocv}
to verify Assumption~\ref{assump:KRW-act} (Section~5 therein) using self-similar domain decompositions.
In Appendix~\ref{subsec:app-KRW-act-C11} we show that, if $\Omega$ is a bounded $C^{1,1}$ domain (hence in particular
for the $C^2$ domains assumed in the main text), then Assumption~\ref{assump:KRW-act} can be verified for Dirac actuators
placed at the vertices of any shape-regular quasi-uniform triangulation of $\Omega$. In particular,
the nullspace coercivity constant satisfies $\xi_M^+\gtrsim h_M^{-2}\to\infty$ as the mesh size $h_M\downarrow 0$ (equivalently
$\xi_M^+\gtrsim M_\sigma^{2/d}$).
\end{remark}

\subsection{Exponential tracking in $V'$ and an $L^2$ consequence for $t\ge t_0$}

\begin{theorem}[Exponential stabilization in $V'$ and tracking]
\label{thm:KRW-Vprime}
Assume Assumption~\ref{assump:KRW-act}. For every decay rate $\mu>0$ there exist $M$ large enough
and $\lambda>0$ large enough such that the \emph{homogeneous} closed loop (i.e.\ \eqref{eq:KRW-closed-loop}
with $g=0$) generates an exponentially stable semigroup $\mathcal{S}_{\mathrm{cl}}(t)$ on $V'$ with
$\|\mathcal{S}_{\mathrm{cl}}(t)\|_{\mathcal{L}(V')} \le e^{-\mu t}$. Consequently, for the affine closed loop
\eqref{eq:KRW-closed-loop} there exists an equilibrium $z_\infty\in V'$ given by
\begin{equation}
\label{eq:z-infty}
z_\infty := \int_0^\infty \mathcal{S}_{\mathrm{cl}}(s)\,g\,\dd s,
\end{equation}
and the corresponding solution $z$ satisfies
\begin{equation}
\label{eq:Vprime-decay}
\|z(t)-z_\infty\|_{V'} \le e^{-\mu (t-s)} \|z(s)-z_\infty\|_{V'}\qquad \forall t\ge s\ge 0.
\end{equation}
Moreover, there exists $C_\mu>0$ such that
\begin{equation}
\label{eq:L2-time-bound}
\|z-z_\infty\|_{L^2((s,\infty);H)} \le C_\mu \|z(s)-z_\infty\|_{V'}\qquad \forall s\ge 0.
\end{equation}
\end{theorem}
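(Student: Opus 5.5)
Our plan is to prove the homogeneous decay by a direct energy estimate in $V'$, with the inner product $\langle z,w\rangle_{V'}:=(\mathcal A^{-1}z,\mathcal A^{-1}w)_H$ (an equivalent norm since $V=D(\mathcal A)$). Set $w:=\mathcal A^{-1}z\in V$ for a solution of \eqref{eq:KRW-closed-loop} with $g=0$. Testing the equation with $\mathcal A^{-1}z$ formally gives
\[
\tfrac12\tfrac{\dd}{\dd t}\|z\|_{V'}^2=\langle A_0z,\mathcal A^{-2}z\rangle-\lambda\sum_{j=1}^M\langle d_j,w\rangle\,\langle d_j,\mathcal A^{-1}w\rangle .
\]
Since $A_0=I-\mathcal A$, the first term equals $\|w\|_H^2-\|\mathcal A^{1/2}w\|_H^2$, i.e. $-\kappa_m\|\nabla w\|^2$. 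The feedback term does not immediately have a sign, so we will instead run the estimate in the $D(\mathcal A)'$-based pairing used in \cite{KunRodWal24-cocv}. That is, we replace the multiplier by the one for which the feedback term becomes $-\lambda\sum_j|\langle d_j,\mathcal A^{-1}z\rangle|^2=-\lambda|Cz|^2$. Taking the pivot so that $\frac12\frac{\dd}{\dd t}\|z\|^2_{V'}=(\dot z,\mathcal A^{-1}z)_{D(\mathcal A)',D(\mathcal A)}$, this gives
\[
\tfrac12\tfrac{\dd}{\dd t}\|z\|_{V'}^2\le-\|z\|_H^2+\|z\|_{V'}^2-\lambda|Cz|^2 .
\]
Here we use $\langle A_0 z,\mathcal A^{-1}z\rangle=\langle z,z\rangle-\langle\mathcal A z,\mathcal A^{-1}z\rangle$, interpreted with $z=\mathcal A w$, $w\in V$.

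The core step is a coercivity (oblique-projection) inequality. Using the biorthogonal family $\Psi_{M,j}$ of Assumption~\ref{assump:KRW-act}, decompose $w=\mathcal A^{-1}z=P w+(I-P)w$, where $Pw:=\sum_j\langle d_j,w\rangle\Psi_{M,j}$. Then $(I-P)w$ lies in the annihilator of the $d_j$, and by the definition of $\xi_M^+$ we get $\|(I-P)w\|_{D(\mathcal A)}^2\ge\xi_M^+\|(I-P)w\|_V^2$. Combining this with $\|Pw\|_{D(\mathcal A)}\le C_M|Cz|$ and Young's inequality yields
\[
\|z\|_{H}^2=\|w\|_{D(\mathcal A)}^2\ge \tfrac{\xi_M^+}{2}\|w\|_V^2-\widetilde C_M|Cz|^2 ,\qquad \|w\|_V\simeq\|z\|_{V'} .
\]
We then choose $M$ so that $\xi_M^+/2\ge 2(\mu+1)$, and afterwards $\lambda\ge\widetilde C_M$ (the order matters, since $\widetilde C_M$ grows with $M$). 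This gives $\frac{\dd}{\dd t}\|z\|_{V'}^2\le-2\mu\|z\|_{V'}^2-\|z\|_H^2$. Gronwall's inequality gives the semigroup bound $\|\mathcal S_{\mathrm{cl}}(t)\|\le e^{-\mu t}$ on $V'$. Integrating in time gives $\int_s^\infty\|z\|_H^2\le\|z(s)\|_{V'}^2$, which is \eqref{eq:L2-time-bound}. The norm-equivalence constants for $\|w\|_V\simeq\|z\|_{V'}$ will be absorbed by defining $V'$ via this pivot norm. The formal computation will be justified on Galerkin or regular solutions, using the well-posedness of Proposition~\ref{prop:wp-Dirac} with feedback (a bounded perturbation $BC\in\mathcal L(V',D(\mathcal A)')$ of the analytic generator), and then extended by density.

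The affine part is then elementary. Exponential stability makes $z_\infty=\int_0^\infty\mathcal S_{\mathrm{cl}}(s)g\,\dd s$ well defined. Here $g\in D(\mathcal A)'$, but $\mathcal S_{\mathrm{cl}}(s)g$ lies in $V'$ with an integrable singularity near $s=0$ by analyticity/smoothing ($\|\mathcal S_{\mathrm{cl}}(s)\|_{\mathcal L(D(\mathcal A)',V')}\lesssim s^{-1/2}$ near $0$ and exponentially decaying afterwards). This $z_\infty$ solves $0=A_{\mathrm{cl}}z_\infty+g$. The difference $z-z_\infty$ then solves the homogeneous closed loop, so \eqref{eq:Vprime-decay} and \eqref{eq:L2-time-bound} follow from the first part via the semigroup property.

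The main obstacle is the sign structure of the feedback term in the energy identity: we must choose the $V'$ inner product and multiplier so that the Dirac feedback produces exactly $-\lambda|Cz|^2$. Together with this, the oblique-projection bound relies crucially on the ordering $M$ first, then $\lambda$. If the pivot choice fails, we would fall back to citing the corresponding estimate of \cite{KunRodWal24-cocv} (their Theorem on oblique-projection feedback) verbatim, with Assumption~\ref{assump:KRW-act} as input.
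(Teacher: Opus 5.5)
Your argument is correct in substance, but it takes a more self-contained route than the paper. The paper obtains the homogeneous estimate only by citing \cite[Theorem~3.1]{KunRodWal24-cocv} (autonomous case, $A_{rc}\equiv0$), and then does the same variation-of-constants step you describe. You instead reprove the cited result from two ingredients: the $V'$ energy identity $\tfrac12\tfrac{\dd}{\dd t}\|z\|_{V'}^2=\|z\|_{V'}^2-\|z\|_H^2-\lambda|Cz|^2$, and the oblique-projection bound $\|z\|_H^2\ge\tfrac{\xi_M^+}{2}\|z\|_{V'}^2-\widetilde C_M|Cz|^2$ built from the biorthogonal family. This makes three things explicit: the role of $\xi_M^+\to\infty$, the order of choices ($M$ first, then $\lambda\ge\widetilde C_M$, where $\widetilde C_M$ depends on $\xi_M^+$ and $\|\Psi_{M,j}\|_{D(\mathcal A)}$), and the constant $C_\mu=1$ in \eqref{eq:L2-time-bound}. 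You also correctly note that $g=A_0y_r+Bu_r$ contains Dirac masses, so $g\notin V'$ when $d\ge2$. Convergence of \eqref{eq:z-infty} in $V'$ therefore needs parabolic smoothing, or more simply $z_\infty=-A_{\mathrm{cl}}^{-1}g\in H$; the paper's proof passes over this point. The citation, for its part, covers the nonautonomous setting and delegates well-posedness.

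Two points must be corrected.

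\emph{The choice of $V$.} Everything works only with $V=D(\mathcal A^{1/2})=H^1(\Omega)$, normed by $\|v\|_V^2=\langle\mathcal A v,v\rangle$ as in the appendix. Then $\|z\|_{V'}^2=\langle z,\mathcal A^{-1}z\rangle$ and $\|\mathcal A^{-1}z\|_V=\|z\|_{V'}$ hold with equality, which you need for the constant $1$ in $e^{-\mu t}$. With $V=D(\mathcal A)$, as in your opening line, $\xi_M^+$ stays bounded (it equals $1$ if the norms coincide) and your pivot identity fails. That is exactly why your first computation produced an unsigned feedback term.

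\emph{The feedback as a perturbation.} The claim $BC\in\mathcal L(V',D(\mathcal A)')$ is false for $d=2,3$. Since $\mathcal A^{-1}V'=H^1(\Omega)\not\hookrightarrow C(\overline\Omega)$, $Cz$ is undefined for general $z\in V'$. What does hold is that $C$ is bounded on $D(\mathcal A^\theta)'$ for small $\theta>0$ (e.g.\ $\theta<1/4$ when $d=3$). Hence $BC$ is a perturbation with relative bound zero of $A_0$, viewed as a generator on $D(\mathcal A)'$ with domain $H$. This gives an analytic closed-loop semigroup on $D(\mathcal A)'$, which is what your $s^{-1/2}$ smoothing into $V'$ actually uses. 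Its restriction to the interpolation space $V'$ is $\mathcal S_{\mathrm{cl}}$.

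No density argument in $V'$ is needed after that. Your energy identity holds directly for $z\in W(0,T;H,D(\mathcal A)')$ by the Lions--Magenes lemma with pivot $V'$. Moreover, $Cz(t)$ is defined for a.e.\ $t$ because $z(t)\in H$.
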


\begin{proof}
For the homogeneous closed loop ($g=0$), this is a direct specialization of
\cite[Theorem~3.1]{KunRodWal24-cocv} to the autonomous case
($A_{rc}\equiv 0$) with $\mathcal A=I-A_0$ and Dirac actuators, which yields
$\|\mathcal{S}_{\mathrm{cl}}(t)\|_{\mathcal{L}(V')}\le e^{-\mu t}$.
\noindent
For the affine system \eqref{eq:KRW-closed-loop}, the variation-of-constants formula gives
\[
z(t)=\mathcal{S}_{\mathrm{cl}}(t)z_0+\int_0^t \mathcal{S}_{\mathrm{cl}}(t-s)\,g\,\dd s.
\]
Since $\mathcal{S}_{\mathrm{cl}}$ is exponentially stable on $V'$, the improper integral in
\eqref{eq:z-infty} converges in $V'$ and defines an equilibrium $z_\infty$.
Moreover,
\[
z(t)-z_\infty=\mathcal{S}_{\mathrm{cl}}(t)\,(z_0-z_\infty),
\]
which implies \eqref{eq:Vprime-decay}. The $L^2$ bound \eqref{eq:L2-time-bound} follows from the
corresponding estimate applied to the homogeneous
trajectory $z-z_\infty$.
\end{proof}

\begin{corollary}[$L^2$ tracking for $t\ge t_0>0$]
\label{cor:L2-after-t0}
Let the assumptions of Theorem~\ref{thm:KRW-Vprime} hold and fix $t_0>0$.
Then there exists $C(t_0)>0$ such that
\begin{equation}
\label{eq:L2-after-t0}
\sup_{t\ge t_0}\|z(t)-z_\infty\|_{H}
\le C(t_0)\,\|z-z_\infty\|_{L^2((t_0/2,\infty);H)}.
\end{equation}
In particular, combining \eqref{eq:L2-time-bound} with \eqref{eq:L2-after-t0} (with $s=t_0/2$) yields
\[
\sup_{t\ge t_0}\|z(t)-z_\infty\|_{L^2(\Omega)}
\le C(t_0)\,C_\mu\, e^{-\mu t_0/2}\,\|z_0-z_\infty\|_{V'}.
\]
\end{corollary}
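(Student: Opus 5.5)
The plan is to use parabolic smoothing of the homogeneous closed loop on the interval $[t_0/2,\infty)$. Set $e(t):=z(t)-z_\infty$. By the proof of Theorem~\ref{thm:KRW-Vprime}, $e(t)=\mathcal S_{\mathrm{cl}}(t)e(0)$, so $e$ solves
\[
\dot e = A_0 e-\lambda BC e=:A_{\mathrm{cl}}e .
\]
The strategy is to establish a local-in-time smoothing estimate. Concretely, for any $s\ge 0$ and any window $[s,s+\tau]$ with $\tau=t_0/2$, we want
\[
\sup_{t\in[s+\tau,\,s+2\tau]}\|e(t)\|_H^2\le C(\tau)\int_s^{s+2\tau}\|e(r)\|_H^2\,\dd r .
\]
Taking $s$ ranging over $t_0/2+k\tau$, $k\ge 0$, covers $[t_0,\infty)$, and each window lies inside $(t_0/2,\infty)$. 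This gives \eqref{eq:L2-after-t0}. The displayed consequence then follows by applying \eqref{eq:L2-time-bound} with $s=t_0/2$ and then \eqref{eq:Vprime-decay}, which gives $\|e(t_0/2)\|_{V'}\le e^{-\mu t_0/2}\|e(0)\|_{V'}$.

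To get the window estimate I would use a time-weighted energy estimate in $H$. Fix a cutoff $\theta(t)=(t-s)_+/\tau$ on $[s,s+\tau]$, with $\theta=1$ afterwards. Test the equation with $\theta e$ in $H$, formally first and then justified by Galerkin approximation or by regularization of the initial datum. This gives
\[
\tfrac12\frac{\dd}{\dd t}\bigl(\theta\|e\|_H^2\bigr)+\theta\kappa_m\|\nabla e\|_H^2
=\tfrac12\theta'\|e\|_H^2-\lambda\theta\sum_j (\mathcal A^{-1}e)(x_j)\, e(x_j).
\]

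The main obstacle is the Dirac feedback term, since $e(x_j)$ is not controlled by the $H^1$ norm in $d=3$. Two routes are available.

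\emph{Route (a).} Work one level up: apply $\mathcal A^{-1/2}$ and do the estimate in $V'$ versus $H$, or go from $L^2(H)$ to $L^\infty(H)$ via the $D(\mathcal A)'$ structure. Test with $\theta\mathcal A^{-1}e$ to control $\sup\|e\|_{V'}$, and test with $\theta\, e$ interpreted through $W(0,T;H,D(\mathcal A)')$.

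\emph{Route (b), preferred.} Use the mild formulation. Since $\mathcal A^{-1}e\in L^2(H^2)\subset L^2(C(\bar\Omega))$, the output $v(t):=Ce(t)$ lies in $L^2$ on the window, with $\|v\|_{L^2}\lesssim\|e\|_{L^2(H)}$. Then $e$ solves the open-loop problem $\dot e=A_0e+Bw$ with $w=-\lambda v$. By Duhamel from a time $r\in[s,s+\tau]$,
\[
e(t)=S(t-r)e(r)-\lambda\int_r^t S(t-\sigma)Bv(\sigma)\,\dd\sigma .
\]
The first term satisfies $\|S(t-r)e(r)\|_H\le\|e(r)\|_H$. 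For the second, use the analytic semigroup bound
\[
\|S(t)\delta_x\|_H\lesssim t^{-3/4}\quad\text{for small }t,
\]
a heat-kernel estimate (as in Lemma~\ref{lem:R3-to-Omega-restriction}). This power is not square-integrable near $0$, so the direct $L^2$ estimate fails in $d=3$. I would therefore bootstrap. First use the $D(\mathcal A)'$-valued forcing to get $e\in L^\infty(\tau,2\tau;H^{-\epsilon})$-type regularity, which improves the output. Indeed $C$ is bounded from $D(\mathcal A^{-\gamma})$ for $\gamma<1/4$, so $v\in L^\infty$. Then Young's inequality with $t^{-3/4}\in L^1$ gives $e\in L^\infty(H)$.

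Averaging over $r\in[s,s+\tau]$ converts $\|e(r)\|_H$ into the $L^2(s,s+\tau;H)$ norm. The resulting constant $C(t_0)$ depends only on $t_0$, $\lambda$, $M$, $\Omega$ and $\kappa_m$. The delicate bookkeeping is making sure the bootstrap constants depend only on the window length, not on $s$; this holds because the system is autonomous.
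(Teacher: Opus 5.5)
Your architecture is sound and is an explicit version of the smoothing argument the paper only cites (Pazy/Lunardi, or its eigenbasis hint). You treat $e=z-z_\infty$ as a mild solution of the open-loop problem with input $-\lambda Ce$, started at an intermediate time $r$. You bound $\|e(t)\|_H$ by $\|e(r)\|_H$ plus a feedback term, average over $r$, and finish with \eqref{eq:L2-time-bound} at $s=t_0/2$ and \eqref{eq:Vprime-decay}. You also correctly spot what the paper's hint glosses over: the closed loop is not diagonal in the eigenbasis, and $\|S(t)\delta_x\|_H\sim t^{-3/4}$ is not square integrable in $d=3$.

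The gap is in the bootstrap you use to get around this. To conclude $v=Ce\in L^\infty$ you need $e\in L^\infty(D(\mathcal A^{-\gamma}))$ with $\gamma<1/4$ on the window, controlled by $\|e\|_{L^2(H)}$. Neither available tool delivers this:
\begin{itemize}
\item The $W(0,T;H,D(\mathcal A)')$ structure only gives $C([0,T];V')$, i.e.\ $\gamma=1/2$ at best.
\item The Duhamel improvement from $v\in L^2$ is exactly critical. In $d=3$ one has $\|S(s)\delta_x\|_{D(\mathcal A^{-\gamma})}\sim s^{-(3/4-\gamma)}$, which is square integrable near $0$ only for $\gamma>1/4$. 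That is precisely the range where $C$ is unbounded, since $\mathcal A^{-1}\delta_x\sim |\cdot-x|^{-1}\notin H^{1/2}$.
\end{itemize}
So the step ``$e\in L^\infty(H^{-\epsilon})$, hence $v\in L^\infty$'' does not follow as written. Route (a) does not close either: testing with $\theta\mathcal A^{-1}e$ only controls $V'$-norms, and the $H$-energy identity contains $e(x_j)$, which is undefined for $e\in H^1$ in 3D.

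The simplest repair is not to pass through $\|v\|_{L^2}$ at all. Since $C\in\mathcal L(H,\R^M)$, you have $|v(\sigma)|\le c\|e(\sigma)\|_H$ pointwise. Duhamel then gives, for a.e.\ $r$ and $0\le t-r\le\tau$,
\[
\|e(t)\|_H\le\|e(r)\|_H+c_\tau\lambda\int_r^t(t-\sigma)^{-3/4}\|e(\sigma)\|_H\,\dd\sigma .
\]
Because $\|e(\cdot)\|_H\in L^1_{\mathrm{loc}}$, a singular (Henry-type) Gronwall lemma yields $\|e(t)\|_H\le K(\tau)\|e(r)\|_H$, i.e.\ $\mathcal S_{\mathrm{cl}}$ is locally bounded on $H$. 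Take $\tau=t_0/2$ and $t\ge t_0$, so that $[t-\tau,t]\subset[t_0/2,\infty)$. Squaring and averaging over $r\in[t-\tau,t]$ gives $\|e(t)\|_H^2\le K(\tau)^2\tau^{-1}\|e\|_{L^2((t_0/2,\infty);H)}^2$ directly, with no window bookkeeping.

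Alternatively, your bootstrap can be saved with an intermediate $L^p$ step. Choose $\gamma<1/4$ close to $1/4$, so the kernel exponent $3/4-\gamma$ is only slightly above $1/2$. Young's inequality then turns $v\in L^2$ into $e\in L^p(D(\mathcal A^{-\gamma}))$, hence $v\in L^p$, for some large finite $p$. Once $p>4$, H\"older against $(t-\sigma)^{-3/4}\in L^{p'}$ gives the $L^\infty(H)$ bound.
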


\begin{proof}
The bound \eqref{eq:L2-after-t0} is a standard parabolic smoothing estimate; see, e.g., \cite[Ch.~4]{Pazy1983} or
\cite[Ch.~2]{Lunardi-Book}. One can also prove it by expanding in the eigenbasis of $-A_0$ and using that high modes
are exponentially damped on any interval of length $t_0/2$.
\end{proof}

\begin{remark}[What is (and is not) claimed]
\label{rem:KRW-what-claimed}
Theorem~\ref{thm:KRW-Vprime} is the main rigorous tracking statement (in $V'$).
Corollary~\ref{cor:L2-after-t0} provides an $L^2$ consequence on $[t_0,\infty)$, with a constant
depending on $t_0$. It does not claim a uniform $t_0\downarrow 0$ bound.
\end{remark}

\subsection{Fixed-point pre-compensation of the steady bias}
\label{subsec:fixed-point-bias}
\noindent
The discussion above shows that for a non-equilibrium reference profile $y_r$ the closed-loop error
$z(t):=y(t)-y_r$ converges to a (possibly nonzero) equilibrium
$z_\infty$ given by \eqref{eq:z-infty}. This subsection provides a systematic way to compensate for this
bias by adjusting the \emph{commanded} reference within the finite-dimensional space $X_N$ introduced in
\eqref{eq:ref-finite-modes}. The construction is based on a fixed-point equation.

\medskip
\noindent\textbf{The bias map.}
Fix $N\in\mathbb{N}$ and recall $X_N:=\mathrm{span}\{\varphi_1,\dots,\varphi_N\}$.
For any commanded reference $y^\star\in X_N$ we define the feedforward $u_r(y^\star)\in\mathbb{R}^M$ as a
(possibly nonunique) solution of the moment system \eqref{eq:feedforward-linear-system}--\eqref{eq:feedforward-moment}
with $y_r$ replaced by $y^\star$. In what follows we fix a \emph{linear} selection
\begin{equation}
\label{eq:UN-def}
U_N:X_N\to\mathbb{R}^M,\qquad u_r(y^\star):=U_N y^\star,
\end{equation}
for instance the minimum--Euclidean-norm choice, which is linear whenever $\Phi_{N,M}$ has full row rank.
With this choice we associate to $y^\star$ the residual (time-independent) forcing
\begin{equation}
\label{eq:g-of-ystar}
g(y^\star):=A_0 y^\star + B\,U_N y^\star \in D(\mathcal{A})'.
\end{equation}
By construction we have $P_N g(y^\star)=0$ (cancellation of the first $N$ moments), and since
$A_0y^\star\in X_N$ this equivalently reads
\begin{equation}
\label{eq:g-is-tail}
g(y^\star)=(I-P_N)\,B\,U_N y^\star .
\end{equation}
\noindent
Let $\mathcal{S}_{\mathrm{cl}}(t)$ denote the exponentially stable closed-loop semigroup associated with the
homogeneous loop on $V'$ (Theorem~\ref{thm:KRW-Vprime}). As used in \eqref{eq:z-infty}, the affine loop with
forcing $g\in D(\mathcal A)'$ admits an equilibrium $z_\infty\in V'$ given by the convergent integral
$\int_0^\infty \mathcal{S}_{\mathrm{cl}}(s)\,g\,\dd s$; see \cite{KunRodWal24-cocv}.
Accordingly, we define the (bounded) operator
\begin{equation}
\label{eq:Rcl-def}
\mathcal{R}_{\mathrm{cl}}:D(\mathcal A)'\to V',\qquad
\mathcal{R}_{\mathrm{cl}}h := \int_0^\infty \mathcal{S}_{\mathrm{cl}}(s)\,h\,\dd s,
\end{equation}
and denote by $C_{\mathrm{cl}}>0$ any constant such that
\begin{equation}
\label{eq:Rcl-bound}
\|\mathcal{R}_{\mathrm{cl}}h\|_{V'} \le C_{\mathrm{cl}}\,\|h\|_{D(\mathcal A)'}\qquad\forall h\in D(\mathcal A)'.
\end{equation}

\begin{lemma}[Linearity and a priori bound for the steady bias]
\label{lem:bias-map}
The map
\begin{equation}
\label{eq:Zmap-def}
\mathcal{Z}_N:X_N\to V',\qquad
\mathcal{Z}_N(y^\star):=\mathcal{R}_{\mathrm{cl}}\,g(y^\star),
\end{equation}
is linear and bounded. Moreover, for all $y^\star\in X_N$,
\begin{equation}
\label{eq:Zmap-bound}
\|\mathcal{Z}_N(y^\star)\|_{V'} \le C_{\mathrm{cl}}\,\|g(y^\star)\|_{D(\mathcal A)'}.
\end{equation}
\end{lemma}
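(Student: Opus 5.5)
The plan is to write $\mathcal Z_N$ as a composition of linear maps and prove that each one is bounded. Concretely,
\[
\mathcal Z_N=\mathcal R_{\mathrm{cl}}\circ G_N,\qquad G_N:X_N\to D(\mathcal A)',\quad G_N y^\star:=A_0y^\star+B\,U_Ny^\star .
\]
The statement then follows from three facts: $G_N$ is linear and bounded, $\mathcal R_{\mathrm{cl}}$ is linear and bounded from $D(\mathcal A)'$ to $V'$, and \eqref{eq:Zmap-bound} is just \eqref{eq:Rcl-bound} applied to $h=g(y^\star)$.

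\emph{Step 1: linearity and boundedness of $G_N$.}
\begin{itemize}
\item $A_0$ restricted to $X_N$ is linear, since $A_0\varphi_k=-\lambda_k\varphi_k$. It maps into $X_N\subset H\hookrightarrow D(\mathcal A)'$.
\item $U_N$ is linear by the choice made in \eqref{eq:UN-def}. For the minimum-norm selection, $U_N$ is the Moore--Penrose right inverse of the full-row-rank matrix $\Phi_{N,M}$ composed with the coordinate map $y^\star\mapsto(-\langle A_0y^\star,\varphi_k\rangle)_{k\le N}$, which is linear.
\item $B$ is linear and bounded into $D(\mathcal A)'$ by Proposition~\ref{prop:B-bounded-DA-dual}.
\end{itemize}
Hence $G_N$ is linear. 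Because $X_N$ is finite dimensional, every linear map on it is bounded, so $G_N$ is bounded. If an explicit constant is wanted, one can bound $\|A_0y^\star\|_{D(\mathcal A)'}\le \lambda_N\|y^\star\|_H$ and $\|BU_Ny^\star\|\le C_B|U_N|\,\|y^\star\|_H$.

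\emph{Step 2: $\mathcal R_{\mathrm{cl}}$ is well defined, linear and bounded.} This is the main point and is taken from Theorem~\ref{thm:KRW-Vprime} and the construction in \eqref{eq:Rcl-def}. The closed-loop semigroup satisfies $\|\mathcal S_{\mathrm{cl}}(t)\|_{\mathcal L(V')}\le e^{-\mu t}$, but $g$ lies only in $D(\mathcal A)'$, not in $V'$. So the integral $\int_0^\infty\mathcal S_{\mathrm{cl}}(s)h\,\dd s$ has to be understood through the closed-loop mild solution with constant forcing $h$.

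On each finite interval, the well-posedness estimate \eqref{eq:wp-estimates}, adapted to the closed loop (Remark~\ref{thm:closed-loop-wp}), gives $\|\int_0^1\mathcal S_{\mathrm{cl}}(s)h\,\dd s\|_{V'}\le C_1\|h\|_{D(\mathcal A)'}$. This carries over to forcing in $D(\mathcal A)'$ because the feedback term is a bounded perturbation at that level. For the tail I would use the semigroup property:
\[
\int_k^{k+1}\mathcal S_{\mathrm{cl}}(s)h\,\dd s=\mathcal S_{\mathrm{cl}}(k)\int_0^1\mathcal S_{\mathrm{cl}}(s)h\,\dd s .
\]
Summing the resulting geometric series gives $\|\mathcal R_{\mathrm{cl}}h\|_{V'}\le C_1(1-e^{-\mu})^{-1}\|h\|_{D(\mathcal A)'}=:C_{\mathrm{cl}}\|h\|_{D(\mathcal A)'}$. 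Linearity of $\mathcal R_{\mathrm{cl}}$ follows from linearity of each $\mathcal S_{\mathrm{cl}}(s)$ and of the integral.

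\emph{Step 3: conclusion.} Since $\mathcal Z_N=\mathcal R_{\mathrm{cl}}G_N$ is a composition of bounded linear maps, it is linear and bounded, and \eqref{eq:Zmap-bound} is \eqref{eq:Rcl-bound} evaluated at $h=g(y^\star)$.

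The only step that is not routine is Step 2, namely confirming that the equilibrium integral converges for $D(\mathcal A)'$-valued forcing. Since the paper already takes \eqref{eq:Rcl-bound} as the definition of $C_{\mathrm{cl}}$, the lemma can simply cite it. The geometric-series argument above is what I would supply to justify that $C_{\mathrm{cl}}$ is finite.
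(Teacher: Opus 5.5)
Your proposal is correct and is essentially the paper's proof. Linearity of $\mathcal Z_N$ follows from the linearity of $A_0$, $U_N$, $B$ and $\mathcal R_{\mathrm{cl}}$, and \eqref{eq:Zmap-bound} is just \eqref{eq:Rcl-bound} applied to $h=g(y^\star)$.

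Your Step 2 (a unit-interval estimate plus the geometric series in $e^{-\mu k}$, showing $C_{\mathrm{cl}}<\infty$ for $D(\mathcal A)'$-valued forcing) is extra justification that the paper takes for granted by citing \cite{KunRodWal24-cocv}. One correction there: the unit-interval estimate is better justified by the dissipativity identity $\langle \lambda BCz,z\rangle=\lambda|Cz|^2\ge0$ in the $V'$ inner product than by calling the feedback a bounded perturbation. The reason is that $BC\in\mathcal L(H,D(\mathcal A)')$ is of the same order as $A_0$, not lower order.
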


\begin{proof}
Linearity follows from the linearity of $U_N$ and $B$ in \eqref{eq:g-of-ystar} and the linearity of
$\mathcal{R}_{\mathrm{cl}}$ in \eqref{eq:Rcl-def}. The bound \eqref{eq:Zmap-bound} is \eqref{eq:Rcl-bound}.
\end{proof}

\begin{remark}[Achieved steady field for a commanded reference]
\label{rem:achieved-steady-field}
For a commanded reference $y^\star\in X_N$ we consider the shifted error $z:=y-y^\star$ and apply the
feedback \eqref{eq:KRW-feedback} with the corresponding feedforward $u_r(y^\star)=U_Ny^\star$.
Then the closed-loop error is affine with forcing $g(y^\star)$ and converges exponentially in $V'$ to the equilibrium
\[
z_\infty(y^\star)=\mathcal{Z}_N(y^\star),
\]
so that the temperature field converges to
\begin{equation}
\label{eq:yinfty-of-ystar}
y_\infty(y^\star):=\lim_{t\to\infty} y(t) = y^\star + \mathcal{Z}_N(y^\star)\qquad \text{in }V'.
\end{equation}
\end{remark}

\medskip
\noindent\textbf{Fixed-point pre-compensation in $X_N$.}
Given a desired target $y_r\in X_N$, we seek a commanded reference $y^\star\in X_N$ such that the achieved
steady field \eqref{eq:yinfty-of-ystar} matches $y_r$ \emph{on the low modes}. Since $X_N\subset V$ we can use the
$V'$--$V$ duality pairing to define the spectral projector $P_N$ on $V'$ and $D(\mathcal A)'$ by
\[
P_N \eta := \sum_{k=1}^N \langle \eta,\varphi_k\rangle_{D(\mathcal A)',D(\mathcal A)}\,\varphi_k,
\qquad \eta\in D(\mathcal A)'.
\]
Define the projected bias operator
\begin{equation}
\label{eq:TN-def}
T_N:X_N\to X_N,\qquad T_N y := P_N \mathcal{Z}_N(y).
\end{equation}
The compensated target is then determined by the linear fixed-point equation
\begin{equation}
\label{eq:fixed-point}
y^\star + T_N y^\star = y_r\qquad\text{in }X_N,
\end{equation}
i.e.\ $(I+T_N)y^\star=y_r$.

\begin{proposition}[Existence/uniqueness and iterative construction]
\label{prop:fixed-point}
Let $y_r\in X_N$.
\begin{enumerate}
\item If $I+T_N:X_N\to X_N$ is invertible, then \eqref{eq:fixed-point} admits a unique solution
$y^\star\in X_N$. For this choice of commanded reference one has
\begin{equation}
\label{eq:low-mode-matching}
P_N y_\infty(y^\star)=y_r.
\end{equation}
\item If, in addition, $\|T_N\|_{\mathcal{L}(X_N)}<1$ in some norm on $X_N$, then the Picard iteration
\begin{equation}
\label{eq:Picard}
y^{(0)}:=y_r,\qquad y^{(\ell+1)}:=y_r- T_N y^{(\ell)},\qquad \ell\ge 0,
\end{equation}
converges geometrically in $X_N$ to the unique solution $y^\star$ of \eqref{eq:fixed-point}.
\end{enumerate}
\end{proposition}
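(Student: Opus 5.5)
The plan is to reduce everything to finite-dimensional linear algebra on $X_N$, using only Lemma~\ref{lem:bias-map} and Remark~\ref{rem:achieved-steady-field}. First I check that $T_N$ is a well-defined linear map $X_N\to X_N$. By Lemma~\ref{lem:bias-map}, $\mathcal Z_N:X_N\to V'$ is linear and bounded. The projector $P_N$ is defined on $V'$ through the pairings $\langle\eta,\varphi_k\rangle$, and $\varphi_k\in D(\mathcal A)\subset V$, so $P_N$ is linear and bounded from $V'$ into $X_N$. Hence $T_N=P_N\mathcal Z_N$ is a linear operator on the finite-dimensional space $X_N$. It is therefore automatically bounded in any norm, and all norms on $X_N$ are equivalent.

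For part 1, invertibility of $I+T_N$ gives the unique solution $y^\star=(I+T_N)^{-1}y_r\in X_N$ of \eqref{eq:fixed-point}. To get \eqref{eq:low-mode-matching}, I apply $P_N$ to \eqref{eq:yinfty-of-ystar}. This requires checking that $P_N$ acts as the identity on $X_N$ when $X_N$ is viewed inside $V'$. For $y^\star=\sum_{i\le N}\beta_i\varphi_i$, we have $\langle y^\star,\varphi_k\rangle_{V',V}=(y^\star,\varphi_k)_H=\beta_k$ by $L^2$-orthonormality, because the embedding $H\hookrightarrow V'$ is realized via the $H$ inner product. This gives
\[
P_N y_\infty(y^\star)=P_Ny^\star+P_N\mathcal Z_N(y^\star)=y^\star+T_Ny^\star=y_r .
\]
I also note that the limit in \eqref{eq:yinfty-of-ystar} is taken in $V'$ and $P_N$ is continuous on $V'$. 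So $P_N y(t)\to y_r$ as well, and the matching is consistent with the dynamics.

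For part 2, let $q:=\|T_N\|<1$ in the chosen norm $\|\cdot\|_*$ on $X_N$. Then the map $\Theta(y):=y_r-T_Ny$ is a strict contraction of the complete space $(X_N,\|\cdot\|_*)$. Banach's fixed point theorem gives a unique fixed point, and this fixed point is exactly the solution of \eqref{eq:fixed-point}. The Neumann series also shows that $I+T_N$ is invertible, so part 1 applies. The error estimate follows from
\[
y^{(\ell+1)}-y^\star=-T_N\bigl(y^{(\ell)}-y^\star\bigr),
\]
which yields $\|y^{(\ell)}-y^\star\|_*\le q^\ell\|y_r-y^\star\|_*\le \frac{q^{\ell+1}}{1-q}\|y_r\|_*$. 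By norm equivalence on $X_N$, this geometric convergence holds in any norm.

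The only step I expect to need care is the identification of $P_N$ on $X_N\subset V'$, that is, checking that the duality pairing restricted to $H$ agrees with the $L^2$ inner product. This follows from the Gelfand-triple convention $V\subset H\subset V'$ implicit in the paper. Everything else is routine.
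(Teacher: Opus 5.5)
Your proposal is correct and follows the paper's own proof. For part 1 it solves via $(I+T_N)^{-1}$ and applies $P_N$ to $y_\infty(y^\star)=y^\star+\mathcal Z_N(y^\star)$, and for part 2 it applies Banach's fixed-point theorem to $y\mapsto y_r-T_Ny$. Your extra checks — that $P_N$ fixes $X_N$ inside $V'$ via the pivot identification, and the explicit bound $\|y^{(\ell)}-y^\star\|\le \frac{q^{\ell+1}}{1-q}\|y_r\|$ — only spell out what the paper leaves implicit.
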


\begin{proof}
(1) Since $X_N$ is finite-dimensional, invertibility of $I+T_N$ is equivalent to existence and uniqueness of the
solution $y^\star=(I+T_N)^{-1}y_r$. The identity \eqref{eq:low-mode-matching} follows from
\eqref{eq:yinfty-of-ystar} and \eqref{eq:fixed-point}:
\[
P_N y_\infty(y^\star)=P_N\bigl(y^\star+\mathcal{Z}_N(y^\star)\bigr)=y^\star+T_Ny^\star=y_r.
\]
(2) The iteration \eqref{eq:Picard} is the standard Banach fixed-point iteration for the contraction
$F(y):=y_r-T_Ny$ on $X_N$.
\end{proof}

\begin{corollary}[Exact low-mode tracking and a quantitative tail estimate]
\label{cor:fixed-point-tail}
Assume $y_r\in X_N$ and let $y^\star\in X_N$ be any solution of the fixed-point equation \eqref{eq:fixed-point}.
Define the corresponding steady reached profile by \eqref{eq:yinfty-of-ystar}.
Then
\begin{equation}
\label{eq:exact-low-mode-tracking}
P_N y_\infty(y^\star)=y_r,
\end{equation}
and the full-field mismatch is a pure tail:
\begin{equation}
\label{eq:tail-only-mismatch}
y_\infty(y^\star)-y_r = (I-P_N)\,y_\infty(y^\star)=(I-P_N)\,\mathcal{Z}_N(y^\star).
\end{equation}
In particular,
\begin{equation}
\label{eq:tail-estimate-Vprime}
\|y_\infty(y^\star)-y_r\|_{V'}
\le \|I-P_N\|\,C_{\mathrm{cl}}\,
\|(I-P_N)B\|_{\mathcal{L}(\mathbb{R}^M,D(\mathcal A)')}\,\|U_N\|\,\|y^\star\|_{X_N}.
\end{equation}
If $I+T_N$ is invertible, so that $y^\star=(I+T_N)^{-1}y_r$, then \eqref{eq:tail-estimate-Vprime} yields
\begin{equation}
\label{eq:tail-estimate-with-inverse}
\|y_\infty(y^\star)-y_r\|_{V'}
\le \|I-P_N\|\,C_{\mathrm{cl}}\,
\|(I-P_N)B\|\,\|U_N\|\,\|(I+T_N)^{-1}\|\,\|y_r\|_{X_N}.
\end{equation}
Moreover, if $\|T_N\|_{\mathcal{L}(X_N)}<1$, then $\|(I+T_N)^{-1}\|\le (1-\|T_N\|)^{-1}$.
\end{corollary}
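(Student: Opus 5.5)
The plan is to read off the exact low-mode matching from Proposition~\ref{prop:fixed-point}, then obtain the tail identity by applying $P_N$ and $I-P_N$ to the decomposition \eqref{eq:yinfty-of-ystar}, and finally chain the operator bounds already available. First, since $y^\star$ solves \eqref{eq:fixed-point}, applying $P_N$ to $y_\infty(y^\star)=y^\star+\mathcal Z_N(y^\star)$ and using $P_Ny^\star=y^\star$ (because $y^\star\in X_N$) gives $P_Ny_\infty(y^\star)=y^\star+T_Ny^\star=y_r$. This is \eqref{eq:exact-low-mode-tracking}; it uses only the definition \eqref{eq:TN-def}, not the invertibility of $I+T_N$. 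Since $y_r\in X_N$ as well, we get $y_\infty-y_r=y_\infty-P_Ny_\infty=(I-P_N)y_\infty$. Moreover, $(I-P_N)y^\star=0$, so $(I-P_N)y_\infty=(I-P_N)\mathcal Z_N(y^\star)$, which is \eqref{eq:tail-only-mismatch}. Here one should note that $P_N$, defined through the $D(\mathcal A)'$--$D(\mathcal A)$ pairing, restricts on $V'$ to a bounded projection, because $\varphi_k\in D(\mathcal A)\subset V$. Hence $\|I-P_N\|$, taken in $\mathcal L(V')$, is finite.

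For \eqref{eq:tail-estimate-Vprime}, I would estimate
\[
\|(I-P_N)\mathcal Z_N(y^\star)\|_{V'}\le\|I-P_N\|\,C_{\mathrm{cl}}\,\|g(y^\star)\|_{D(\mathcal A)'}
\]
using Lemma~\ref{lem:bias-map}. I would then insert \eqref{eq:g-is-tail}, namely $g(y^\star)=(I-P_N)BU_Ny^\star$, to bound $\|g(y^\star)\|_{D(\mathcal A)'}\le\|(I-P_N)B\|_{\mathcal L(\R^M,D(\mathcal A)')}\|U_N\|\,\|y^\star\|_{X_N}$. The identity \eqref{eq:g-is-tail} itself deserves a line of justification. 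By \eqref{eq:feedforward-moment} we have $P_Ng(y^\star)=0$. Also $A_0y^\star\in X_N$, because $A_0\varphi_k=-\lambda_k\varphi_k$, so $(I-P_N)A_0y^\star=0$. Therefore $g=(I-P_N)g=(I-P_N)BU_Ny^\star$. We also need $(I-P_N)B$ to be bounded into $D(\mathcal A)'$; this follows from Proposition~\ref{prop:B-bounded-DA-dual} and the boundedness of $P_N$ on $D(\mathcal A)'$, since $P_N$ is a finite sum of bounded functionals times the fixed vectors $\varphi_k\in D(\mathcal A)'$.

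Finally, when $I+T_N$ is invertible, I would substitute $\|y^\star\|\le\|(I+T_N)^{-1}\|\,\|y_r\|$ to get \eqref{eq:tail-estimate-with-inverse}. The Neumann series $(I+T_N)^{-1}=\sum_{\ell\ge0}(-T_N)^\ell$, valid for $\|T_N\|<1$, gives $\|(I+T_N)^{-1}\|\le(1-\|T_N\|)^{-1}$. The only delicate point is consistency of norms: the norm on $X_N$ used for $\|U_N\|$ and $\|y^\star\|_{X_N}$ must be the same as the one in which $\|T_N\|$ and $\|(I+T_N)^{-1}\|$ are computed. Since $X_N$ is finite-dimensional, I would fix any norm, for instance the one induced by $H$, and state all operator norms relative to it. I expect the main care to go into the bookkeeping of which space each projection acts on ($V'$ versus $D(\mathcal A)'$), rather than into any genuine analytic difficulty.
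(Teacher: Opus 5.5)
Your proposal is correct and follows the paper's proof essentially step for step. You get the low-mode matching from \eqref{eq:low-mode-matching}, the tail identity from $(I-P_N)y^\star=0$, the bound by combining \eqref{eq:Zmap-bound} with \eqref{eq:g-is-tail}, and the final estimate by substituting $y^\star=(I+T_N)^{-1}y_r$. Your additional checks fill in details the paper leaves implicit but do not change the argument: deriving \eqref{eq:g-is-tail} from the moment cancellation and $A_0X_N\subset X_N$, the boundedness of $P_N$ on $V'$ and $D(\mathcal A)'$, the Neumann-series bound, and fixing one norm on $X_N$.
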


\begin{proof}
The low-mode identity \eqref{eq:exact-low-mode-tracking} is \eqref{eq:low-mode-matching}.
Since $y^\star\in X_N$, we have $(I-P_N)y^\star=0$, and therefore by \eqref{eq:yinfty-of-ystar}
\[
(I-P_N)\bigl(y_\infty(y^\star)-y_r\bigr)
=(I-P_N)\bigl(y^\star+\mathcal{Z}_N(y^\star)-y_r\bigr)
=(I-P_N)\mathcal{Z}_N(y^\star),
\]
which is \eqref{eq:tail-only-mismatch}. Next, combining \eqref{eq:Zmap-bound} with \eqref{eq:g-is-tail} gives
\[
\|y_\infty(y^\star)-y_r\|_{V'}
=\|(I-P_N)\mathcal{Z}_N(y^\star)\|_{V'}
\le \|I-P_N\|\,\|\mathcal{Z}_N(y^\star)\|_{V'}
\le \|I-P_N\|\,C_{\mathrm{cl}}\,\|(I-P_N)B\|\,\|U_N\|\,\|y^\star\|_{X_N},
\]
which is \eqref{eq:tail-estimate-Vprime}. The bound \eqref{eq:tail-estimate-with-inverse} follows from
$y^\star=(I+T_N)^{-1}y_r$.
\end{proof}

\begin{remark}[A sufficient small-gain condition]
\label{rem:small-gain-TN}
From \eqref{eq:TN-def}, Lemma~\ref{lem:bias-map}, and \eqref{eq:g-is-tail} one obtains the bound
\[
\|T_N\|_{\mathcal{L}(X_N)}
\;\le\; \|P_N\|\,C_{\mathrm{cl}}\,\|(I-P_N)B\|_{\mathcal{L}(\mathbb{R}^M,D(\mathcal A)')}\,\|U_N\|.
\]
Hence $\|T_N\|<1$ follows if the tail $\|(I-P_N)B\|$ is sufficiently small and the moment
solver $U_N$ is not too ill-conditioned (in particular, if $\Phi_{N,M}$ has a uniformly positive smallest singular value).
\end{remark}

\begin{remark}[How to compute $T_N$ in practice]
\label{rem:compute-TN}
Represent $y\in X_N$ by its coefficient vector $a\in\mathbb{R}^N$ in the basis $\{\varphi_k\}_{k=1}^N$.
To assemble the matrix of $T_N$, for each $k=1,\dots,N$ set $y^\star=\varphi_k$, compute $u_r=U_N\varphi_k$,
form $g(\varphi_k)$ via \eqref{eq:g-of-ystar}, and compute the equilibrium $z_\infty(\varphi_k)=\mathcal{Z}_N(\varphi_k)$
(by \eqref{eq:Rcl-def}) through the stationary closed-loop problem $A_{\mathrm{cl}}z+g(\varphi_k)=0$. Then the $(\ell,k)$ entry of the
$N\times N$ matrix of $T_N$ is
\[
(T_N)_{\ell k} = \langle z_\infty(\varphi_k),\varphi_\ell\rangle_{V',V},\qquad 1\le \ell,k\le N,
\]
and \eqref{eq:fixed-point} becomes the linear system $(I+T_N)a^\star=a^r$.
\end{remark}

\section{Coupling with the thermo-plasmonic model and tracking}
\label{sec:coupling}
\noindent
We now explain how the abstract control model of Sections~\ref{sec:point-actuators}
and \ref{sec:feedback} is realized by the discrete thermo--plasmonic model
of Section~\ref{sec:model}, and we derive a tracking result for the
nanoparticle-based actuator system.

\subsection{From the whole-space model to a bounded control domain}
\noindent
The effective thermo--plasmonic model is posed in $\R^3$
(Theorem~\ref{thm:discrete-effective}), while the control analysis is
performed on a bounded domain $\Om$. We briefly justify this reduction.

\begin{assumption}[Geometric setting for the control domain]
\label{ass:Omega-large}
Let $B\subset\R^3$ be a fixed compact set that contains all nanoparticle
centers $z_i$ and the application-relevant region (e.g.\ the tumour region
where heat tracking is required). We choose $\Om$ to be a bounded $C^2$
domain such that
\[
B \Subset \Om \Subset \R^3,
\]
and the points $x_j$ where point actuators are located satisfy $x_j\in B$.
\end{assumption}
\noindent
Under Assumption~\ref{ass:Omega-large}, the whole-space solution $w$ of the
effective model can be restricted to $\Om$, and the difference between the
whole-space solution and the bounded-domain one is small on $B$ if $\Om$
is chosen large. This reduction is quantified in Lemma \ref{lem:R3-to-Omega-restriction}.

\subsection{Abstract map from intensity controls to heat inputs} \label{sec:map-EM-Heat}
\noindent
We now summarize the abstract structure of the map from electromagnetic actuation to effective
heat inputs.
In our heat-generation model, the volumetric source is parameterized as a \textit{nonnegative} linear combination of precomputed intensity patterns,  see (\ref{eq:heat-transmission}) and (\ref{Q_i-s}),,:
\begin{equation}\label{eq:heat-source-dictionary}
q(\cdot,t)=\sum_{\ell=1}^P p_\ell(t)\,\psi_\ell(\cdot),
\qquad 
\psi_\ell(\cdot)=\bigl|E_{\mathrm{inc},\ell}(\cdot)\bigr|^2.
\end{equation}
Here each dictionary element corresponds to a choice of incident direction and polarization (and, when relevant, frequency and nanoparticle parameters near resonance). Thus, the actuation map $p\mapsto q$ is linear by construction, with physically admissible coefficients $p_\ell(t)\ge 0$. Consequently, the forcing depends only on the \emph{intensity weights},
\[
 p(t)=(p_\ell(t))_{\ell=1}^P\in\R^P,
\]
and the reduced actuation map is linear in $p$. Throughout the remainder of the paper, $p$ denotes this
(real-valued) intensity/power control parameter.
Let $U$ be a Hilbert space of time-dependent intensity controls (e.g.\ $U=H^{r+1/2}_{0,\sigma}(0,T;\R^P)$). As in
Section~\ref{sec:model}, the Volterra system \eqref{eq:Volterra-system} can be written in operator form
\[
\sigma = \mathcal{V}F + r,
\]
where $F=(F_i)$ is the forcing vector depending linearly on the intensity control $p$, $\mathcal{V}$ is a bounded Volterra operator on
$X:=(H^1(0,T))^M$, and $r$ is a small remainder. The effective heat
inputs associated with the nanoparticles are
\begin{equation}
\label{eq:Gi-sigma}
G_i(t) := \frac{\alpha_i}{c_m}\,\sigma_i(t),\qquad i=1,\dots,M,
\end{equation}
and we denote $G=(G_i)_{i=1}^M$.

\begin{theorem}[Rigorous approximation of the map $p\mapsto G$]
\label{thm:K-rigorous}
Let $U:=H^{r+1/2}_{0,\sigma}(0,T;\R^P)$ be the space of intensity control
parameters, $X:=(H^1(0,T))^M$ and $Y:=L^2(0,T;\R^M)$. Assume that:
\begin{enumerate}
\item[(i)] The map $p\mapsto F(p)$ associated
with the Maxwell problem and the forcing term in \eqref{eq:Volterra-system} extends to a bounded
linear operator $\mathcal{T}:U\to X$.

\item[(ii)] The Volterra system \eqref{eq:Volterra-system} defines a bounded
linear operator $\mathcal{V}\in\mathcal{L}(X,X)$ and the remainder
$r\in X$ satisfies
\[
\norm{r}_X \le C_T M\delta^\mu\norm{F}_X
\]
for some $\mu>0$ and $C_T>0$ independent of $\delta$.

\item[(iii)] The map $\sigma\mapsto G$ given by \eqref{eq:Gi-sigma} extends
to a bounded linear operator $\mathcal{S}\in\mathcal{L}(X,Y)$.
\end{enumerate}
Then there exist a bounded linear operator $\mathcal{K}:U\to Y$ and a
remainder operator $\mathcal{R}^\delta:U\to Y$ such that, for every
$p\in U$,
\begin{equation}
\label{eq:K-decomposition}
G = \mathcal{K}p + \mathcal{R}^\delta p,
\end{equation}
with
\begin{equation}
\label{eq:Rdelta-bound}
\norm{\mathcal{R}^\delta p}_Y \le C_T' M\delta^\mu\norm{p}_U,
\end{equation}
for some constant $C_T'>0$ independent of $\delta$ and $p$.
\end{theorem}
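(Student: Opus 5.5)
The plan is to define the operators by composition and read off the remainder directly from hypothesis (ii). Write the Volterra system \eqref{eq:Volterra-system} in operator form. Let $\mathcal{I}+\mathcal{W}$ denote the operator on $X$ whose $i$-th component is
\[
\sigma_i+\sum_{j\ne i}\beta_{ij}\int_0^t\partial_t\Phi^{(m)}(z_i,t;z_j,\tau)\,\sigma_j(\tau)\,\dd\tau .
\]
By the unique solvability stated in Theorem~\ref{thm:discrete-effective}, its inverse is the bounded operator $\mathcal V\in\mathcal L(X,X)$ of hypothesis (ii). The amplitudes are then
\[
\sigma=\mathcal V\,(F+r), \qquad F=\mathcal T p .
\]
I would interpret the ``small remainder'' in $\sigma=\mathcal VF+r$ in this way, namely as $\mathcal V r$. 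If one instead adds $r$ directly to $\sigma$, the argument below is identical with $\mathcal V$ dropped from the remainder term.

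Next I apply $\mathcal S$ from hypothesis (iii) to obtain
\[
G=\mathcal S\sigma=\mathcal S\mathcal V\mathcal T p+\mathcal S\mathcal V r .
\]
This suggests defining
\[
\mathcal K:=\mathcal S\mathcal V\mathcal T\in\mathcal L(U,Y),
\]
which is bounded and linear as a composition of bounded linear operators. For the remainder I set
\[
\mathcal R^\delta p:=G-\mathcal K p=\mathcal S\mathcal V r(p),
\]
so that \eqref{eq:K-decomposition} holds by construction.

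For the estimate \eqref{eq:Rdelta-bound}, hypothesis (ii) gives $\|r\|_X\le C_TM\delta^\mu\|F\|_X$, and hypothesis (i) gives $\|F\|_X\le\|\mathcal T\|\,\|p\|_U$. Combining these,
\[
\|\mathcal R^\delta p\|_Y\le \|\mathcal S\|\,\|\mathcal V\|\,C_T\,\|\mathcal T\|\,M\delta^\mu\|p\|_U ,
\]
so the constant $C_T'=C_T\|\mathcal S\|\,\|\mathcal V\|\,\|\mathcal T\|$ works. It is independent of $\delta$, provided the operator norms of $\mathcal S$, $\mathcal V$ and $\mathcal T$ are uniform in $\delta$. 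This uniformity is where I need to be careful. The coefficients $\beta_{ij}$ and $\alpha_i$ depend on the scaling regime, so I would invoke the uniform a priori bounds of Theorem~\ref{thm:discrete-effective} for $\mathcal V$, and read hypotheses (i) and (iii) as asserting $\delta$-uniform bounds.

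The main subtlety is whether $\mathcal R^\delta$ is linear, since the remainder $r$ produced by the asymptotic analysis need not depend linearly on $p$. The statement only requires \eqref{eq:Rdelta-bound} pointwise in $p$, so I would define $\mathcal R^\delta$ as the (possibly nonlinear) map $p\mapsto G-\mathcal K p$. I would also note that it is linear whenever $r$ depends linearly on $F$, which is the situation implicit in (ii). A secondary point is that $G$ is well defined for every $p\in U$: this holds because $F=\mathcal T p\in X$ and the Volterra system is uniquely solvable in $X$.
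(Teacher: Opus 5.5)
Your proposal is correct and matches the paper's proof, which also sets $\mathcal K:=\mathcal S\mathcal V\mathcal T$ and $\mathcal R^\delta p:=\mathcal S r(p)$. The paper takes $r$ to be the remainder in $\sigma=\mathcal VF+r$ (your second reading), and the bound follows from (i)--(iii) exactly as you compute. Your remark that $C_T'$ is independent of $\delta$ only if $\|\mathcal S\|$, $\|\mathcal V\|$, $\|\mathcal T\|$ are uniform in $\delta$ makes explicit an assumption the paper leaves implicit.
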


\begin{proof}
As before: $\sigma = \mathcal{V}F + r$ with
$\norm{r}_X\le C_T M\delta^\mu\norm{F}_X$, $F=\mathcal{T}p$, and
$G=\mathcal{S}\sigma$. Define $\mathcal{K}:=\mathcal{S}\mathcal{V}
\mathcal{T}$ and $\mathcal{R}^\delta p := \mathcal{S}r(p)$.
\end{proof}

\begin{remark}[\textbf{Constraints on the intensity control $p$ (nonnegativity)}] \label{Positivity-p}
In the physical model the illumination/intensity parameters satisfy
$p(t)=(p_\ell(t))_{\ell=1}^P\in(\mathbb{R}_+)^P$ since $p_\ell(t)=|a_\ell(t)|^2\ge 0$.
In the reduced implementation the actuator signal that enters the
Dirac-driven heat model is a finite-dimensional quantity
$u_{\mathrm{des}}(t)\in Y_0$ (obtained by projecting the abstract feedback input onto $Y_0$), and it is related to
the intensity vector through the linear map $K_0:U_0\to Y_0$.
Therefore, with the physical constraint $p_l(t)\ge 0, l=1, ... P$, the set of \emph{instantaneously implementable} actuator inputs
is the convex cone
\[
\mathcal{C}_{K_0}:=K_0(\mathbb{R}_+^P)\subset Y_0 .
\]
Consequently, exact realization of the commanded input is possible at time $t$ if and only if
$u_{\mathrm{des}}(t)\in \mathcal{C}_{K_0}$, in which case one may select (for instance) the minimal-norm nonnegative
intensity vector
\[
p(t)\in \arg\min_{p\in\mathbb{R}_+^P}\ \|p\|^2
\quad\text{subject to}\quad K_0 p = u_{\mathrm{des}}(t),
\]
for instance.
When $u_{\mathrm{des}}(t)\notin \mathcal{C}_{K_0}$, one cannot represent it exactly with $p(t)\ge 0$ using the fixed
dictionary; in that case a natural implementation is to use the \emph{cone projection}
\[
p(t)\in \arg\min_{p\in\mathbb{R}_+^P}\ \|K_0 p-u_{\mathrm{des}}(t)\|_{Y_0}^2,
\qquad u_{\mathrm{impl}}(t):=K_0 p(t)\in\mathcal{C}_{K_0},
\]
so that the closed-loop system tracks the best attainable input within $\mathcal{C}_{K_0}$.
In the remainder, $p$ denotes a physically admissible (nonnegative) intensity vector produced by the above procedures. Such arguments will be exploited in a forthcoming work.
\end{remark}

\begin{proposition}[Verification of (i)--(iii) from the discrete model]
\label{prop:verification-K}
Under the assumptions and notation of Theorem~\ref{thm:discrete-effective},
the hypotheses (i)--(iii) of Theorem~\ref{thm:K-rigorous} are satisfied,
with $\mathcal{T},\mathcal{V},\mathcal{S}$ constructed from the discrete
effective thermo--plasmonic model of \cite{CaoMukherjeeSini2025}. In
particular, the approximation \eqref{eq:K-decomposition} holds for all
intensity controls $p\in U$.
\end{proposition}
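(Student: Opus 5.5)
We verify (i), (ii) and (iii) one at a time, each as a direct consequence of the structure in Theorem~\ref{thm:discrete-effective} together with the linear dictionary parameterization \eqref{eq:heat-source-dictionary}.

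For \textbf{(i)}, the starting point is that in \cite{CaoMukherjeeSini2025} the forcing $F_i$ is linear in the source $J$ restricted to $D_i$. By \eqref{eq:heat=J} and \eqref{eq:heat-source-dictionary}, that source equals $\sum_\ell p_\ell(t)\psi_\ell$. Hence $F_i(t)=\sum_{\ell=1}^P c_{i\ell}\,(\mathcal L_i p_\ell)(t)$. Here $c_{i\ell}$ is proportional to $\Im\eps_p\int_{D_i}|E_\ell|^2$, and it is finite because the internal field $E_\ell$ solves \eqref{eq:LS-magnetization} with incident field $E_{\mathrm{inc},\ell}$. The operator $\mathcal L_i$ is a fixed linear time operator coming from the derivation of \eqref{eq:Volterra-system}, for instance the time convolution with a particle kernel or an identity-type operator. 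For a fixed $\delta$, the resonance bound \eqref{eq:resonance-scaling} guarantees that each $c_{i\ell}$ is finite, of size $\delta^{3-2h}\mathfrak A_i|E_{\mathrm{inc},\ell}(z_i)|^2$. It then suffices to show $\mathcal L_i\in\mathcal L(H^{r+1/2}_{0,\sigma}(0,T),H^1(0,T))$. I expect this to follow from the regularity assumed on $f$ in \cite{CaoMukherjeeSini2025}, namely $r+1/2\ge 1$ together with causality, so that the zero initial trace is preserved. With that, $\mathcal T:=(\sum_\ell c_{i\ell}\mathcal L_i)_i$ is bounded $U\to X$.

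For \textbf{(ii)}, we write \eqref{eq:Volterra-system} as $(I+\mathcal W)\sigma=F+r$. The operator $\mathcal W$ has kernel entries $\beta_{ij}\partial_t\Phi^{(m)}(z_i,t;z_j,\tau)$ with $i\neq j$. Because $|z_i-z_j|\ge d_{\min}>0$ by the separation assumption, these kernels are $C^\infty$ in $t-\tau$ and vanish to infinite order at $t=\tau$. Consequently $\mathcal W$ is a Volterra operator with a smooth kernel, bounded on $X$. A Volterra operator with smooth kernel has zero spectral radius, so $I+\mathcal W$ is invertible on $X$; this invertibility is also the unique solvability stated in Theorem~\ref{thm:discrete-effective}. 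We then set $\mathcal V:=(I+\mathcal W)^{-1}$, which gives $\sigma=\mathcal VF+\mathcal Vr$. The remainder $r$ is the quantity bounded in Theorem~\ref{thm:discrete-effective}. To get $\|r\|_X\le C M\delta^{\mu}\|F\|_X$ we use linearity of the whole construction in the data: since every step in \cite{CaoMukherjeeSini2025} is linear in $J$, the remainder bound scales with the size of the source. Rescaling the a priori estimate by the forcing norm therefore gives the proportional form, and we replace $r$ by $\mathcal V r$, whose size is controlled by $\|\mathcal V\|$.

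\textbf{The main obstacle} is this step, because Theorem~\ref{thm:discrete-effective} only states an absolute bound, $CM\delta^{\mu'}$. I would first check that the error terms in \cite{CaoMukherjeeSini2025} are linear operators applied to $J$ (or to $F$) with norms $O(\delta^{\mu'})$. If only nonlinear dependence through $|E|^2$ appears, I would instead argue at the level of the dictionary: for each $\ell$, apply the estimate with $\psi_\ell$ and $p_\ell$, then sum using linearity in $p$.

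For \textbf{(iii)}, the map $\mathcal S\sigma:=(\alpha_i\sigma_i/c_m)_i$ is a diagonal scaling, and $H^1(0,T)\hookrightarrow L^2(0,T)$, so $\|\mathcal S\|\le\max_i|\alpha_i|/c_m$. Theorem~\ref{thm:K-rigorous} then applies. The constants depend on $\delta$ only through the normalization of $F$, and they should be tracked consistently with the scaling regime, for example by absorbing $\delta^{3-2h}$ into $\mathcal T$.
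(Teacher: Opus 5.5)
Your route is the paper's route. The paper's proof is a three-line sketch using exactly your three ingredients: linearity of $F$ in the illumination data for (i), invertibility of $I+\mathcal W$ plus the error estimates of \cite{CaoMukherjeeSini2025} for (ii), and the diagonal scaling with $H^1(0,T)\hookrightarrow L^2(0,T)$ for (iii). Your (i), your (iii) and the Volterra/quasi-nilpotency part of (ii) are fine, and more explicit than the paper.

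The step that fails is the one you flag yourself: rescaling the a priori estimate by the forcing norm does not give $\|r\|_X\le C_TM\delta^\mu\|F\|_X$.
\begin{itemize}
\item Linearity of the construction in $J$, hence in $p$, only gives homogeneity along rays, $r(sp)=s\,r(p)$.
\item Combined with an estimate whose constant depends linearly on the data, this yields a bound $\|r\|_X\le CM\delta^{\mu'}\|p\|_U$, not one in terms of $\|F\|_X$.
\item Passing to $\|F\|_X=\|\mathcal T p\|_X$ would require $\|p\|_U\lesssim\|\mathcal T p\|_X$, and this fails. Assumption~\ref{ass:illumination} allows $P>M$. Then the signed control $p_\ell=a_\ell\varphi$, with $a$ in the kernel of the $M\times P$ coefficient matrix, gives $F=0$. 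Yet the actual source $\sum_\ell p_\ell\,\Im\eps_p|E_\ell|^2$ on $D_i$ is nonzero, so nothing forces $\sigma$, hence $r$, to vanish.
\item Even for $P=M$, the map $\mathcal T:H^{r+1/2}_{0,\sigma}\to H^1$ need not be bounded below.
\end{itemize}
So hypothesis (ii), in its $\|F\|_X$-relative form, cannot be extracted from Theorem~\ref{thm:discrete-effective} by scaling. The paper's bare appeal to ``the error estimates'' does not address this either.

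The repair is your fallback, but state what it actually delivers. Apply the reference's remainder estimate to each dictionary element with time profile $p_\ell$ and sum. First check in \cite{CaoMukherjeeSini2025} that the constant behind the absolute bound $CM\delta^{\mu'}$ depends linearly on $\|f\|_{H^{r+1/2}}$; Theorem~\ref{thm:discrete-effective} suppresses this dependence. This gives a remainder $\mathcal V r$ that is linear in $p$ and satisfies $\|\mathcal V r\|_X\le CM\delta^{\mu'}\|p\|_U$.

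That is not (ii) as written, but it is exactly what the proof of Theorem~\ref{thm:K-rigorous} uses. Since $\mathcal R^\delta p=\mathcal S\,\mathcal V r(p)$, you get $\|\mathcal R^\delta p\|_Y\le\|\mathcal S\|\,CM\delta^{\mu'}\|p\|_U$. This is \eqref{eq:Rdelta-bound}, and with $\mathcal K=\mathcal S\mathcal V\mathcal T$ it yields \eqref{eq:K-decomposition} for all $p\in U$.

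The correct statement is therefore: (i), (iii) and a $\|p\|_U$-relative version of (ii) hold, and that modified hypothesis suffices for the conclusion.
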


\begin{proof}
The arguments are as follows: (i) uses the linear dependence
of $F_i$ on the EM fields, (ii) follows from the invertibility of
$I+\mathcal{W}$ in the Volterra system and the error estimates in
\cite{CaoMukherjeeSini2025}, and (iii) is immediate from
\eqref{eq:Gi-sigma} and the embedding $H^1(0,T)\hookrightarrow L^2(0,T)$.
\end{proof}

\subsection{Approximate right-inverse for \texorpdfstring{$\mathcal{K}$}{K}}\label{sec:K-inverse}
\noindent
We now state conditions under which $\mathcal{K}$ admits an approximate
right-inverse on a finite-dimensional subspace of inputs, as used in the
tracking theorem.

\begin{assumption}[Non-degenerate illumination patterns]
\label{ass:illumination}
Let $D_i=z_i+\delta B_i$, $i=1,\dots,M$, be the nanoparticle configuration
satisfying the assumptions of Theorem~\ref{thm:discrete-effective}, and fix
a finite family of incident fields $E^{(\ell)}_{\mathrm{in}}$,
$\ell=1,\dots,P$, with corresponding total fields $E^{(\ell)}$ solving
\eqref{eq:maxwell}. Let $c_i^{(\ell)}(\delta)$ denote the leading-order
coefficient relating $|E^{(\ell)}|^2$ to the forcing term $F_i$ in
\eqref{eq:Volterra-system}. Assume that
\[
c_i^{(\ell)}(\delta) = c_i^{(\ell)}(0) + \mathcal{O}(\delta^\mu),
\]
and that the $M\times P$ matrix $L_0=(c_i^{(\ell)}(0))$ has rank $M$, with
a uniformly bounded right-inverse on its range. In particular, $P\ge M$.
\end{assumption}
\noindent
Let $\varphi\in H^1_0(0,T)$ be a fixed nontrivial scalar function and set
\[
U_0 := \{\ p\in L^2(0,T;\R^P): p_\ell(t)=\alpha_\ell\varphi(t)\ \},
\qquad
Y_0 := \{\ G\in L^2(0,T;\R^M): G_i(t)=\beta_i\varphi(t)\ \}.
\]

\begin{proposition}[Approximate right inverse on $Y_0$]
\label{prop:K-right-inverse}
Assume the setting of Theorem~\ref{thm:K-rigorous},
Proposition~\ref{prop:verification-K}, and Assumption~\ref{ass:illumination}.
Then, for $\delta>0$ sufficiently small, the restriction
$\mathcal{K}|_{U_0}:U_0\to Y_0$ is invertible and admits a right-inverse
$\mathcal{K}^\dagger:Y_0\to U_0$ such that
\[
\mathcal{K}\mathcal{K}^\dagger = I_{Y_0},\qquad
\norm{\mathcal{K}^\dagger}_{\mathcal{L}(Y_0,U_0)} \le C,
\]
for some constant $C>0$ independent of $\delta$.
\end{proposition}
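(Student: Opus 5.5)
The plan is to reduce the statement to a finite-dimensional matrix statement. The key point is to make sure that, for $p\in U_0$, the output $\mathcal K p$ lies \emph{exactly} in $Y_0$, and not only up to a projection. The decomposition \eqref{eq:K-decomposition} in Theorem~\ref{thm:K-rigorous} fixes only the sum $\mathcal K+\mathcal R^\delta$. I will therefore use the freedom in splitting $G$ into a principal part and a remainder, and choose $\mathcal K$ to be the \emph{time-separable} leading part of $\mathcal S\mathcal V\mathcal T$.

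\textbf{Step 1: separable structure of $\mathcal T$ on $U_0$.} For $p=\alpha\varphi\in U_0$, the Maxwell problem \eqref{eq:maxwell} is time-harmonic. Hence $|E^{(\ell)}|^2$ does not depend on $t$, and the temporal profile enters multiplicatively exactly as $f$ does in \eqref{eq:heat=J}. By Assumption~\ref{ass:illumination} this gives
\[
F_i(t)=\Big(\sum_{\ell=1}^P c_i^{(\ell)}(\delta)\alpha_\ell\Big)\varphi(t)
=\big((L_0+\mathcal O(\delta^\mu))\alpha\big)_i\,\varphi(t).
\]

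\textbf{Step 2: separating off the Volterra interaction.} Write $\mathcal V=(I+\mathcal W)^{-1}$, where $\mathcal W$ is the convolution with the kernel $\beta_{ij}\partial_t\Phi^{(m)}(z_i,t;z_j,\tau)$ for $j\ne i$. This operator does not preserve the profile $\varphi$. I will therefore split
\[
\mathcal V=I+\big((I+\mathcal W)^{-1}-I\big).
\]
In the regime of \cite{CaoMukherjeeSini2025}, the interaction coefficients $\beta_{ij}$ carry a positive power of $\delta$, because they are proportional to particle volumes times contrast. This yields $\|(I+\mathcal W)^{-1}-I\|_{\mathcal L(X)}\lesssim\delta^{\mu}$, possibly after shrinking $\mu$. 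I then redefine
\[
\mathcal K p:=\mathcal S\,\mathcal T_0 p,\qquad
(\mathcal T_0p)_i:=\Big(\sum_{\ell=1}^P c_i^{(\ell)}(0)\,p_\ell\Big).
\]
All other contributions (the interaction part, the $\mathcal O(\delta^\mu)$ coefficient corrections, and $r$) go into $\mathcal R^\delta$. The bound \eqref{eq:Rdelta-bound} is preserved by (i)--(iii) of Proposition~\ref{prop:verification-K}. With this choice,
\[
\mathcal K(\alpha\varphi)=\big(K_0\alpha\big)\varphi,\qquad
K_0:=\mathrm{diag}\big(\alpha_i/c_m\big)L_0\in\R^{M\times P}.
\]
Hence $\mathcal K(U_0)\subset Y_0$ exactly, and $\mathcal K|_{U_0}$ is represented by the matrix $K_0$.

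\textbf{Step 3: the right inverse.} Under the high-contrast assumption, $\alpha_i\neq0$. Since $L_0$ has rank $M$ with a bounded right inverse, $K_0$ also has full row rank $M$. Set $K_0^\dagger:=K_0^\top(K_0K_0^\top)^{-1}$ and define
\[
\mathcal K^\dagger(\beta\varphi):=(K_0^\dagger\beta)\varphi .
\]
Then $\mathcal K\mathcal K^\dagger(\beta\varphi)=(K_0K_0^\dagger\beta)\varphi=\beta\varphi$, so $\mathcal K\mathcal K^\dagger=I_{Y_0}$. Moreover $\|\beta\varphi\|_{L^2}=|\beta|\,\|\varphi\|_{L^2}$, so $\|\mathcal K^\dagger\|=|K_0^\dagger|$, which is independent of $\delta$. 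If one prefers to keep the $\delta$-dependent coefficients $c_i^{(\ell)}(\delta)$ inside $\mathcal K$, the same formula applies to $K_\delta=K_0+\mathcal O(\delta^\mu)$. A Neumann-series perturbation of $K_\delta K_\delta^\top$ then gives invertibility and the uniform bound $\le 2|K_0^\dagger|$ for $\delta$ small; this is where the smallness of $\delta$ is used. "Invertible" is read as surjective onto $Y_0$ and bijective from $(\ker\mathcal K|_{U_0})^\perp$, because $P\ge M$ and strict injectivity holds only when $P=M$.

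\textbf{Main obstacle.} The delicate step is Step 2: justifying that the Volterra interaction part is genuinely $\mathcal O(\delta^\mu)$ in $\mathcal L(X)$, so that it can be moved into $\mathcal R^\delta$. Otherwise $\mathcal K(U_0)\not\subset Y_0$. I would extract this bound from the scaling of $\beta_{ij}$ in \cite{CaoMukherjeeSini2025}, together with the separation of the centres $z_i$. The separation makes $\partial_t\Phi^{(m)}(z_i,\cdot;z_j,\cdot)$ a smooth, bounded kernel, so $\mathcal W$ is a bounded Volterra operator whose norm scales like $\max_{i\neq j}|\beta_{ij}|$.
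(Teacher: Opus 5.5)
You correctly spot something the paper's proof passes over. $\mathcal W$ acts by time convolution with $\partial_t\Phi^{(m)}(z_i,\cdot;z_j,\cdot)$, so it does not map $\beta\varphi$ to a multiple of $\varphi$. Hence $\mathcal S\mathcal V\mathcal T$ does not literally send $U_0$ into $Y_0$. The paper treats the coupling differently from you:
\begin{itemize}
\item it represents the leading part of $\mathcal V$ on $Y_0$ by an invertible $M\times M$ matrix $M_\delta$ and writes $\mathcal K$ as $K_\delta=DM_\delta L_\delta$;
\item it obtains a $\delta$-uniform right inverse by perturbing $K_0=DM_0L_0$, which has rank $M$ because $D$ and $M_0$ are invertible and $L_0$ has rank $M$.
\end{itemize}
That matrix representation is an idealization the paper does not justify, for exactly the reason you give. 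But note what it uses: only \emph{invertibility} of the interaction at leading order, never its smallness. Your Steps~1 and~3 are otherwise fine. Your reading of ``invertible'' as ``surjective with a bounded right inverse'' is also the right one when $P>M$.

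The gap is Step~2. The estimate $\|(I+\mathcal W)^{-1}-I\|_{\mathcal L(X)}\lesssim\delta^\mu$ does not follow from Theorem~\ref{thm:discrete-effective}, Theorem~\ref{thm:K-rigorous}, Proposition~\ref{prop:verification-K} or Assumption~\ref{ass:illumination}. The $\beta_{ij}$ are only said to depend on geometry and contrast. The paper's own argument keeps the Volterra coupling at leading order: $M_0$ is invertible, not $M_0=I$.

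Your heuristic is not reliable either, for two reasons:
\begin{itemize}
\item In a high-contrast regime the contrast is scaled with $\delta$ precisely so as to compete with the small volume, so ``volume times contrast'' need not be small.
\item The size of $\mathcal W$ is $|\beta_{ij}|$ times a kernel norm that blows up if the separation shrinks with $\delta$. For instance, $\|\partial_t\Phi^{(m)}(z_i,\cdot;z_j,0)\|_{L^1(0,\infty)}$ is of order $|z_i-z_j|^{-3}$.
\end{itemize}
Hypotheses (i)--(iii) give only boundedness, so they do not transfer \eqref{eq:Rdelta-bound} to your enlarged remainder. If the coupling is of order one, then $\mathcal S(\mathcal V-I)\mathcal T$ is an $O(1)$ term inside your $\mathcal R^\delta$. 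Then \eqref{eq:Rdelta-bound} fails, and with it $\eta(\delta)\to0$ in Theorem~\ref{thm:5.7}. So your argument proves the proposition only under an extra weak-coupling hypothesis, which you would have to state explicitly.

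If you want to keep the interaction, invert it rather than discard it. For $G\in Y_0$ put $p(t):=L_\delta^\dagger\big[(I+\mathcal W)\mathcal S^{-1}G\big](t)$. This uses only the pointwise-in-time structure of $\mathcal T$ from your Step~1 and gives $\mathcal S\mathcal V\mathcal T p=G$ exactly. The price is that $p$ lies in the at most $M$-dimensional space $\mathcal K^\dagger(Y_0)\subset U$ (given enough regularity of $\varphi$), not in the separable space $U_0$.
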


\begin{proof}
On $U_0$ and $Y_0$, the operators $\mathcal{T},\mathcal{V},\mathcal{S}$
reduce to finite-dimensional matrices corresponding to the coefficients of
$\varphi(t)$. The leading term of $\mathcal{T}$ is represented by $L_\delta$,
the leading term of $\mathcal{V}$ by an invertible $M\times M$ matrix, and
$\mathcal{S}$ is a diagonal invertible matrix. Thus $\mathcal{K}$ is
represented by $K_\delta:=D M_\delta L_\delta$, where $D$ and $M_\delta$
are invertible and $L_\delta\to L_0$ as $\delta\to0$. Since $L_0$ has rank
$M$ by Assumption~\ref{ass:illumination}, $K_0:=DM_0L_0$ has rank $M$ and
admits a right-inverse $K_0^\dagger$. For $\delta$ small, $K_\delta$ is a
small perturbation of $K_0$, hence admits a right-inverse $K_\delta^\dagger$
with uniformly bounded norm. Identifying $U_0$ and $Y_0$ with $\R^P$ and
$\R^M$ via $\varphi$, we define $\mathcal{K}^\dagger:=K_\delta^\dagger$ and
obtain the stated properties.
\end{proof}

\subsection{Tracking with nanoparticles as actuators}
\label{subsec:tracking-nano}
\noindent
We now combine the abstract closed-loop system and the thermo–plasmonic realization to obtain the tracking result.
\noindent
Let $y_r \in H$ be a fixed reference profile and let $T>0$.
Fix a desired heat input $u_{\mathrm{des}}\in Y_0$ and choose the intensity control $p$ via
$p=\mathcal{K}^\dagger u_{\mathrm{des}}$ as in Proposition~\ref{prop:K-right-inverse}.
Let $y_{Y_0}$ denote the solution of \eqref{eq:5.Y0} driven by $u_{\mathrm{des}}$ and let
$y_{\mathrm{phys}}$ denote the physical temperature generated by the nanoparticle cluster
(i.e.\ by inputs $G$ given by \eqref{eq:K-decomposition}).

\begin{theorem}[Tracking with nanoparticles as actuators (error bounds in $V'$)]
\label{thm:5.7}
Assume the setting of Theorems~\ref{thm:discrete-effective} and~\ref{thm:K-rigorous} and
Proposition~\ref{prop:K-right-inverse}. Let $T>0$ and let $u_{\mathrm{des}}\in Y_0\subset L^2(0,T;\R^M)$.
Choose the intensity control as
\[
  p(t) = \mathcal{K}^\dagger u_{\mathrm{des}}(t)\in U_0.
\]
Let $y_{Y_0}$ be the solution of \eqref{eq:5.Y0}  driven by $u_{\mathrm{des}}$, and let $y_{\mathrm{phys}}$
be the temperature generated by the thermo--plasmonic actuators corresponding to the above choice of $p$.
Then there exist a constant $C_T>0$ and a function $\eta:(0,1]\to(0,\infty)$ with $\eta(\delta)\to 0$
as $\delta\to 0$ such that
\begin{equation}
\label{eq:tracking-estimate}
\| y_{\mathrm{phys}} - y_{Y_0} \|_{C([0,T];V')}^2 + \| y_{\mathrm{phys}} - y_{Y_0} \|_{L^2(0,T;H)}^2
\;\le\; C_T\,\eta(\delta)^2\,\|u_{\mathrm{des}}\|_{L^2(0,T;\R^M)}^2.
\end{equation}
Moreover, for every $t_0\in(0,T]$ there exists $C_{T,t_0}>0$ such that
\begin{equation}
\label{eq:tracking-estimate-H-after-t0}
\sup_{t\in[t_0,T]} \| y_{\mathrm{phys}}(t) - y_{Y_0}(t) \|_{H}
\;\le\; C_{T,t_0}\,\eta(\delta)\,\|u_{\mathrm{des}}\|_{L^2(0,T;\R^M)}.
\end{equation}
\end{theorem}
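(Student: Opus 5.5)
The plan is to reduce everything to linearity and the well-posedness estimate of Proposition~\ref{prop:wp-Dirac}. Both $y_{Y_0}$ and $y_{\mathrm{phys}}$ solve the Dirac-forced heat equation \eqref{eq:heat-DAdual} with the same initial datum. Their forcings are $Bu_{\mathrm{des}}$ and $BG$ respectively, where $G=\mathcal K p+\mathcal R^\delta p$ with $p=\mathcal K^\dagger u_{\mathrm{des}}$ by Theorem~\ref{thm:K-rigorous}. I take this to be the meaning of ``physical temperature'' at the level of the effective model on $\Omega$. The whole-space-to-$\Omega$ discrepancy of Lemma~\ref{lem:R3-to-Omega-restriction} and the field remainder $\mathcal E$ of Theorem~\ref{thm:discrete-effective} are either absorbed into $\eta(\delta)$ or treated as an additional additive term; see the last paragraph.

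\textbf{Step 1.} Since $\mathcal K\mathcal K^\dagger=I_{Y_0}$ (Proposition~\ref{prop:K-right-inverse}), we have $G-u_{\mathrm{des}}=\mathcal R^\delta\mathcal K^\dagger u_{\mathrm{des}}$. Set $e:=y_{\mathrm{phys}}-y_{Y_0}$. By linearity, $e$ solves $\dot e=A_0e+B(G-u_{\mathrm{des}})$ with $e(0)=0$. Here $G-u_{\mathrm{des}}\in L^2(0,T;\R^M)$, because $\mathcal R^\delta$ maps into $Y=L^2(0,T;\R^M)$.

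\textbf{Step 2.} Apply \eqref{eq:wp-estimates} with $y_0=0$. This gives
\[
\|e\|_{C([0,T];V')}^2+\|e\|_{L^2(0,T;H)}^2\le C_T\|\mathcal R^\delta\mathcal K^\dagger u_{\mathrm{des}}\|_{L^2(0,T;\R^M)}^2 .
\]
Then use \eqref{eq:Rdelta-bound} and the uniform bound $\|\mathcal K^\dagger\|\le C$. This step needs a norm compatibility on $U_0$: the space $U_0$ is finite dimensional (it is $\varphi\otimes\R^P$), so the $U$-norm and the $L^2$-norm are equivalent there, with a constant depending only on the fixed profile $\varphi$. The result is \eqref{eq:tracking-estimate} with $\eta(\delta):=C'M\delta^\mu$ (plus any extra modeling terms).

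\textbf{Step 3.} For \eqref{eq:tracking-estimate-H-after-t0}, I would run a parabolic smoothing argument on $[t_0/2,T]$, as in Corollary~\ref{cor:L2-after-t0}. Pick $s\in[t_0/2,t_0]$ with $\|e(s)\|_H^2\le (2/t_0)\|e\|_{L^2(0,T;H)}^2$. Restarting at time $s$ with $H$-data, the standard $H$-energy estimate for Dirac forcing needs $B\in\mathcal L(\R^M,V')$ rather than only $D(\mathcal A)'$. This holds in $d=3$ as long as $V$ embeds into $C(\overline\Omega)$, which I would check from the definition of $V$ (if $V=D(\mathcal A)$ it is immediate). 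Failing that, I would use the analytic-semigroup estimate $\|S(t)\|_{\mathcal L(D(\mathcal A)',H)}\lesssim t^{-1}$ together with the $H^1(0,T)$ regularity in time of $G-u_{\mathrm{des}}$, where $\varphi\in H^1_0$ and $\sigma\in H^1$, and integrate by parts in the Duhamel integral. Either route yields $\sup_{[t_0,T]}\|e\|_H\le C_{T,t_0}(\|e(s)\|_H+\|G-u_{\mathrm{des}}\|)$, which gives the claim.

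The main obstacle is Step 3, namely obtaining $H$-control for Dirac forcing: time integrability alone is borderline, so I expect to need either the embedding $V\hookrightarrow C(\overline\Omega)$ or the time regularity of the inputs. A secondary issue is making precise that $y_{\mathrm{phys}}$ is exactly the Dirac-driven model with input $G$. If instead it is the true field, I would add the terms from $\mathcal E$ and from \eqref{eq:R3-to-Omega-simple} on $B$, both $o(1)$ as $\delta\to0$, and include them in $\eta$.
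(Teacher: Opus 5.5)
Your Steps 1 and 2 are exactly the paper's proof. With $\rho:=G-u_{\mathrm{des}}=\mathcal R^\delta\mathcal K^\dagger u_{\mathrm{des}}$, the difference $e$ solves $\dot e=A_0e+B\rho$, $e(0)=0$, and Proposition~\ref{prop:wp-Dirac} together with \eqref{eq:Rdelta-bound} gives \eqref{eq:tracking-estimate}. Your remark that the $U$- and $L^2$-norms are equivalent on the finite-dimensional $U_0$ supplies a detail the paper leaves implicit.

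The two arguments part at Step 3. The paper derives \eqref{eq:tracking-estimate-H-after-t0} in one line from Corollary~\ref{cor:L2-after-t0}. That corollary, however, is a smoothing estimate for the unforced closed loop. For $\dot e=A_0e+B\rho$ with only $\|\rho\|_{L^2(0,T;\R^M)}$ under control, such an estimate fails in $d=3$. Indeed $\|S(t)\delta_x\|_H\sim t^{-3/4}$, so a nonnegative $\rho(s)=(t_1-s)^{-1/2}|\log(t_1-s)|^{-1}$ near a prescribed $t_1\ge t_0$ lies in $L^2$ but gives $e(t_1)\notin H$. You are right that extra input is needed, and the situation is worse than borderline.

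Your first route does not supply that input. If $V$ is the form domain $H^1(\Omega)$ (the $V$ of the appendix, with $V'=[H,D(\mathcal A)']_{1/2}$), the Lions energy estimate would give $C([0,T];H)$, but $\delta_x\notin V'$ in $d=3$. If instead $V=D(\mathcal A)$, then $B\in\mathcal L(\R^M,V')$ is trivial, but no $H$-energy estimate holds for $D(\mathcal A)'$-valued forcing.

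Your second route is the correct fix. It is cleanest written as a lifting: $w:=e-\mathcal A^{-1}B\rho$ satisfies
\[
\dot w=A_0w+\mathcal A^{-1}B(\rho-\dot\rho),\qquad w(0)=-\mathcal A^{-1}B\rho(0).
\]
The forcing lies in $L^2(0,T;H)$ because $\mathcal A^{-1}:D(\mathcal A)'\to H$ is an isomorphism. Hence $\sup_{t\in[0,T]}\|e(t)\|_H\le C_T\|\rho\|_{H^1(0,T;\R^M)}$, uniformly down to $t=0$, so no restart at some $s\in[t_0/2,t_0]$ is needed.

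What you still have to add is a quantitative $H^1$ bound on $\rho$; the mere regularity $\sigma\in H^1$ is not enough. The conclusion \eqref{eq:Rdelta-bound} is only in $Y=L^2$, but hypothesis (ii) of Theorem~\ref{thm:K-rigorous} controls the remainder in $X=(H^1(0,T))^M$. Since $\mathcal K p=u_{\mathrm{des}}$ exactly, $\rho=\mathcal S r$ with $\mathcal S$ a diagonal scaling. This gives $\|\rho\|_{H^1}\lesssim M\delta^\mu\|\mathcal T\|\,\|p\|_U\lesssim\delta^\mu\|u_{\mathrm{des}}\|_{L^2(0,T;\R^M)}$ on $U_0$. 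With this bound, your argument proves \eqref{eq:tracking-estimate-H-after-t0} with the same $\eta$, on firmer ground than the paper's appeal to Corollary~\ref{cor:L2-after-t0}.

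Finally, in your last paragraph, the restriction error of Lemma~\ref{lem:R3-to-Omega-restriction} is of size $\exp(-cd^2/T)$ and does not vanish as $\delta\to0$, so it could not be absorbed into $\eta(\delta)$. This is moot here, since the paper, like your main line, takes $y_{\mathrm{phys}}$ to be the Dirac-driven effective model on $\Omega$ with input $G$.
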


\begin{proof}
Let $p=\mathcal{K}^\dagger u_{\mathrm{des}}$ and let $G$ be the thermo--plasmonic heat input produced by $p$.
By Theorem~\ref{thm:K-rigorous} and Proposition~\ref{prop:K-right-inverse}, we have
\[
G = \mathcal{K}p + \mathcal{R}^\delta p
  = u_{\mathrm{des}} + \mathcal{R}^\delta p.
\]
Set the input discrepancy $\rho := G-u_{\mathrm{des}}=\mathcal{R}^\delta p$.
Using \eqref{eq:Rdelta-bound} and the boundedness of $\mathcal{K}^\dagger$ on $Y_0$ yields
\[
\|\rho\|_{L^2(0,T;\R^M)}
\le \eta(\delta)\,\|u_{\mathrm{des}}\|_{L^2(0,T;\R^M)},
\]
for some $\eta(\delta)\to 0$ as $\delta\to 0$.
\noindent
Let $e:=y_{\mathrm{phys}}-y_{Y_0}$. Subtracting the equations for $y_{\mathrm{phys}}$ and $y_{Y_0}$ gives
\[
\dot e(t)=A e(t) + B\rho(t),\qquad e(0)=0,
\]
in $D(\mathcal A)'$. Applying Proposition~\ref{prop:wp-Dirac} to this forced equation yields
\[
\|e\|_{C([0,T];V')}^2 + \|e\|_{L^2(0,T;H)}^2
\le C_T\,\|\rho\|_{L^2(0,T;\R^M)}^2,
\]
which implies \eqref{eq:tracking-estimate}. The $H$--estimate on $[t_0,T]$ follows from
\eqref{eq:tracking-estimate} and Corollary~\ref{cor:L2-after-t0}.
\end{proof}

\section{Robustness: projection and thermo--plasmonic realization errors}
\label{sec:robust-Vprime}
\noindent
The ideal closed-loop error $z_{\mathrm{ideal}}=y_{\mathrm{ideal}}-y_r$ is generated by the feedback
\eqref{eq:KRW-feedback} and solves \eqref{eq:KRW-closed-loop} with the constant forcing $g$ given by
\eqref{eq:feedforward-g}. Let $z_\infty$ be the corresponding equilibrium defined in \eqref{eq:z-infty}.
Then the shifted error $\tilde z_{\mathrm{ideal}}:=z_{\mathrm{ideal}}-z_\infty$ satisfies the homogeneous
stabilized dynamics and hence the exponential estimate in $V'$:
\[
\|\tilde z_{\mathrm{ideal}}(t)\|_{V'} \le e^{-\mu t}\|\tilde z_{\mathrm{ideal}}(0)\|_{V'},\qquad t\ge 0,
\]
for suitable $M,\lambda$ as in Theorem~\ref{thm:KRW-Vprime}.

\subsection{Finite-horizon deviation estimates}
\noindent
Let $z$ be the actual error produced by the thermo--plasmonic actuation scheme.
Assume that the actual forcing differs from the ideal Dirac forcing by a remainder
$r(t)\in D(\mathcal A)'$ on $[0,T]$, i.e.
\[
\dot z(t)=A_0 z(t) -\lambda\sum_{j=1}^M \langle d_j,\mathcal A^{-1}z(t)\rangle\, d_j + g + r(t),
\quad z(0)=z_0\in V'.
\]
Then, by the well-posedness estimate in Proposition~\ref{prop:wp-Dirac} applied to the difference
$e(t):=z(t)-z_{\mathrm{ideal}}(t)$, one obtains
\begin{equation}
\label{eq:err-Vprime}
\|e\|_{C([0,T];V')}^2 + \|e\|_{L^2(0,T;H)}^2
\le C_T\,\|r\|_{L^2(0,T;D(\mathcal A)')}^2.
\end{equation}

\subsection{$L^2$ tracking after a positive time}
\noindent
Fix $t_0\in(0,T]$. Combining \eqref{eq:err-Vprime} with Corollary~\ref{cor:L2-after-t0},
we obtain the $L^2$-in-space deviation estimate on $[t_0,T]$:
\[
\sup_{t\in[t_0,T]}\|z(t)-z_{\mathrm{ideal}}(t)\|_{L^2(\Omega)}
\le C(t_0)\,\|e\|_{L^2((t_0/2,T);H)}
\le C(t_0)\,C_T^{1/2}\,\|r\|_{L^2(0,T;D(\mathcal A)')}.
\]
This is the form in which we will use the robustness estimate in the remainder of the paper.

\section{How Theorem~\ref{thm:5.7} answers the original control problem}
\label{rem:link-original-problem}
\noindent
We divide it into few steps:

\begin{enumerate}
\item The original control question can be formulated as follows: given a desired
temperature profile $y_r$ in $\Omega$, can one design incident electromagnetic
fields acting on a finite collection of thermo--plasmonic nanoparticles so that
the resulting temperature $y_{\mathrm{phys}}(t)$ in the medium tracks $y_r$?

Our analysis proceeds in two steps.

\smallskip
\noindent
(1) \emph{Abstract heat tracking with point actuators.}
In Section~\ref{sec:stab-KRW} we use the feedback law, augmented by a constant feedforward
$u_r$ for non-constant references (see \eqref{eq:ref-finite-modes}--\eqref{eq:feedforward-linear-system}).
The resulting closed loop for the error $z(t)=y(t)-y_r$ is the affine system \eqref{eq:KRW-closed-loop}.
Theorem~\ref{thm:KRW-Vprime} yields exponential stabilization in the $V'$ framework of the shifted error
$z(t)-z_\infty$, and hence convergence of $y_{\mathrm{ideal}}(t)$ towards $y_r+z_\infty$ in $V'$ (and, by smoothing,
in $H$ for any $t\ge t_0>0$).

If $y_r$ is an equilibrium of the free Neumann heat dynamics (equivalently $Ay_r=0$),
then one may take $u_r=0$ and $z_\infty=0$, so that $y_{\mathrm{ideal}}(t)\to y_r$.
In general, for non-equilibrium references $y_r\in X_N$, the above step yields convergence of
$y_{\mathrm{ideal}}(t)$ towards the steady reached profile $y_r+z_\infty$.

To enforce exact tracking of the prescribed low-mode target at steady state, we use the fixed-point
pre-compensation of Subsection~\ref{subsec:fixed-point-bias}. More precisely, we compute a
\emph{commanded reference} $y^\star\in X_N$ by solving the fixed-point equation
\eqref{eq:fixed-point} (see Proposition~\ref{prop:fixed-point}), and we run the same feedback law with reference
$y^\star$ (and corresponding feedforward $u_r(y^\star)=U_Ny^\star$). The resulting closed-loop temperature
converges to the steady reached profile $y_\infty(y^\star)$ defined in \eqref{eq:yinfty-of-ystar}, which satisfies
the exact low-mode matching property $P_Ny_\infty(y^\star)=y_r$ by \eqref{eq:exact-low-mode-tracking}; the
remaining mismatch is a pure tail controlled in Corollary~\ref{cor:fixed-point-tail}.

\smallskip
\noindent
(2) \emph{Realization of the ideal actuators by thermo--plasmonic nanoparticles.}
Sections~\ref{sec:model} and \ref{sec:coupling} show that the cluster of plasmonic nanoparticles, illuminated
by incident fields $p$, generates an effective heat input $G=\mathcal{K}p+\mathcal{R}^\delta p$ in
the background heat equation. Proposition~\ref{prop:K-right-inverse} ensures that on a finite
dimensional subspace $Y_0$ we can invert the leading part of $\mathcal{K}$: for any
desired abstract input $u_{\mathrm{des}}\in Y_0$ there exists an incident field
$p=\mathcal{K}^\dagger u_{\mathrm{des}}$ such that $G=u_{\mathrm{des}}+\mathcal{R}^\delta p$ with
$\|\mathcal{R}^\delta p\|_{L^2(0,T;\mathbb{R}^M)}\le C_T\delta^\mu \|u_{\mathrm{des}}\|$.

\smallskip
Theorem~\ref{thm:5.7} combines these two steps. It compares the ``ideal''
closed-loop temperature $y_{\mathrm{ideal}}$ driven by the abstract control
$u_{\mathrm{des}}$ with the ``physical'' temperature $y_{\mathrm{phys}}$
driven by the thermo--plasmonic actuators corresponding to
$p=\mathcal{K}^\dagger u_{\mathrm{des}}$. The estimate
\[
\sup_{t\in[0,T]} \|y_{\mathrm{phys}}(t)-y_{\mathrm{ideal}}(t)\|_H
  \le C_T\,\eta(\delta)\,\|u_{\mathrm{des}}\|_{L^2(0,T;\mathbb{R}^M)},
\]
with $\eta(\delta)\to 0$ as $\delta\to 0$, shows that the physical temperature
follows the ideal one up to an error that can be made arbitrarily small by
choosing sufficiently small particles (i.e.\ working within the validity
regime of the effective model). Since the closed loop yields exponential decay of the shifted error $z_{\mathrm{ideal}}(t)-z_\infty$ in $V'$ (Theorem~\ref{thm:KRW-Vprime}), combining Theorem~\ref{thm:5.7} and Corollary~\ref{cor:total-error} shows that the nanoparticle--based system drives $y_{\mathrm{phys}}(t)$ towards the corresponding reached steady profile (which equals $y_r$ in the equilibrium case, and satisfies $P_Ny_\infty(y^\star)=y_r$ in the fixed-point compensated case) up to a discrepancy that can be made small by choosing $Y_0$ rich enough and by taking $\delta$ sufficiently small.

In this sense, Theorem~\ref{thm:5.7} provides a rigorous and quantitative
affirmative answer to the original problem of using thermo--plasmonic
nanoparticles as actuators for heat tracking in $\Omega$.

\medskip

\item We now explain how the restriction $u_{\mathrm{des}}\in Y_0$ can be enforced
for a given reference profile $y_r$. Let $\bar y$ denote the reference used in Step~(1): we set $\bar y:=y_r$ in the
equilibrium case, and $\bar y:=y^\star$ in the fixed-point compensated case, where $y^\star$ solves
\eqref{eq:fixed-point}. Let $\bar u_r\in\mathbb{R}^M$ be the corresponding constant feedforward computed as in
\eqref{eq:ref-finite-modes}--\eqref{eq:feedforward-linear-system} (equivalently, $\bar u_r=U_N\bar y$).

Let $u_{\mathrm{ideal}}$ denote the ideal Dirac-actuator input generated by the feedback
(Section~\ref{sec:stab-KRW}), i.e.
\[
  u_{\mathrm{ideal},j}(t)
  := (\bar u_r)_j-\lambda\,\big(\mathcal A^{-1}(y_{\mathrm{ideal}}(t)-\bar y)\big)(x_j),
  \qquad j=1,\dots,M,
\]
where $y_{\mathrm{ideal}}$ is the corresponding closed-loop trajectory.
In general $u_{\mathrm{ideal}}$ lies in $L^2(0,T;\mathbb{R}^M)$, not necessarily in the
finite-dimensional subspace $Y_0$ that can be realized efficiently by the thermo--plasmonic actuators.

A natural way to enforce the constraint $u_{\mathrm{des}}\in Y_0$ is to
project $u_{\mathrm{ideal}}$ onto $Y_0$ and to use only this projected signal
as the ``desired'' input for the thermo--plasmonic realization. More
precisely, we set
\begin{equation}
\label{eq:5.Y0-u-des}
  u_{\mathrm{des}} := P_{Y_0} u_{\mathrm{ideal}},
\end{equation}
where $P_{Y_0}$ denotes the orthogonal projection from
$L^2(0,T;\mathbb{R}^M)$ onto $Y_0$. By construction, $u_{\mathrm{des}}\in Y_0$
and therefore satisfies the hypothesis of Theorem~\ref{thm:5.7}.

Let $y_{Y_0}$ denote the solution of the heat equation driven by
$u_{\mathrm{des}}$:
\begin{equation}
\label{eq:5.Y0}
\begin{cases}
\dot y_{Y_0}(t) = A y_{Y_0}(t) + B u_{\mathrm{des}}(t), & t>0,\\[0.2em]
y_{Y_0}(0) = y_0.
\end{cases}
\end{equation}
The next proposition quantifies the discrepancy between $y_{Y_0}$ and the
ideal closed-loop trajectory $y_{\mathrm{ideal}}$.

\begin{proposition}[Effect of restricting the control to $Y_0$ (in the $V'$ framework)]
\label{prop:projection-error}
Let $T>0$ and let $u_{\mathrm{ideal}}\in L^2(0,T;\R^M)$. Define
$u_{\mathrm{des}}:=P_{Y_0}u_{\mathrm{ideal}}$ as in \eqref{eq:5.Y0-u-des}.
Let $y_{\mathrm{ideal}}$ be the solution of \eqref{eq:heat-DAdual} driven by $u_{\mathrm{ideal}}$
and let $y_{Y_0}$ be the solution of \eqref{eq:5.Y0} driven by $u_{\mathrm{des}}$, with the same initial
condition $y_0\in H\subset V'$. Then there exists $C_T>0$ such that
\begin{equation}
\label{eq:proj-error}
\| y_{Y_0} - y_{\mathrm{ideal}} \|_{C([0,T];V')}^2
+ \| y_{Y_0} - y_{\mathrm{ideal}} \|_{L^2(0,T;H)}^2
\le C_T\,\|(I-P_{Y_0})u_{\mathrm{ideal}}\|_{L^2(0,T;\R^M)}^2.
\end{equation}
Moreover, for every $t_0\in(0,T]$ there exists $C_{T,t_0}>0$ such that
\begin{equation}
\label{eq:proj-error-H-after-t0}
\sup_{t\in[t_0,T]} \| y_{Y_0}(t) - y_{\mathrm{ideal}}(t) \|_{H}
\le C_{T,t_0}\,\|(I-P_{Y_0})u_{\mathrm{ideal}}\|_{L^2(0,T;\R^M)}.
\end{equation}
In particular,
\begin{equation}
\label{eq:best-approx}
\| y_{Y_0} - y_{\mathrm{ideal}} \|_{C([0,T];V')}
\le C_T^{1/2}\inf_{v\in Y_0}\|u_{\mathrm{ideal}}-v\|_{L^2(0,T;\R^M)}.
\end{equation}
\end{proposition}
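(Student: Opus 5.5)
The plan is to reduce everything to the open-loop well-posedness estimate of Proposition~\ref{prop:wp-Dirac}, applied to the difference of the two trajectories. One point needs care: the statement treats $y_{\mathrm{ideal}}$ as the solution of \eqref{eq:heat-DAdual} driven by a \emph{given} input $u_{\mathrm{ideal}}\in L^2(0,T;\R^M)$. It is not the solution of the feedback loop with the projected input. Consequently the feedback is not re-evaluated along $y_{Y_0}$, and the problem is purely linear and open-loop.

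\textbf{Step 1: the error equation.} Set $e:=y_{Y_0}-y_{\mathrm{ideal}}$. Both functions lie in $W(0,T;H,D(\mathcal A)')$ and solve the forced equation in $D(\mathcal A)'$ with the same initial datum $y_0$. Subtracting, and using linearity of $B$, gives
\[
\dot e = A_0 e + B\bigl(u_{\mathrm{des}}-u_{\mathrm{ideal}}\bigr) = A_0 e - B(I-P_{Y_0})u_{\mathrm{ideal}},\qquad e(0)=0 .
\]
Here $(I-P_{Y_0})u_{\mathrm{ideal}}\in L^2(0,T;\R^M)$, because $P_{Y_0}$ is an orthogonal projection on $L^2(0,T;\R^M)$ and is therefore a contraction.

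\textbf{Step 2: the estimate \eqref{eq:proj-error}.} By the uniqueness part of Proposition~\ref{prop:wp-Dirac}, $e$ is the unique solution with this forcing and zero initial datum. Estimate \eqref{eq:wp-estimates} with $y_0=0$ and $u=-(I-P_{Y_0})u_{\mathrm{ideal}}$ then gives \eqref{eq:proj-error} directly.

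\textbf{Step 3: the best-approximation bound \eqref{eq:best-approx}.} Drop the $L^2(0,T;H)$ term in \eqref{eq:proj-error} and take square roots. Then use that the orthogonal projection realizes the distance to the closed subspace $Y_0$:
\[
\|(I-P_{Y_0})u_{\mathrm{ideal}}\|_{L^2(0,T;\R^M)}=\inf_{v\in Y_0}\|u_{\mathrm{ideal}}-v\|_{L^2(0,T;\R^M)}.
\]
$Y_0$ is closed because it is finite-dimensional.

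\textbf{Step 4: the $H$-bound \eqref{eq:proj-error-H-after-t0}.} This is the only step needing more than bookkeeping. Corollary~\ref{cor:L2-after-t0} is stated for the homogeneous closed loop, whereas $e$ is driven by a Dirac forcing, so I cannot cite it as is. I would argue directly in the eigenbasis. Write
\[
e(t)=S(t-s)e(s)-\int_s^t S(t-\tau)B(I-P_{Y_0})u_{\mathrm{ideal}}(\tau)\,\dd\tau .
\]
For the first term, pick $s\in(t_0/2,t_0)$ with $\|e(s)\|_H\lesssim t_0^{-1/2}\|e\|_{L^2(0,T;H)}$ by averaging; the contraction property of $S$ then bounds $\|S(t-s)e(s)\|_H$.

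The Duhamel term is the obstacle. With Dirac forcing it is not bounded in $H$ pointwise in time for a general $L^2$ input, since $\|S(r)\delta_{x}\|_H\sim r^{-3/4}$ in $d=3$, which is not square-integrable near $r=0$. I expect this to require either a stronger norm on the input or an interpretation of the $\sup$ bound via $L^2$-in-time regularity. What I would try first is to exploit the finite-dimensional structure of $Y_0$ and of the inputs on $[t_0,T]$. Failing that, I would settle for an $L^2(t_0,T;H)$ bound, which already follows from \eqref{eq:proj-error}, and adjust the constant $C_{T,t_0}$ accordingly, flagging the pointwise-in-time claim as the delicate point.
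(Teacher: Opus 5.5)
Your Steps 1--3 are exactly the paper's argument for \eqref{eq:proj-error} and \eqref{eq:best-approx}. You subtract the two equations, apply Proposition~\ref{prop:wp-Dirac} with zero initial datum, and use that $P_{Y_0}$ realizes the distance to $Y_0$. Your sign $-B(I-P_{Y_0})u_{\mathrm{ideal}}$ is the correct one; the paper writes $+$, which is immaterial for norms.

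For \eqref{eq:proj-error-H-after-t0} the paper only says that it follows from \eqref{eq:proj-error} and Corollary~\ref{cor:L2-after-t0}. Your refusal to take that step is right. The corollary is a smoothing statement for the homogeneous closed loop, whose input $-\lambda Cz$ is itself regular in time for $t>0$. Here the forcing $(I-P_{Y_0})u_{\mathrm{ideal}}$ is an arbitrary element of the infinite-dimensional space $Y_0^\perp$. It can concentrate just before any $t\in[t_0,T]$, so the margin $t_0>0$ buys nothing. In fact \eqref{eq:proj-error-H-after-t0} is false in $d=3$, so the part your proposal leaves open cannot be closed. In particular, your idea of exploiting the finite dimensionality of $Y_0$ cannot help, because the relevant input lives in its complement.

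Your heuristic $\int_0^t\|S(r)\delta_x\|_H^2\,\dd r=\infty$ is only a Hilbert--Schmidt-type quantity and is not decisive by itself. It also diverges, logarithmically, in $d=2$, where point actuation with $L^2$ inputs is nevertheless admissible in $H$ (Carleson-measure criterion). The sharp argument is by duality. A bound $\|\int_0^tS(t-\tau)\delta_{x_1}w(\tau)\,\dd\tau\|_H\le C\|w\|_{L^2(0,t)}$ is equivalent to $\int_0^t|(S(r)\phi)(x_1)|^2\,\dd r\le C^2\|\phi\|_H^2$ for all $\phi\in D(\mathcal A)$. Take $\phi=S(\eps)\delta_{x_1}$:
\begin{itemize}
\item $\|\phi\|_H^2=G_\Omega(x_1,x_1,2\eps)\sim\eps^{-3/2}$;
\item $\int_0^tG_\Omega(x_1,x_1,r+\eps)^2\,\dd r\sim\eps^{-2}$.
\end{itemize}
So no such $C$ exists. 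By the closed graph theorem, some $w\in L^2(0,t)$ (extended by zero to $(0,T)$) produces a state outside $H$ at time $t$. Inputs in $Y_0$ do not: write $\delta_{x_1}=\mathcal A\,\mathcal A^{-1}\delta_{x_1}$ with $\mathcal A^{-1}\delta_{x_1}\in H$ and integrate by parts in time, using $\varphi\in H^1_0(0,T)$. Hence $u_{\mathrm{ideal}}:=(I-P_{Y_0})(w,0,\dots,0)$, for which $u_{\mathrm{des}}=0$, makes the left-hand side of \eqref{eq:proj-error-H-after-t0} infinite while the right-hand side is finite.

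What does hold is the $L^2(0,T;H)$ bound you already have. One can also get a pointwise $H$-bound if the right-hand side includes a time-regularity norm such as $\|(I-P_{Y_0})u_{\mathrm{ideal}}\|_{H^1(t_0/2,T;\R^M)}$. This uses the same integration by parts on $(t_0/2,t)$, plus parabolic smoothing of $e(t_0/2)$. That norm is finite when $u_{\mathrm{ideal}}$ is the actual closed-loop input, which is presumably the intended setting.
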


\begin{proof}
Set $e_{\mathrm{proj}}:=y_{Y_0}-y_{\mathrm{ideal}}$. Then
\[
\dot e_{\mathrm{proj}}(t) = A e_{\mathrm{proj}}(t) + B\bigl(u_{\mathrm{des}}(t)-u_{\mathrm{ideal}}(t)\bigr)
= A e_{\mathrm{proj}}(t) + B\,(I-P_{Y_0})u_{\mathrm{ideal}}(t),
\qquad e_{\mathrm{proj}}(0)=0,
\]
in $D(\mathcal A)'$. Applying Proposition~\ref{prop:wp-Dirac} yields \eqref{eq:proj-error}.
The estimate \eqref{eq:proj-error-H-after-t0} follows from \eqref{eq:proj-error} and
Corollary~\ref{cor:L2-after-t0}. Finally, since $P_{Y_0}$ is the orthogonal projector onto $Y_0$,
\[
\|(I-P_{Y_0})u_{\mathrm{ideal}}\|_{L^2} = \inf_{v\in Y_0}\|u_{\mathrm{ideal}}-v\|_{L^2},
\]
which gives \eqref{eq:best-approx}.
\end{proof}

\medskip
\item Theorem~\ref{thm:5.7} then compares $y_{Y_0}$ with the physical temperature
$y_{\mathrm{phys}}$ generated by the thermo--plasmonic actuators when the
electromagnetic control is chosen as $p = \mathcal{K}^\dagger u_{\mathrm{des}}$.
Combining Proposition~\ref{prop:projection-error} with Theorem~\ref{thm:5.7} yields an
explicit bound on the total tracking error between the ideal closed-loop
trajectory and the physically realized one.

\begin{corollary}[Total tracking error: projection + nanoparticles (in $V'$)]
\label{cor:total-error}
Let $T>0$ and let $u_{\mathrm{ideal}}\in L^2(0,T;\R^M)$ be an ``ideal'' heat input.
Set $u_{\mathrm{des}}:=P_{Y_0}u_{\mathrm{ideal}}$ and let $y_{\mathrm{ideal}}$, $y_{Y_0}$ be as in
Proposition~\ref{prop:projection-error}. Let $y_{\mathrm{phys}}$ be the physical temperature produced by
the thermo--plasmonic actuators when the intensity control $p$ is chosen as in Theorem~\ref{thm:5.7}:
\[
  p = \mathcal{K}^\dagger u_{\mathrm{des}}.
\]
Then there exists $C_T>0$ such that
$$
\|y_{\mathrm{phys}}-y_{\mathrm{ideal}}\|_{C([0,T];V')}^2 + \|y_{\mathrm{phys}}-y_{\mathrm{ideal}}\|_{L^2(0,T;H)}^2
$$
\begin{equation}
\label{eq:total-error}
\le C_T\Bigl(
\|(I-P_{Y_0})u_{\mathrm{ideal}}\|_{L^2(0,T;\R^M)}^2
+ \eta(\delta)^2\,\|u_{\mathrm{des}}\|_{L^2(0,T;\R^M)}^2
\Bigr),
\end{equation}
where $\eta(\delta)\to 0$ is as in Theorem~\ref{thm:5.7}. Moreover, for every $t_0\in(0,T]$,
\[
\sup_{t\in[t_0,T]}\|y_{\mathrm{phys}}(t)-y_{\mathrm{ideal}}(t)\|_{H}
\le C_{T,t_0}\Bigl(
\|(I-P_{Y_0})u_{\mathrm{ideal}}\|_{L^2(0,T;\R^M)}
+ \eta(\delta)\,\|u_{\mathrm{des}}\|_{L^2(0,T;\R^M)}
\Bigr),
\]
for some constant $C_{T,t_0}>0$.
\end{corollary}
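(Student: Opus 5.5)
The plan is a triangle-inequality argument through the intermediate trajectory $y_{Y_0}$. I would write
\[
y_{\mathrm{phys}}-y_{\mathrm{ideal}} = (y_{\mathrm{phys}}-y_{Y_0}) + (y_{Y_0}-y_{\mathrm{ideal}}),
\]
and bound each term separately in $C([0,T];V')\cap L^2(0,T;H)$.

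The second term is handled directly by Proposition~\ref{prop:projection-error}, estimate \eqref{eq:proj-error}. Its bound is $C_T\|(I-P_{Y_0})u_{\mathrm{ideal}}\|^2_{L^2(0,T;\R^M)}$.

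For the first term I would apply Theorem~\ref{thm:5.7} with $u_{\mathrm{des}}=P_{Y_0}u_{\mathrm{ideal}}\in Y_0$ and $p=\mathcal K^\dagger u_{\mathrm{des}}$. This gives the bound $C_T\eta(\delta)^2\|u_{\mathrm{des}}\|^2$.

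To justify that step I would re-read the proof of Theorem~\ref{thm:5.7}. The error $e=y_{\mathrm{phys}}-y_{Y_0}$ solves $\dot e = A_0 e + B\rho$ with $e(0)=0$ and $\rho=\mathcal R^\delta p$. This requires that $y_{\mathrm{phys}}$ and $y_{Y_0}$ start from the same initial datum $y_0$, which I take as implicit in the setting. Proposition~\ref{prop:wp-Dirac} then closes the estimate, using $\|\rho\|\le C_T' M\delta^\mu\|\mathcal K^\dagger\|\,\|u_{\mathrm{des}}\|$ from \eqref{eq:Rdelta-bound} and Proposition~\ref{prop:K-right-inverse}.

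To combine the two bounds, I use that each of the norms $\|\cdot\|_{C([0,T];V')}$ and $\|\cdot\|_{L^2(0,T;H)}$ satisfies $\|a+b\|^2\le 2\|a\|^2+2\|b\|^2$. Summing the two estimates and taking the larger constant, doubled, gives \eqref{eq:total-error}.

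For the pointwise $H$ estimate on $[t_0,T]$ I would argue the same way. I add \eqref{eq:proj-error-H-after-t0} and \eqref{eq:tracking-estimate-H-after-t0} through the triangle inequality in $H$ at each fixed $t\in[t_0,T]$, then take the supremum and set $C_{T,t_0}$ to be the maximum of the two constants.

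The only point needing care is the linearity and well-posedness framework. Both error equations are linear, forced by $B$ applied to an $L^2(0,T;\R^M)$ signal, and have zero initial data, so Proposition~\ref{prop:wp-Dirac} applies verbatim to each. One could alternatively estimate the total error in one shot: $y_{\mathrm{phys}}-y_{\mathrm{ideal}}$ solves the forced equation with input $\rho-(I-P_{Y_0})u_{\mathrm{ideal}}$ and zero initial condition. Applying \eqref{eq:wp-estimates} to it gives the same bound, and I would mention this as a check.

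The $t_0$-smoothing step inherits whatever justification Corollary~\ref{cor:L2-after-t0} provides for the open-loop forced equation. I take it as given, consistent with its use in Theorem~\ref{thm:5.7} and Proposition~\ref{prop:projection-error}, and I do not expect an obstacle there.
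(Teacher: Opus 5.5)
Your proposal is correct relative to the paper's earlier results, and it is the paper's own proof: split $y_{\mathrm{phys}}-y_{\mathrm{ideal}}$ through $y_{Y_0}$, bound the two pieces by Theorem~\ref{thm:5.7} and Proposition~\ref{prop:projection-error}, combine them, and get the $[t_0,T]$ bound by adding \eqref{eq:tracking-estimate-H-after-t0} and \eqref{eq:proj-error-H-after-t0}. Your claim that the $t_0$-step poses no obstacle is too optimistic, because Corollary~\ref{cor:L2-after-t0} is a smoothing estimate for the unforced closed loop and cannot give pointwise $H$-bounds for $\dot e=A_0e+B\rho$, $e(0)=0$, with $\rho$ merely in $L^2(0,T;\R^M)$ (in three dimensions $\langle S(a)\delta_{x_j},S(b)\delta_{x_j}\rangle_H\sim(a+b)^{-3/2}$ for small $a,b$, so e.g.\ $\rho(s)=(t-s)^{-1/2}|\log(t-s)|^{-1}$ near $s=t$ is square integrable yet gives $\|e(t)\|_H=\infty$); hence the $H$-part for general $u_{\mathrm{ideal}}$ is only as valid as those two cited estimates, which would need a stronger time norm on the input, e.g.\ $L^p$ with $p>4$.
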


\begin{proof}
We decompose
\[
y_{\mathrm{phys}}-y_{\mathrm{ideal}}=(y_{\mathrm{phys}}-y_{Y_0})+(y_{Y_0}-y_{\mathrm{ideal}}).
\]
Applying Theorem~\ref{thm:5.7} to $y_{\mathrm{phys}}-y_{Y_0}$ and Proposition~\ref{prop:projection-error}
to $y_{Y_0}-y_{\mathrm{ideal}}$, and then using the triangle inequality, yields \eqref{eq:total-error}.
The $H$--estimate on $[t_0,T]$ follows similarly from \eqref{eq:tracking-estimate-H-after-t0} and
\eqref{eq:proj-error-H-after-t0}.
\end{proof}
\end{enumerate}

\medskip

\noindent
Proposition~\ref{prop:projection-error} measures
the error introduced by restricting the abstract feedback design to controls
lying in the finite-dimensional subspace $Y_0$, while Theorem~\ref{thm:5.7} measures
the additional error incurred when realizing such controls by thermo--plasmonic
nanoparticles. Corollary~\ref{cor:total-error} shows that the total tracking
error is the sum of these two contributions: a purely ``approximation'' term,
given by the best $L^2$--approximation of $u_{\mathrm{ideal}}$ in $Y_0$, and
a ``physical realization'' term of order $\eta(\delta)$ coming from the
effective thermo--plasmonic model.

\section{Algorithmic design}
\label{sec:algorithm}
\noindent
In this section, we summarize how the abstract results of Sections~\ref{sec:feedback}--\ref{sec:coupling} can be turned into a constructive procedure for heat tracking using thermo--plasmonic nanoparticles.
\newline
The feedback design in Section~\ref{sec:feedback} produces an \emph{ideal} closed-loop
trajectory $y_{\mathrm{ideal}}$ for the abstract point-actuator model, driven
by an implicit control signal $u_{\mathrm{ideal}}$. The thermo--plasmonic
analysis in Section~\ref{sec:coupling} shows how to realize a finite-dimensional approximation
$u_{\mathrm{des}}\in Y_0$ of this signal by suitable EM illuminations
$p\in U_0$, with a small error controlled by the particle size $\delta$.
\newline
We now discuss how each ingredient can be constructed in practice and how
they combine into an algorithm.

\subsection{Implementable feedback law}
\label{subsec:alg-KRW}
\noindent
Let $y_r\in X_N$ satisfy \eqref{eq:ref-finite-modes}. For equilibrium references (i.e.\ $A_0y_r=0$), we set
$\bar y:=y_r$ and $u_r:=0$. For general (non-equilibrium) references, we incorporate the fixed-point
pre-compensation of Subsection~\ref{subsec:fixed-point-bias}: we compute the \emph{commanded reference}
$y^\star\in X_N$ as the solution of \eqref{eq:fixed-point} (see Proposition~\ref{prop:fixed-point}) and set
$\bar y:=y^\star$. In both cases, we compute a constant feedforward vector $u_r\in\R^M$ associated with
$\bar y$ by solving \eqref{eq:feedforward-linear-system} (equivalently $u_r=U_N\bar y$ in the notation of Section~\ref{sec:stab-KRW}).
\newline
We use the feedback
\[
u_j(t) = (u_r)_j-\lambda\,\big(\mathcal A^{-1}(y(t)-\bar y)\big)(x_j),
\qquad \mathcal A := I-\kappa_m\Delta,
\]
which is equivalent to \eqref{eq:KRW-feedback} (with reference $\bar y$).
In the fixed-point compensated case, the reached steady profile $y_\infty(\bar y)$ satisfies
$P_Ny_\infty(\bar y)=y_r$ by \eqref{eq:exact-low-mode-tracking}, and the remaining mismatch is a pure tail
controlled by Corollary~\ref{cor:fixed-point-tail}.

\paragraph{Offline choice of actuator points and $\lambda$.}
Choose actuator locations $x_1,\dots,x_M$ so that Assumption~\ref{assump:KRW-act} holds.
In rectangular domains, \cite{KunRodWal24-cocv} provides explicit constructive placements.
Then, for a desired decay rate $\mu>0$, choose $M$ and $\lambda>0$ according to Theorem~\ref{thm:KRW-Vprime}.
(Practically, $\lambda$ can be tuned numerically by monitoring the decay of $\|y(t)-\bar y\|_{V'}$.)

\paragraph{Online step (computing $u(t)$).}
At each time $t$:
\begin{enumerate}
\item Form the error state $z(t)=y(t)-\bar y$.
\item Solve the elliptic problem for $w(t)=\mathcal A^{-1}z(t)$:
\[
w(t) - \kappa_m\Delta w(t) = z(t)\ \text{in }\Omega,\qquad \partial_\nu w(t)=0\ \text{on }\partial\Omega.
\]
\item Set $u_j(t)=(u_r)_j-\lambda\,w(t,x_j)$ for $j=1,\dots,M$.
\end{enumerate}

\paragraph{Remark on sensors.}
The above requires knowledge of $z(t)$ in $\Omega$ to solve the elliptic problem.
If only boundary (or locally averaged) measurements are available, one needs a state estimation step; this lies beyond
the present scope and can be incorporated by standard observer constructions once the model is fixed.

\subsection{Identification of the thermo--plasmonic map $\mathcal{K}$}
\label{subsec:alg-Ktp}
\noindent
The thermo--plasmonic analysis of Section~\ref{sec:coupling} shows that, in the effective
regime covered by Theorem~\ref{thm:K-rigorous}, the cluster of nanoparticles acts on the
background heat equation as an effective finite number of point sources:
\begin{equation}
\label{eq:alg-G-Ktp}
G = \mathcal{K} p + R_\delta p,
\end{equation}
where $p$ is the intensity control (incident fields), $G$ is the resulting heat
input in the background heat equation, $\mathcal{K}$ is a bounded linear
operator (explicitly decomposed as $SVT$ in Theorem~\ref{thm:K-rigorous}), and $R_\delta$ is
a small remainder of order $\mathcal{O}(\delta^\mu)$ in suitable norms.
\noindent
We restrict our attention to finite-dimensional subspaces
$U_0\subset U_{\mathrm{em}}$ and $Y_0\subset L^2(0,T;\R^M)$ as in
Section~\ref{sec:map-EM-Heat}, so that $\mathcal{K}$ is represented by a matrix
$K_0:U_0\to Y_0$. Theorem~\ref{thm:K-rigorous} and Proposition~\ref{prop:K-right-inverse} guarantee that, under the
assumptions on the geometry, material parameters and illuminations,
$K_0$ has full rank and admits a bounded right inverse $K_0^\dagger$.
\noindent
In practice, $\mathcal{K}$ is most conveniently obtained by as follows:

\begin{enumerate}
  \item Choose a basis $\{\psi^{(1)},\dots,\psi^{(P)}\}$ of $U_0$ (e.g.\ a
  finite set of time profiles and spatial illuminations) and identify
  $U_0\simeq\R^P$ via
  \[
    p(t) = \sum_{\ell=1}^P \alpha_\ell \psi^{(\ell)}(t)
    \quad\Longleftrightarrow\quad
    \alpha = (\alpha_1,\dots,\alpha_P)^\top\in\R^P.
  \]

  \item Fix a basis of $Y_0$ (for example $Y_0=\{\beta\varphi(t):\beta\in\R^M\}$
  with a given time profile $\varphi$) and identify $Y_0\simeq\R^M$ in the
  same way.

  \item For each basis illumination $\psi^{(\ell)}$, solve (numerically) the
  coupled Maxwell--heat model in the effective regime and extract the
  resulting heat input $G^{(\ell)}\in Y_0$ entering the background heat
  equation. In the matrix representation, this corresponds to
  \[
    K_0 e_\ell = g^{(\ell)},\qquad
    g^{(\ell)}\in\R^M,
  \]
  where $e_\ell$ is the $\ell$th canonical basis vector in $\R^P$ and
  $g^{(\ell)}$ is the coordinate vector of $G^{(\ell)}$ in $Y_0$.

  \item Assemble the matrix $K_0\in\R^{M\times P}$ whose columns are
  $g^{(1)},\dots,g^{(P)}$. The structural analysis in Section~\ref{sec:coupling} ensures that,
  for a suitable choice of illuminations and small $\delta$, $K_0$ has full
  rank $M$ and $\|K_0^\dagger\|$ is uniformly bounded.

  \item Compute a right inverse $K_0^\dagger:Y_0\to U_0$, for example via an
  SVD-based pseudo-inverse. This is the constructive version of the operator
  $K^\dagger$ used in Theorem~\ref{thm:5.7}.
\end{enumerate}
Once $K_0$ and $K_0^\dagger$ are known, the finite-dimensional relation
\eqref{eq:alg-G-Ktp} can be used pointwise in time as
\[
  G(t) \approx K_0 p(t),\qquad
  p(t) \approx K_0^\dagger G(t),
\]
with an error of order $\mathcal{O}(\delta^\mu)$ quantified in
Theorem~\ref{thm:5.7}.

\subsection{Control loop with projection onto $Y_0$}
\label{subsec:alg-loop}
\noindent
Let $y_r$ satisfy \eqref{eq:ref-finite-modes}. Define the reference $\bar y$ used in the feedback as follows:
set $\bar y:=y_r$ in the equilibrium case ($A_0y_r=0$), and $\bar y:=y^\star$ in the non-equilibrium case,
where $y^\star\in X_N$ solves the fixed-point equation \eqref{eq:fixed-point}.
Compute a constant feedforward vector $u_r\in\R^M$ associated with $\bar y$ by solving
\eqref{eq:feedforward-linear-system} (equivalently, $u_r=U_N\bar y$).
\noindent
Let $u_{\mathrm{ideal}}$ be the \emph{ideal} Dirac-actuator input generated by the feedback law
(Section~\ref{sec:stab-KRW}):
\begin{equation}
\label{eq:alg-u-ideal}
u_{\mathrm{ideal},j}(t) := (u_r)_j -\lambda\,\big(\mathcal A^{-1}(y(t)-\bar y)\big)(x_j),
\qquad j=1,\dots,M,\quad \mathcal A:=I-\kappa_m\Delta.
\end{equation}
In general $u_{\mathrm{ideal}}\notin Y_0$; we therefore project it onto $Y_0$ as in \eqref{eq:5.Y0-u-des}:
\begin{equation}
\label{eq:alg-u-des}
u_{\mathrm{des}} := P_{Y_0} u_{\mathrm{ideal}} \in Y_0.
\end{equation}
The auxiliary trajectory $y_{Y_0}$ driven by $u_{\mathrm{des}}$ is defined by \eqref{eq:5.Y0}.
Proposition~\ref{prop:projection-error} quantifies the deviation between $y_{Y_0}$ and the ideal trajectory driven by $u_{\mathrm{ideal}}$ in the $V'$ framework (and in $H$ after any $t_0>0$).
\noindent
To realize $u_{\mathrm{des}}$ physically, we choose the intensity control $p\in U_0$ via
\begin{equation}
\label{eq:alg-p-Kdagger}
p := K_0^\dagger u_{\mathrm{des}},
\end{equation}
so that $K_0 p$ approximates $u_{\mathrm{des}}$ in the sense of \eqref{eq:alg-G-Ktp}.
Theorem~\ref{thm:5.7} then yields bounds on $y_{\mathrm{phys}}-y_{Y_0}$ in $V'$ (and in $H$ after $t_0>0$).
Combining both contributions gives Corollary~\ref{cor:total-error}.

\medskip

\noindent\textbf{Summary of the implementable loop.}
\begin{enumerate}
\item \textbf{Choose actuator points and gain.}
Select $x_1,\dots,x_M$ so that Assumption~\ref{assump:KRW-act} holds and choose $\lambda>0$
according to Theorem~\ref{thm:KRW-Vprime} for the desired decay rate in $V'$.

\item \textbf{(If needed) fixed-point pre-compensation on $X_N$.}
If $y_r\in X_N$ is not an equilibrium of the free Neumann heat dynamics, solve \eqref{eq:fixed-point} for
$y^\star$ and set $\bar y:=y^\star$; otherwise set $\bar y:=y_r$.
Compute the corresponding feedforward $u_r$ by solving \eqref{eq:feedforward-linear-system}
(equivalently $u_r=U_N\bar y$).

\item \textbf{Identify $K_0$ and $K_0^\dagger$.}
Fix finite-dimensional spaces $U_0$, $Y_0$ and perform the calibration described in
Subsection~\ref{subsec:alg-Ktp} to obtain $K_0$ and a right inverse $K_0^\dagger$.

\item \textbf{At each time $t$ (feedback + projection + EM actuation).}
\begin{enumerate}
\item Form the error state $z(t)=y(t)-\bar y$.
\item Solve $w(t)=\mathcal A^{-1}z(t)$, i.e.\ $w-\kappa_m\Delta w=z$ in $\Omega$ with $\partial_\nu w=0$ on $\partial\Omega$.
\item Set $u_{\mathrm{ideal},j}(t)=(u_r)_j-\lambda\,w(t,x_j)$.
\item Project $u_{\mathrm{des}}(t)=P_{Y_0}u_{\mathrm{ideal}}(t)$.
\item Compute $p(t)=K_0^\dagger u_{\mathrm{des}}(t)$ and apply the corresponding EM illumination during the next time step.
\end{enumerate}
\end{enumerate}

\section{Appendix}\label{appendix}
\subsection{Spectral assumptions}
\begin{proposition}[Genericity of Assumption~\ref{ass:rank-4}]
\label{prop:generic-rank}
Fix $M\in\mathbb{N}$ and consider the map
\[
F:(\Omega)^M \to \mathbb{R},\qquad
F(x_1,\dots,x_M) := \det\big(\phi_k(x_j)\big)_{j,k=1}^M,
\]
where $\{\phi_k\}_{k\ge1}$ is the eigenbasis of $-A_0$ introduced above.
Then:
\begin{enumerate}
\item[(a)] $F$ is a real-analytic function on $(\Omega)^M$.
\item[(b)] $F$ is not identically zero.
\item[(c)] The zero set 
\[
\mathcal{Z}_M := \{(x_1,\dots,x_M)\in\Omega^M : F(x_1,\dots,x_M)=0\}
\]
has Lebesgue measure zero in $\Omega^M$.
\end{enumerate}
In particular, Assumption~\ref{ass:rank-4} holds for almost every choice of
the actuator positions $(x_1,\dots,x_M)\in\Omega^M$ in the sense of
Lebesgue measure. Then Assumption ~\ref{ass:rank-4} is a full row rank condition for the 
$N\times M$ matrix of the first $
N$ modes, with $N \leq M$.
\end{proposition}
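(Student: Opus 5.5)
Part (a) comes from interior analyticity of the Neumann eigenfunctions. Each $\phi_k$ solves $-\kappa_m\Delta\phi_k=\lambda_k\phi_k$ in $\Omega$, an elliptic equation with constant coefficients, so $\phi_k$ is real-analytic in the open set $\Omega$. This is classical elliptic regularity for analytic-coefficient operators; for the Helmholtz-type operator it also follows from the mean-value/Bessel representation. The boundary regularity ($C^{1,1}$ or $C^2$) plays no role, because only interior points are involved. The determinant $F(x_1,\dots,x_M)$ is a polynomial in the analytic functions $(x_1,\dots,x_M)\mapsto\phi_k(x_j)$ on the product $\Omega^M$, so it is real-analytic there.

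Part (b) I would prove by induction on the size of the minor, using linear independence of $\phi_1,\dots,\phi_M$ as functions on $\Omega$, which holds since they are $L^2$-orthonormal. The claim is that for every $m\le M$ there exist points $x_1,\dots,x_m$ with $\det(\phi_k(x_j))_{j,k\le m}\neq0$. The case $m=1$ holds because $\phi_1\not\equiv0$. For the inductive step, suppose the $m\times m$ minor is nonzero at $x_1,\dots,x_m$. Expanding along the last row, as a function of the free variable $x_{m+1}=x$, gives
\[
x\mapsto \det\big(\phi_k(x_j)\big)_{j,k\le m+1}=\sum_{k=1}^{m+1}c_k\,\phi_k(x),
\]
with $c_{m+1}$ equal to the nonzero $m\times m$ minor. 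By linear independence this combination is not identically zero, so some $x_{m+1}$ makes it nonzero. At $m=M$ this yields (b). This is the one step needing an idea rather than a citation, and it is where I would be careful. The expansion must be arranged so the coefficient of the new function $\phi_{m+1}$ is exactly the previously secured minor, and linear independence must be used on $\Omega$ itself rather than on $\overline\Omega$; orthonormality in $L^2(\Omega)$ gives exactly this.

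Part (c) follows from the standard fact that the zero set of a real-analytic function that does not vanish identically on a connected open set has Lebesgue measure zero. Since $\Omega$ is connected, $\Omega^M$ is connected. The fact is proved by induction on dimension via Fubini: a nontrivial analytic function of one variable has isolated zeros, and for almost every slice the restriction is not identically zero. Connectedness is essential here, because (b) gives non-vanishing somewhere and analyticity with the identity theorem then excludes vanishing on any open subset.

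For the final claim, I would handle $N\le M$ by applying the same argument to the $N\times N$ minor $\det(\phi_k(x_j))_{j,k\le N}$, viewed as an analytic function on $\Omega^M$ that depends only on $x_1,\dots,x_N$. Outside a null set it is nonzero, so $\Phi_{N,M}$ has full row rank $N$ almost everywhere, which is Assumption~\ref{ass:rank-4}(i). Distinctness of the points also holds off a null set, namely the diagonals. By Remark~\ref{rem:DMM-invertible}, the corresponding column-rank statement for $D_{M,N}$ follows because multiplication by the invertible diagonal matrix $\mathrm{diag}\big((1+\lambda_k)^{-1}\big)$ preserves rank.
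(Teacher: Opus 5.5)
Your proof is correct. Its overall architecture (interior analyticity, non-vanishing of $F$, measure-zero zero set) matches the paper's; the real difference is in how you establish (b).

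The paper argues (b) by contradiction. If $F\equiv 0$, then every $M$-tuple of row vectors $r(x_j)=(\phi_1(x_j),\dots,\phi_M(x_j))$ is linearly dependent. Hence $\{r(x):x\in\Omega\}$ cannot span $\mathbb{R}^M$, since a spanning set would contain a basis, i.e.\ an $M$-tuple with $F\neq 0$; the paper leaves this step implicit. So some $c\neq 0$ annihilates every $r(x)$, giving $\sum_k c_k\phi_k\equiv 0$, which contradicts linear independence.

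Your induction via Laplace expansion along the last row proves the same fact constructively. It produces the points one at a time, each time choosing $x_{m+1}$ where an explicit combination $\sum_k c_k\phi_k$ with $c_{m+1}\neq 0$ does not vanish. This is essentially the greedy selection the paper later recommends for placement design. The paper's argument is shorter; yours is more explicit and avoids the unargued ``proper subspace'' step.

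Two further points work in your favour. First, you make explicit that (c) needs $\Omega$, hence $\Omega^M$, to be connected; the paper uses this only implicitly, through the standing assumption that $\Omega$ is a connected domain. Second, you actually verify the closing $N\le M$ claim, which the paper leaves as an immediate consequence. The paper's route there is more direct: invertibility of the full $M\times M$ matrix gives linear independence of its first $N$ columns, i.e.\ full row rank of $\Phi_{N,M}$. Your route via the leading $N\times N$ minor additionally shows that only $x_1,\dots,x_N$ need to avoid a null set, whatever the remaining points are.
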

\noindent This provides a genericity mechanism for point actuator/sensor placement. We include the proof for completeness, since we require the specific full-row-rank condition in Assumption~\ref{ass:rank-4}(i) on the first $N$ modes.

\begin{proof}
(a) Each eigenfunction $\phi_k$ is real-analytic in $\Omega$ (since $A$ is
a second-order elliptic operator with smooth coefficients (i.e. constant here) and Neumann boundary
conditions). The entries $\phi_k(x_j)$ of the matrix thus depend
real-analytically on $(x_1,\dots,x_M)$, and the determinant is a polynomial
expression in these entries; hence $F$ is real-analytic.

(b) Suppose, for contradiction, that $F\equiv 0$ on $\Omega^M$. Then for
every $M$-tuple $(x_1,\dots,x_M)\in\Omega^M$ the $M\times M$ matrix
$(\phi_k(x_j))_{j,k=1}^M$ is singular, i.e.\ its rows
\[
r(x_j) := \big(\phi_1(x_j),\dots,\phi_M(x_j)\big)\in\mathbb{R}^M
\]
are linearly dependent. This implies that the set
\[
\mathcal{R} := \{\, r(x) : x\in\Omega \,\} \subset \mathbb{R}^M
\]
is contained in a proper linear subspace of $\mathbb{R}^M$. Hence there
exists a nonzero vector $c=(c_1,\dots,c_M)\in\mathbb{R}^M$ such that
\[
c\cdot r(x) = \sum_{k=1}^M c_k \phi_k(x) = 0,\qquad x\in\Omega.
\]
Thus $\sum_{k=1}^M c_k \phi_k \equiv 0$ in $\Omega$. Since
$\{\phi_1,\dots,\phi_M\}$ are linearly independent in $H$, this forces
$c_k=0$ for all $k$, a contradiction. Therefore $F$ is not identically zero.

(c) The zero set of a nontrivial real-analytic
function has Lebesgue measure zero. Since $F$ is real-analytic and not
identically zero, the set $\mathcal{Z}_M$ has measure zero in $\Omega^M$.
\end{proof}

\begin{remark}[Practical interpretation]
\label{rem:practical-rank}
Proposition~\ref{prop:generic-rank} shows that, for fixed $M$ and domain
$\Omega$, Assumption~\ref{ass:rank-4} is a generic property of the actuator
positions $(x_1,\dots,x_M)$: it fails only on an exceptional set
$\mathcal{Z}_M$ of Lebesgue measure zero. Equivalently, if the points
$x_j$ are chosen ``at random'' in $\Omega$ (according to any absolutely
continuous probability distribution), then the matrix
$\mathcal{B}_{jk}=\phi_k(x_j)$ is invertible with probability one.
\newline
In particular, the rank condition can always be enforced in practice by
perturbing a given configuration of points slightly (as long as the points
remain in the interior of $\Omega$ and do not approach the boundary or
coincide). In simple geometries, such as intervals or rectangles where the
eigenfunctions are explicitly known (e.g.\ trigonometric functions), it is
enough to ensure that:
\begin{itemize}
\item the points $x_j$ do not lie on nodal sets of any of the eigenfunctions
$\phi_k$ with $k\le M$, and
\item the points are not arranged in a way that respects the symmetries of
these nodal sets.
\end{itemize}
For example, on an interval $\Omega=(0,L)$ with Neumann boundary conditions,
the eigenfunctions are $\phi_1\equiv \mathrm{const}$ and
$\phi_k(x)=\cos((k-1)\pi x/L)$ for $k\ge2$. Choosing $M$ distinct points
$x_j\in(0,L)$ that avoid the zeros of these cosine functions and are not
symmetrically paired with respect to the midpoint generically yields an
invertible matrix $\mathcal{B}$.
\end{remark}
\subsubsection*{A concrete condition in 1D (Neumann interval)}
\noindent
We first treat the case of a one-dimensional interval with Neumann boundary
conditions, for which the eigenfunctions are explicitly known.

\begin{lemma}[Invertibility on a 1D Neumann interval via DCT nodes]
\label{lem:rank-1D}
Let $\Omega=(0,L)$ and let $A=\kappa_m\partial_{xx}$ with Neumann boundary
conditions. An orthonormal basis of eigenfunctions of $-A_0$ is given by
\[
\phi_1(x) := \frac{1}{\sqrt{L}},\qquad
\phi_k(x) := \sqrt{\frac{2}{L}}\,\cos\!\Big(\frac{(k-1)\pi x}{L}\Big),
\quad k\ge2.
\]
Fix $M\in\mathbb{N}$ and define actuator positions
\begin{equation}
\label{eq:1D-nodes}
x_j := \frac{(2j-1)L}{2M},\qquad j=1,\dots,M.
\end{equation}
Let $\mathcal{B}\in\mathbb{R}^{M\times M}$ be the matrix
\[
\mathcal{B}_{jk} := \phi_k(x_j),\qquad j,k=1,\dots,M.
\]
Then $\mathcal{B}$ is invertible.
\end{lemma}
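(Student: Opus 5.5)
The plan is to identify $\mathcal{B}$, up to a diagonal rescaling of its columns, with the matrix of the type-II discrete cosine transform (DCT-II) and to prove that it is invertible by exhibiting orthogonality of its columns. Substituting \eqref{eq:1D-nodes} into the eigenfunctions gives
\[
\mathcal{B}_{jk}=c_k\cos\!\Big(\frac{(k-1)(2j-1)\pi}{2M}\Big),\qquad c_1=\tfrac{1}{\sqrt L},\quad c_k=\sqrt{\tfrac{2}{L}}\ (k\ge2).
\]
Hence $\mathcal{B}=\mathcal{C}\,\mathrm{diag}(c_1,\dots,c_M)$, where $\mathcal{C}_{jk}:=\cos\big((k-1)(2j-1)\pi/(2M)\big)$. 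Since every $c_k>0$, it suffices to show that $\mathcal{C}$ is invertible.

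I will show that the columns of $\mathcal{C}$ are mutually orthogonal and nonzero, so that $\mathcal{C}^\top\mathcal{C}$ is a positive diagonal matrix. Take indices $p,q\in\{0,\dots,M-1\}$ (corresponding to $p=k-1$, $q=k'-1$). Using the product-to-sum formula, the inner product of two columns becomes
\[
\sum_{j=1}^M\cos(p\theta_j)\cos(q\theta_j)=\tfrac12\sum_{j=1}^M\big[\cos((p-q)\theta_j)+\cos((p+q)\theta_j)\big],\qquad \theta_j=\frac{(2j-1)\pi}{2M}.
\]

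The key step is the identity
\[
S(m):=\sum_{j=1}^M\cos\!\Big(\frac{m(2j-1)\pi}{2M}\Big)=0\qquad\text{for integers }0<|m|<2M,
\]
together with the trivial value $S(0)=M$. To prove the identity, write $S(m)$ as the real part of the geometric sum
\[
e^{i m\pi/(2M)}\sum_{j=0}^{M-1}e^{i m\pi j/M}=e^{i m\pi/(2M)}\,\frac{1-e^{i m\pi}}{1-e^{i m\pi/M}}.
\]
The denominator is nonzero because $0<|m|<2M$. If $m$ is even, the numerator $1-e^{im\pi}$ vanishes, so the sum is $0$. If $m$ is odd, the expression simplifies to $2e^{i m\pi/(2M)}/(1-e^{i m\pi/M})=i/\sin(m\pi/(2M))$, which is purely imaginary, so its real part is $0$. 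Checking that this cancellation really occurs for both parities is the only delicate point.

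Now consider the two terms in the column inner product. Because $0\le p,q\le M-1$, we have $|p-q|\le M-1$ and $p+q\le 2M-2<2M$, so the identity applies to both. For $p\ne q$, both $p-q$ and $p+q$ are nonzero, so the inner product vanishes. For $p=q\ge1$, the inner product equals $\tfrac12(M+0)=M/2$. For $p=q=0$, it equals $M$. Therefore
\[
\mathcal{C}^\top\mathcal{C}=\mathrm{diag}(M,M/2,\dots,M/2),
\]
which is invertible, and so $\mathcal{C}$, and hence $\mathcal{B}$, is invertible.

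The same computation gives a quantitative by-product. Since $\mathcal{B}^\top\mathcal{B}=\tfrac{M}{L}I$, the matrix $\sqrt{L/M}\,\mathcal{B}$ is orthogonal. This yields explicit conditioning bounds of the kind used in Remark~\ref{rem:small-gain-TN}.
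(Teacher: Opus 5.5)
Your proof is correct and follows the same route as the paper. Both arguments factor out the normalizing constants to reduce $\mathcal{B}$ to the type-II DCT cosine matrix and then use the orthogonality of its columns; the paper cites that orthogonality from the literature, whereas you prove it directly with the geometric-sum identity for $S(m)$, which also yields the exact relation $\mathcal{B}^\top\mathcal{B}=\tfrac{M}{L}I$.
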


\begin{proof}
By the explicit formulas for $\phi_k$ and \eqref{eq:1D-nodes}, we have
\[
\phi_1(x_j) = \frac{1}{\sqrt{L}},\qquad
\phi_k(x_j) = \sqrt{\frac{2}{L}}\,
\cos\!\Big(\frac{(k-1)\pi(2j-1)}{2M}\Big),\quad k\ge2.
\]
Factor out the (nonzero) normalizing constants row- and column-wise. Up to
multiplication by invertible diagonal matrices on the left and right,
$\mathcal{B}$ is equivalent to the matrix $C\in\mathbb{R}^{M\times M}$ with
entries
\[
C_{jk} := \cos\!\Big(\frac{(k-1)\pi(2j-1)}{2M}\Big),
\qquad j,k=1,\dots,M.
\]
The matrix $C$ is (up to normalization) the discrete cosine transform of type
II (DCT-II), known to be orthogonal (after an appropriate scaling of the first
column), see \cite{StrangBanks} for instance. In particular, $\det C\neq0$. Since multiplication by invertible
diagonal matrices does not change rank, we conclude that $\mathcal{B}$ is
invertible.
\end{proof}


\subsubsection*{A concrete condition in 3D for a Neumann cube}
\noindent
We now give an explicit sufficient condition for a three-dimensional cube,
using tensor products of the 1D construction.

\begin{lemma}[Invertibility on a 3D Neumann cube via tensor-product DCT grid]
\label{lem:rank-3D}
Let $\Omega=(0,L_1)\times(0,L_2)\times(0,L_3)$ and let
$A=\kappa_m\Delta$ with Neumann boundary conditions on $\partial\Omega$.
An orthonormal basis of eigenfunctions of $-A_0$ is given (up to
normalization) by
\[
\phi_{\mathbf{n}}(x)
= c_{\mathbf{n}}\,
\cos\!\Big(\frac{n_1\pi x_1}{L_1}\Big)
\cos\!\Big(\frac{n_2\pi x_2}{L_2}\Big)
\cos\!\Big(\frac{n_3\pi x_3}{L_3}\Big),
\]
where $\mathbf{n}=(n_1,n_2,n_3)\in\mathbb{N}_0^3$ and $c_{\mathbf{n}}>0$ are
normalizing constants.
\newline
Fix integers $N_1,N_2,N_3\ge0$ and set
\[
I := \{\,\mathbf{n}=(n_1,n_2,n_3)\in\mathbb{N}_0^3 : 0\le n_\ell\le N_\ell,
\ \ell=1,2,3 \,\}.
\]
Let
\[
H_M := \mathrm{span}\{\phi_{\mathbf{n}} : \mathbf{n}\in I\},
\quad M:=(N_1+1)(N_2+1)(N_3+1).
\]
Index $H_M$ by an enumeration $\{\phi_k\}_{k=1}^M$ of $\{\phi_{\mathbf{n}} :
\mathbf{n}\in I\}$ (the order is irrelevant for rank). For each direction
$\ell=1,2,3$ define
\[
x^{(\ell)}_{j_\ell} := \frac{(2j_\ell-1)L_\ell}{2(N_\ell+1)},
\qquad j_\ell=1,\dots,N_\ell+1,
\]
and form the tensor-product grid
\[
x_{\mathbf{j}}
:= \big(x^{(1)}_{j_1},x^{(2)}_{j_2},x^{(3)}_{j_3}\big),
\qquad
\mathbf{j}=(j_1,j_2,j_3)\in J,
\]
where $J:=\{1,\dots,N_1+1\}\times\{1,\dots,N_2+1\}\times\{1,\dots,N_3+1\}$
and $|J|=M$.
\newline
Let $\mathcal{B}\in\mathbb{R}^{M\times M}$ be the matrix
\[
\mathcal{B}_{\mathbf{j},\mathbf{n}} := \phi_{\mathbf{n}}(x_{\mathbf{j}}),
\qquad \mathbf{j}\in J,\ \mathbf{n}\in I,
\]
and identify $\mathcal{B}$ with an $M\times M$ matrix after any fixed
enumeration of $J$ and $I$. Then $\mathcal{B}$ is invertible.
\end{lemma}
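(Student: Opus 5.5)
The plan is to show that $\mathcal{B}$ is, up to invertible diagonal scalings, the Kronecker product of three one-dimensional matrices, each of which is invertible by Lemma~\ref{lem:rank-1D}.

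\emph{Step 1: factorization.} For $\mathbf{j}\in J$ and $\mathbf{n}\in I$ we have
\[
\phi_{\mathbf{n}}(x_{\mathbf{j}})=c_{\mathbf{n}}\prod_{\ell=1}^3 C^{(\ell)}_{j_\ell,n_\ell},
\qquad
C^{(\ell)}_{j,n}:=\cos\!\Big(\frac{n\pi(2j-1)}{2(N_\ell+1)}\Big),
\]
with $1\le j\le N_\ell+1$ and $0\le n\le N_\ell$. Order $J$ and $I$ lexicographically, which is allowed because reordering rows and columns does not change rank. Then the relation reads
\[
\mathcal{B}=\big(C^{(1)}\otimes C^{(2)}\otimes C^{(3)}\big)\,\mathrm{diag}(c_{\mathbf{n}})_{\mathbf{n}\in I}.
\]

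\emph{Step 2: each factor is invertible.} The matrix $C^{(\ell)}$ is exactly the DCT-II matrix appearing in the proof of Lemma~\ref{lem:rank-1D}. To see this, take $M=N_\ell+1$ and $L=L_\ell$, and relabel the column index by $k=n+1$. That lemma therefore gives $\det C^{(\ell)}\neq0$.

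\emph{Step 3: conclusion.} We use the determinant formula for Kronecker products,
\[
\det(A\otimes B)=(\det A)^{q}(\det B)^{p}
\]
for $A\in\mathbb{R}^{p\times p}$ and $B\in\mathbb{R}^{q\times q}$. Alternatively, invertibility follows from $(A\otimes B)^{-1}=A^{-1}\otimes B^{-1}$. Either way, the Kronecker product is invertible. Since every $c_{\mathbf{n}}>0$, the diagonal factor is invertible too, and hence $\mathcal{B}$ is invertible.

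The main point needing care is Step 1: the row index $\mathbf{j}$ and the column index $\mathbf{n}$ must be enumerated compatibly, so that the entries line up as a Kronecker product. Once both are ordered lexicographically, this is a direct entrywise check. The remaining steps are routine.
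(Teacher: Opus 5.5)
Your proposal is correct and follows the paper's proof essentially step for step: you factor $\phi_{\mathbf n}(x_{\mathbf j})$ into 1D cosine matrices, identify each $C^{(\ell)}$ with the DCT-II matrix of Lemma~\ref{lem:rank-1D}, and conclude from the invertibility of Kronecker products and of the positive diagonal normalization. Your placement of $\mathrm{diag}(c_{\mathbf n})$ on the right is slightly more accurate than the paper's left factor $D$, since $c_{\mathbf n}$ scales columns, though this has no effect on invertibility.
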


\begin{proof}
By the tensor-product structure of the eigenfunctions and the grid, we have
\[
\phi_{\mathbf{n}}(x_{\mathbf{j}})
= c_{\mathbf{n}}\,
\prod_{\ell=1}^3 \cos\!\Big(\frac{n_\ell\pi x^{(\ell)}_{j_\ell}}{L_\ell}\Big).
\]
Define, for each $\ell=1,2,3$, the 1D matrices
\[
C^{(\ell)}_{j_\ell,n_\ell}
:= \cos\!\Big(\frac{n_\ell\pi x^{(\ell)}_{j_\ell}}{L_\ell}\Big),
\qquad j_\ell=1,\dots,N_\ell+1,\quad n_\ell=0,\dots,N_\ell.
\]
By the same argument as in Lemma~\ref{lem:rank-1D} (with $M$ replaced by
$N_\ell+1$), each $C^{(\ell)}$ is, up to normalization, a DCT-II matrix and
hence invertible.
\newline
Up to multiplication by nonzero diagonal matrices corresponding to the
normalization constants $c_{\mathbf{n}}$, the full matrix $\mathcal{B}$ can
be written as a (permuted) Kronecker product
\[
\mathcal{B} = D \cdot\big(C^{(1)}\otimes C^{(2)}\otimes C^{(3)}\big),
\]
where $D$ is a diagonal matrix with strictly positive entries and
$\otimes$ denotes the Kronecker product. Since each $C^{(\ell)}$ is
invertible and the Kronecker product of invertible matrices is invertible,
$C^{(1)}\otimes C^{(2)}\otimes C^{(3)}$ is invertible. Multiplication by
the diagonal matrix $D$ does not affect invertibility. Therefore
$\mathcal{B}$ is invertible. In particular, any selection of $N\le M$ rows yields an $N\times M$ matrix with full row rank~$N$.
\end{proof}

\subsection{Verification of Assumption~\ref{assump:KRW-act} on $C^{1,1}$ domains}
\label{subsec:app-KRW-act-C11}
\noindent
This subsection provides a self-contained verification of the actuator hypothesis
(Assumption~\ref{assump:KRW-act}) for Dirac actuators on smooth domains.
The only part of Assumption~\ref{assump:KRW-act} that is geometric is the divergence
$\xi_M^+\to+\infty$ of the nullspace coercivity constant.
In \cite{KunRodWal24-cocv} the authors verify this in rectangles/cubes
by a self-similar decomposition argument. Here we replace that step by a standard
finite-element interpolation estimate on shape-regular quasi-uniform meshes, which works on general
$C^{1,1}$ domains (hence extending the class of box/polyhedral domains  in \cite{KunRodWal24-cocv} to a class of domains admitting a suitable meshing property; see~\eqref{eq:app-shapereg}).

\medskip

\noindent
Throughout, $\Omega\subset\R^d$ is bounded of class $C^{1,1}$ with $d\in\{1,2,3\}$, $A_0=\kappa_m\Delta$ with
homogeneous Neumann boundary condition, and $\mathcal A=I-A_0=I-\kappa_m\Delta$ as in the main text.
We recall that $D(\mathcal A)=\{w\in H^2(\Omega):\partial_\nu w=0\text{ on }\partial\Omega\}$ and
$V=H^1(\Omega)$.

\begin{remark}[Actuator points on $\overline\Omega$] 
\label{rem:app-KRW-closure}
In the main text we fix actuator locations $x_j\in\Omega$ for simplicity. The functional-analytic framework used
throughout only requires $\delta_x\in D(\mathcal A)'$, which holds for every $x\in\overline\Omega$ because
$D(\mathcal A)\hookrightarrow C(\overline\Omega)$ (see Proposition~\ref{prop:B-bounded-DA-dual}).
Therefore, in the verification below we may select actuator points among the vertices of a triangulation, which may
include boundary vertices. This does not affect Assumption~\ref{assump:KRW-act} nor the stabilization statement.
\end{remark}

\begin{lemma}[Elliptic graph norm equivalence]
\label{lem:app-graphnorm-H2}
There exist constants $c_\mathcal{A},C_\mathcal{A}>0$ depending only on $\Omega$ and $\kappa_m$ such that
\begin{equation}
\label{eq:app-graphnorm-H2}
c_\mathcal{A}\,\|w\|_{H^2(\Omega)} \le \|w\|_{D(\mathcal A)} \le C_\mathcal{A}\,\|w\|_{H^2(\Omega)}
\qquad \forall w\in D(\mathcal A).
\end{equation}
In particular, point evaluation $w\mapsto w(x)$ is continuous on $D(\mathcal A)$ for every $x\in\overline\Omega$.
\end{lemma}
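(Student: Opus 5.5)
The plan is to take $\|w\|_{D(\mathcal A)}:=\|\mathcal A w\|_H=\|w-\kappa_m\Delta w\|_{L^2(\Omega)}$ as the graph norm, which is equivalent to $\|w\|_H+\|A_0w\|_H$. The two inequalities in \eqref{eq:app-graphnorm-H2} are then proved separately.

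\textbf{Upper bound.} This is immediate. For $w\in D(\mathcal A)\subset H^2(\Omega)$ we have
\[
\|w-\kappa_m\Delta w\|_{L^2}\le \|w\|_{L^2}+\kappa_m\sqrt d\,\|D^2w\|_{L^2}\le (1+\kappa_m\sqrt d)\|w\|_{H^2},
\]
so one may take $C_\mathcal A=1+\kappa_m\sqrt d$.

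\textbf{Lower bound, energy step.} Let $f:=\mathcal A w\in L^2(\Omega)$ and pair with $w$. Since $\partial_\nu w=0$, Green's formula gives
\[
\|w\|_{L^2}^2+\kappa_m\|\nabla w\|_{L^2}^2=(f,w)\le\|f\|_{L^2}\|w\|_{L^2}.
\]
This yields $\|w\|_{H^1}\le \max(1,\kappa_m^{-1/2})\|f\|_{L^2}$. Consequently $w$ is the unique weak solution in $H^1(\Omega)$ of the Neumann problem
\[
-\kappa_m\Delta w+w=f \ \text{ in }\Omega,\qquad \partial_\nu w=0\ \text{ on }\partial\Omega.
\]

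\textbf{Lower bound, regularity step.} This is the main point. We invoke the $H^2$-regularity theorem for the Neumann problem on bounded $C^{1,1}$ domains (Grisvard, \emph{Elliptic Problems in Nonsmooth Domains}, Thm.~2.4.2.7 and Thm.~3.1.3.1; for $C^2$ domains the classical Agmon--Douglis--Nirenberg theory also suffices). It gives
\[
\|w\|_{H^2}\le C(\|f\|_{L^2}+\|w\|_{H^1}).
\]
Combining this with the energy step gives $\|w\|_{H^2}\le C'\|\mathcal Aw\|_{L^2}$, that is, $c_\mathcal A=1/C'$.

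\textbf{Where the difficulty lies.} The only nontrivial step is that the $H^2$ estimate holds up to the boundary under merely $C^{1,1}$ regularity. The standard proof flattens the boundary with $C^{1,1}$ charts and takes difference quotients in the tangential directions. It then recovers the normal second derivative from the equation. A $C^{1,1}$ chart has Lipschitz first derivatives, and that is exactly enough for the transformed coefficients to be Lipschitz, as the difference-quotient argument requires. This is why the class $C^{1,1}$ is sharp for the argument, and we will cite the result rather than reprove it. The constants depend only on $\Omega$ and $\kappa_m$.

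\textbf{Point evaluation.} In $d\le3$ the Sobolev embedding $H^2(\Omega)\hookrightarrow C(\overline\Omega)$ holds on Lipschitz, hence on $C^{1,1}$, domains, because $2>d/2$. Together with the lower bound this gives
\[
|w(x)|\le C_{\rm emb}\,c_\mathcal A^{-1}\|w\|_{D(\mathcal A)}\qquad\text{for every }x\in\overline\Omega.
\]
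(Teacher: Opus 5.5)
Your proof is correct and follows the same route as the paper: the upper bound comes from the triangle inequality, the lower bound from $H^2$-regularity of the Neumann problem for $I-\kappa_m\Delta$ on $C^{1,1}$ domains, and continuity of point evaluation from $H^2(\Omega)\hookrightarrow C(\overline\Omega)$ for $d\le 3$. Your explicit energy estimate, used to absorb the lower-order $H^1$ term, simply unpacks the bound $\|w\|_{H^2}\le C\|\mathcal A w\|_{L^2}$ that the paper quotes directly from the regularity theory.
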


\begin{proof}
The upper bound in \eqref{eq:app-graphnorm-H2} follows from $\mathcal A w=w-\kappa_m\Delta w$ and the estimate
$\|w\|_{L^2}+\|\Delta w\|_{L^2}\lesssim\|w\|_{H^2}$. For the reverse inequality, let $f:=\mathcal A w\in L^2(\Omega)$.
Then $w$ solves the Neumann problem $(I-\kappa_m\Delta)w=f$.
On bounded $C^{1,1}$ domains, $H^2$-regularity holds for this uniformly elliptic operator with Neumann boundary condition,
so $\|w\|_{H^2}\le C\|f\|_{L^2}=C\|w\|_{D(\mathcal A)}$.
The continuity of point evaluation follows from the embedding $H^2(\Omega)\hookrightarrow C(\overline\Omega)$ for $d\le3$.
\end{proof}

\medskip
\noindent\textbf{Choice of actuator families.}
Let $\{\mathcal T_h\}_{h\downarrow 0}$ be a family of shape-regular, quasi-uniform simplicial triangulations of $\Omega$.
Denote by $\mathcal N_h=\{x_{h,1},\dots,x_{h,M(h)}\}\subset\overline\Omega$ the set of all mesh vertices.
For each $h$ (equivalently, for each $M:=M(h)$), define Dirac actuators
\begin{equation}
\label{eq:app-actuators-dirac}
d_{M,j}:=\delta_{x_{h,j}}\in D(\mathcal A)',\qquad j=1,\dots,M.
\end{equation}
To produce biorthogonal test functions in $D(\mathcal A)$, we use Riesz representatives.

\begin{lemma}[Biorthogonal family in $D(\mathcal A)$]
\label{lem:app-biorthogonal}
For each $M$ and each $j\in\{1,\dots,M\}$ there exists $\Psi_{M,j}\in D(\mathcal A)$ such that
\begin{equation}
\label{eq:app-biorthogonality}
\langle d_{M,i},\Psi_{M,j}\rangle = \Psi_{M,j}(x_{h,i}) = \delta_{ij},\qquad i,j=1,\dots,M.
\end{equation}
Consequently, $\dim\mathrm{span}\{d_{M,j}\}_{j=1}^M=\dim\mathrm{span}\{\Psi_{M,j}\}_{j=1}^M=M$.
\end{lemma}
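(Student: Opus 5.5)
The plan is to exhibit the biorthogonal functions through a Gram-matrix construction based on Riesz representatives in the Hilbert space $D(\mathcal A)$, equipped with the graph inner product $(v,w)_{D(\mathcal A)}:=(\mathcal A v,\mathcal A w)_H$. By Lemma~\ref{lem:app-graphnorm-H2} each point evaluation $w\mapsto w(x_{h,i})$ is a bounded functional on $D(\mathcal A)$. The Riesz theorem therefore gives $R_i\in D(\mathcal A)$ with
\[
(R_i,w)_{D(\mathcal A)}=w(x_{h,i})\qquad\text{for all } w\in D(\mathcal A).
\]
Concretely, $R_i=\mathcal A^{-2}\delta_{x_{h,i}}$, interpreted through duality. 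Next I form the Gram matrix $G_{ik}:=(R_i,R_k)_{D(\mathcal A)}=R_k(x_{h,i})$, which is symmetric and positive semidefinite. I then set
\[
\Psi_{M,j}:=\sum_{k}(G^{-1})_{kj}R_k .
\]
With this choice, $\Psi_{M,j}(x_{h,i})=\sum_k G_{ik}(G^{-1})_{kj}=\delta_{ij}$, provided $G$ is invertible.

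The key step, and the only real obstacle, is the invertibility of $G$. I will reduce it to linear independence of the Dirac functionals on $D(\mathcal A)$. Suppose $c^\top Gc=\|\sum_k c_kR_k\|_{D(\mathcal A)}^2=0$. Then $\sum_k c_k w(x_{h,k})=0$ for every $w\in D(\mathcal A)$. To conclude $c=0$, it suffices to produce, for each fixed $k_0$, some $w\in D(\mathcal A)$ with $w(x_{h,k_0})=1$ and $w(x_{h,k})=0$ for $k\neq k_0$. Since the vertices are distinct, take a small radius $r<\tfrac12\min_{k\ne k_0}|x_{h,k}-x_{h,k_0}|$ and a cutoff $\chi\in C_c^\infty(B(x_{h,k_0},r))$ with $\chi(x_{h,k_0})=1$. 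If $x_{h,k_0}$ is an interior point and $r<\mathrm{dist}(x_{h,k_0},\partial\Omega)$, then $w=\chi|_\Omega$ has $\partial_\nu w=0$ trivially and lies in $H^2$, hence in $D(\mathcal A)$.

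For boundary vertices (allowed by Remark~\ref{rem:app-KRW-closure}) the Neumann condition has to be enforced, and this is the delicate point. I would flatten $\partial\Omega$ locally with a $C^{1,1}$ chart $\Theta$ and take $w=\tilde\chi\circ\Theta$, where $\tilde\chi$ is even in the normal variable; the issue is that a $C^{1,1}$ chart does not map conormal derivatives exactly. A cleaner alternative, which I would try first, avoids charts altogether. Pick any $g\in H^2(\Omega)$ with the right values, for instance the cutoff $\chi$ above, and correct it by solving the Neumann problem
\[
(I-\kappa_m\Delta)v=0,\qquad \partial_\nu v=\partial_\nu\chi\ \text{ on }\partial\Omega,
\]
with $v\in H^2(\Omega)$ by $C^{1,1}$ regularity, since $\partial_\nu\chi\in H^{1/2}(\partial\Omega)$. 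The function $\chi-v$ then lies in $D(\mathcal A)$ but has perturbed nodal values. The fix is a surjectivity argument: the evaluation map $D(\mathcal A)\to\R^M$ has a range that is a subspace, and if the range were proper, some nonzero $c$ would annihilate all of $D(\mathcal A)$. Since $D(\mathcal A)$ is dense in $H^2(\Omega)$ (it contains $C_c^\infty(\Omega)$ plus the corrections), and point evaluation is continuous on $H^2(\Omega)$, $c$ would also annihilate all of $H^2(\Omega)$, where the cutoffs $\chi$ show $c=0$. So the main work I anticipate is justifying the density of $D(\mathcal A)$ in $H^2(\Omega)$.

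If that density proves awkward, I would fall back on the interior construction, using that every $w\in H^2(\Omega)$ is an $H^2$-limit of functions in $D(\mathcal A)$ obtained by subtracting thin boundary-layer correctors.

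Once $G$ is invertible, biorthogonality~\eqref{eq:app-biorthogonality} holds by construction. The dimension statements follow directly. If $\sum_i a_i\Psi_{M,i}=0$, evaluating at $x_{h,j}$ gives $a_j=0$. If $\sum_j b_jd_{M,j}=0$, pairing with $\Psi_{M,i}$ gives $b_i=0$. Hence both spans have dimension $M$.
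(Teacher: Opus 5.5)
Your construction (Riesz representatives for the inner product $(\mathcal A v,\mathcal A w)_H$, the Gram matrix, and $\Psi_{M,j}$ as the $G^{-1}$-combination) is exactly the paper's. You are right that invertibility of $G$ is the one substantive step. The paper goes from $c^\top G_M c=\|\sum_j c_j r_{M,j}\|_{D(\mathcal A)}^2$ straight to ``positive definite''. That identity only gives semidefiniteness unless the point evaluations $w\mapsto w(x_{h,j})$ are linearly independent on $D(\mathcal A)$. Your interior-cutoff argument settles this when all nodes lie in $\Omega$.

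The boundary-vertex part, which is needed here because mesh vertices may lie on $\partial\Omega$, does not work as written. On a $C^{1,1}$ domain the normal trace $w\mapsto\partial_\nu w$ is continuous from $H^2(\Omega)$ into $H^{1/2}(\partial\Omega)$. So $D(\mathcal A)$ is its kernel, a \emph{closed, proper} subspace of $H^2(\Omega)$; for example, $x\mapsto x_1$ is not in its closure. Hence $D(\mathcal A)$ is not dense in $H^2(\Omega)$. Your fallback fails for the same reason: boundary-layer correctors that fix a nonzero $\partial_\nu w$ can never tend to zero in $H^2$. The Neumann corrector $v$ with $(I-\kappa_m\Delta)v=0$ does not help by itself either, since it changes the boundary values of $\chi$, which are exactly the nodal values you need to control.

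A correct way to finish is to correct $\chi$ with a lifting that prescribes \emph{both} traces.
\begin{itemize}
\item By the trace theorem on $C^{1,1}$ domains (e.g.\ Grisvard, Thm.~1.5.1.2), $u\mapsto(u|_{\partial\Omega},\partial_\nu u)$ maps $H^2(\Omega)$ onto $H^{3/2}(\partial\Omega)\times H^{1/2}(\partial\Omega)$. Hence there is $\ell\in H^2(\Omega)$ with $\ell|_{\partial\Omega}=0$ and $\partial_\nu\ell=\partial_\nu\chi$.
\item Take $\chi\in C_c^\infty(B(x_{h,k_0},r))$ with $\chi(x_{h,k_0})=1$, and a smooth cutoff $\theta$ equal to $1$ on $B(x_{h,k_0},r)$ and supported in $B(x_{h,k_0},2r)$, where $2r<\min_{k\neq k_0}|x_{h,k}-x_{h,k_0}|$.
\item Set $w:=\chi-\theta\ell$. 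Since $\ell=0$ on $\partial\Omega$, $\partial_\nu(\theta\ell)=\theta\,\partial_\nu\chi=\partial_\nu\chi$ there, so $w\in D(\mathcal A)$.
\item Moreover $w(x_{h,k_0})=1$ because $x_{h,k_0}\in\partial\Omega$, and $w(x_{h,k})=0$ for $k\neq k_0$ by the support condition.
\end{itemize}
Equivalently, your annihilator argument is sound if you run it in $C(\overline\Omega)$ with the sup norm instead of $H^2$. Point evaluations at boundary points remain continuous there. The correctors $\theta_\varepsilon\ell$, with $\theta_\varepsilon\equiv1$ near $\partial\Omega$ and supported in an $\varepsilon$-layer, do tend to $0$ uniformly, though not in $H^2$. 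So $D(\mathcal A)$ is dense in $C(\overline\Omega)$. Either way, $G$ is positive definite and the rest of your proof goes through.
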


\begin{proof}
By Lemma~\ref{lem:app-graphnorm-H2}, each evaluation functional $d_{M,j}$ is continuous on the Hilbert space
$D(\mathcal A)$ equipped with the inner product $(u,v)_{D(\mathcal A)}:=(\mathcal A u,\mathcal A v)_H$.
Hence, by the Riesz representation theorem, there exists a unique $r_{M,j}\in D(\mathcal A)$ such that
$\langle d_{M,j},w\rangle=(w,r_{M,j})_{D(\mathcal A)}$ for all $w\in D(\mathcal A)$.
Define the Gram matrix $G_M\in\R^{M\times M}$ by
\[
(G_M)_{ij}:=\langle d_{M,i},r_{M,j}\rangle = (r_{M,i},r_{M,j})_{D(\mathcal A)}.
\]
For any $c\in\R^M$, we have $c^\top G_M c=\big\|\sum_{j=1}^M c_j r_{M,j}\big\|_{D(\mathcal A)}^2$, so $G_M$ is symmetric
positive definite and thus invertible. Now set
\[
\Psi_{M,j} := \sum_{\ell=1}^M (G_M^{-1})_{j\ell}\, r_{M,\ell}\in D(\mathcal A).
\]
Then, for every $i$,
\[
\langle d_{M,i},\Psi_{M,j}\rangle
=\sum_{\ell=1}^M (G_M^{-1})_{j\ell}\,\langle d_{M,i},r_{M,\ell}\rangle
=\sum_{\ell=1}^M (G_M^{-1})_{j\ell}\,(G_M)_{i\ell} = \delta_{ij},
\]
which is \eqref{eq:app-biorthogonality}. The dimension claims follow immediately from biorthogonality.
\end{proof}

\medskip
\noindent\textbf{Divergence of the nullspace coercivity constant.}
Let $V_h\subset H^1(\Omega)$ be the standard piecewise-affine ($P_1$) finite element space associated with $\mathcal T_h$,
and let $I_h:H^2(\Omega)\to V_h$ be the nodal Lagrange interpolant.

\begin{lemma}[Nodal interpolation estimate for polyhedral domains]
\label{lem:app-interp}
There exists a constant $C_I>0$, independent of $h$, such that for all $w\in H^2(\Omega)$,
\begin{equation}
\label{eq:app-interp}
\|w-I_h w\|_{H^1(\Omega)} \le C_I\, h\,\|w\|_{H^2(\Omega)}.
\end{equation}
\end{lemma}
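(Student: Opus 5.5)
The plan is the classical Bramble--Hilbert/scaling argument of finite-element theory: prove the estimate element by element on each simplex $K\in\mathcal T_h$, then sum over $K$. Since $d\le 3$, we have $H^2(\Omega)\hookrightarrow C(\overline\Omega)$ (as already used in Lemma~\ref{lem:app-graphnorm-H2}). Hence nodal values $w(x_{h,i})$ are well defined and $I_h w\in V_h$ makes sense; on each $K$, $I_hw|_K$ is the affine interpolant of $w$ at the $d+1$ vertices of $K$.

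\textbf{Step 1 (reference element).} Let $\hat K$ be the reference simplex and $\hat I$ the affine vertex interpolant. The map $\hat I:H^2(\hat K)\to P_1(\hat K)\subset H^1(\hat K)$ is bounded, again because $H^2(\hat K)\hookrightarrow C(\overline{\hat K})$, and it reproduces $P_1$. The operator $\hat w\mapsto \hat w-\hat I\hat w$ therefore vanishes on $P_1$. By the Bramble--Hilbert lemma (Deny--Lions), for $m=0,1$,
\[
|\hat w-\hat I\hat w|_{H^m(\hat K)}\le \hat C\,|\hat w|_{H^2(\hat K)} .
\]

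\textbf{Step 2 (scaling).} For $K=F_K(\hat K)$ with $F_K\hat x=B_K\hat x+b_K$, interpolation commutes with pull-back: $\widehat{I_hw}=\hat I\hat w$. The standard change-of-variables bounds give
\[
|v|_{H^m(K)}\le C\|B_K^{-1}\|^m|\det B_K|^{1/2}|\hat v|_{H^m(\hat K)},\qquad
|\hat v|_{H^2(\hat K)}\le C\|B_K\|^2|\det B_K|^{-1/2}|v|_{H^2(K)} .
\]
Shape regularity gives $\|B_K\|\lesssim h_K$ and $\|B_K^{-1}\|\lesssim \rho_K^{-1}\lesssim h_K^{-1}$. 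Combining these bounds with Step~1 yields
\[
\|w-I_hw\|_{L^2(K)}+h_K|w-I_hw|_{H^1(K)}\le C\,h_K^2\,|w|_{H^2(K)} .
\]
Here $C$ depends only on the shape-regularity constant. Squaring, summing over $K$, and using $h_K\le h\le C$ (quasi-uniformity is not even needed for this estimate) gives \eqref{eq:app-interp}, since $h^2\lesssim h$ for bounded $h$.

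\textbf{Main obstacle: the curved boundary.} The argument above assumes that $\bigcup_{K\in\mathcal T_h}\overline K=\overline\Omega$, which holds exactly only for polyhedral $\Omega$. For a genuine $C^{1,1}$ domain I would first try to absorb this into the meshing hypothesis \eqref{eq:app-shapereg}: triangulations of a polyhedral $\Omega_h\supseteq\Omega$ whose vertices lie in $\overline\Omega$, or whose boundary simplices cover $\Omega$. The treatment then goes as follows.
\begin{enumerate}
\item Since a $C^{1,1}$ domain is Lipschitz, apply Stein's extension operator $E:H^2(\Omega)\to H^2(\R^d)$ with $\|Ew\|_{H^2(\R^d)}\le C_E\|w\|_{H^2(\Omega)}$.
\item Interpolate $Ew$ on the simplices covering $\Omega$. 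Nodal values at vertices in $\overline\Omega$ coincide with those of $w$, so restricting to $\Omega$ recovers $I_hw$.
\item Apply the local estimate of Step~2 on each such simplex and sum; this bounds the error by $Ch\|Ew\|_{H^2(\Omega_h)}\le C\,C_E\,h\,\|w\|_{H^2(\Omega)}$.
\end{enumerate}
If instead the mesh is only assumed to satisfy $\Omega_h\subset\Omega$, the leftover boundary skin has width $O(h^2)$. In that case I would handle it by a separate trace/Poincar\'e-type bound in the skin, which I expect to be the delicate point; this is why I would first try to rely on the covering form of \eqref{eq:app-shapereg}.
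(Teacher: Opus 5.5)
Your Steps 1--2 are correct and are exactly the standard Bramble--Hilbert plus affine-scaling argument behind Brenner--Scott's Theorem~4.4.20, which is all the paper invokes for this polyhedral statement (and you are right that shape regularity alone suffices there, without quasi-uniformity). The curved-boundary part is not needed here, since the paper treats $C^{1,1}$ domains separately via boundary-fitted isoparametric meshes with the $D^2F_K$ bound in \eqref{eq:app-shapereg}; note also that your first variant cannot work, because closed simplices with all vertices in $\overline\Omega$ cannot cover a strictly convex curved piece of $\partial\Omega$ (for small $h$ every such boundary point would have to be a vertex).
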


\begin{proof}
This is the classical $P_1$ interpolation estimate on shape-regular quasi-uniform meshes; see Theorem $4.4.20$ in \cite{BrennerScottFEM} for polyhedral domains.
\end{proof}


\medskip
 \noindent To deal with more general domains as polyhedral ones, we take $\Omega\subset\R^3$ to be a bounded $C^{1,1}$ domain and work with a boundary-fitted isoparametric mesh family (see, e.g., \cite{Lenoir1986,CiarletRaviart1972, AzouaniTiti14,LunasinTiti17,EggerFritzKunRod25})
$\{\mathcal T_h\}_{h\downarrow 0}$: each element $K\in\mathcal T_h$ is the image of a fixed reference simplex
$\widehat K$ under a bijective map $F_K\in W^{2,\infty}(\widehat K;\R^3)$.
We assume uniform (isoparametric) shape-regularity and quasi-uniformity: there exist constants $\gamma,c_0>0$,
independent of $h$ and $K$, such that
\begin{equation}\label{eq:app-shapereg}
\|DF_K\|_{L^\infty(\widehat K)}\le \gamma h_K,\qquad
\|(DF_K)^{-1}\|_{L^\infty(\widehat K)}\le \gamma h_K^{-1},\qquad
\|D^2F_K\|_{L^\infty(\widehat K)}\le \gamma h_K,
\end{equation}
and $\det DF_K \ge c_0\, h_K^{3}$ a.e. on $\widehat K$.
Moreover, $h\simeq h_K$ for all $K\in\mathcal T_h$, and $\Omega=\bigcup_{K\in\mathcal T_h}K$.

\medskip
\noindent\emph{Parametrizable domains.}
The framework above also applies if $\Omega$ is given as the image of a reference domain.
More precisely, assume there exist a bounded reference domain $\widehat\Omega\subset\R^3$ admitting a
shape-regular simplicial mesh family $\{\widehat{\mathcal T}_h\}_{h\downarrow 0}$ with
$\widehat\Omega=\bigcup_{\widehat K\in\widehat{\mathcal T}_h}\widehat K$, and a bi-Lipschitz map
$\Phi:\widehat\Omega\to\Omega$ with $\Phi\in W^{2,\infty}(\widehat\Omega;\R^3)$ and $\Phi^{-1}\in W^{1,\infty}(\Omega;\R^3)$.
Then, pushing forward the reference mesh by $\Phi$ yields an isoparametric family (cf.\ the standard pushforward construction in \cite{CiarletRaviart1972,Zlamal1973})
$\mathcal T_h:=\{\Phi(\widehat K):\widehat K\in\widehat{\mathcal T}_h\}$ satisfying the above
uniform bounds (possibly with a different constant $\gamma$) and, in particular, $\Omega=\bigcup_{K\in\mathcal T_h}K$. This shows how rich is the family of domains we can deal with.

The (isoparametric) $P_1$ finite element space is defined by pullback: for each $K$, functions in $V_h$
restrict to polynomials of degree $\le 1$ on $\widehat K$ after composition with $F_K^{-1}$.
The operator $I_h:H^2(\Omega)\to V_h$ denotes the nodal Lagrange interpolant (defined using the mesh vertices).

\begin{lemma}[Nodal interpolation estimate for more general domains]
\label{lem:app-interp}
There exists a constant $C_I>0$, independent of $h$, such that for all $w\in H^2(\Omega)$,
\begin{equation}
\label{eq:app-interp}
\|w-I_h w\|_{H^1(\Omega)} \le C_I\, h\,\|w\|_{H^2(\Omega)}.
\end{equation}
\end{lemma}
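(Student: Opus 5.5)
The plan is the standard Bramble--Hilbert scaling argument, carried out element by element and transported through the isoparametric maps $F_K$. Because the elementwise estimates are additive and the mesh covers $\Omega$ exactly ($\Omega=\bigcup_{K}K$), it suffices to prove, for each $K\in\mathcal T_h$ and every $w\in H^2(K)$,
\[
|w-I_hw|_{H^m(K)}\le C\,h_K^{2-m}\,\|w\|_{H^2(K)},\qquad m=0,1 .
\]
Summing the squares over $K$ and using $h_K\simeq h$ then gives the claim. Note that $I_hw$ is well defined because $H^2(K)\hookrightarrow C(\overline K)$ for $d\le 3$. It is also globally continuous, hence in $H^1(\Omega)$, since neighbouring elements share faces through compatible parametrizations and $P_1$ nodal data on a face determine the trace.

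\emph{Step 1 (reference element).} Set $\hat w:=w\circ F_K$. Since $(I_hw)\circ F_K=\hat I\hat w$, the $P_1$ nodal interpolant on $\widehat K$, the interpolation error pulls back exactly. On $\widehat K$, $\hat I$ is bounded from $H^2(\widehat K)$ to $H^1(\widehat K)$ and reproduces $P_1$. The Bramble--Hilbert lemma therefore gives
\[
\|\hat w-\hat I\hat w\|_{H^1(\widehat K)}\le \widehat C\,|\hat w|_{H^2(\widehat K)} .
\]

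\emph{Step 2 (transformation of seminorms).} This is where the curved geometry enters, and it is the main obstacle. By the chain rule,
\[
D\hat w=(Dw\circ F_K)\,DF_K,\qquad D^2\hat w=DF_K^{\top}(D^2w\circ F_K)DF_K+(Dw\circ F_K)\cdot D^2F_K .
\]
Using \eqref{eq:app-shapereg}, i.e. $\|DF_K\|\le\gamma h_K$, $\|D^2F_K\|\le\gamma h_K$ and $\det DF_K\simeq h_K^3$ (the upper bound follows from the bound on $DF_K$), we obtain
\[
|\hat w|_{H^2(\widehat K)}\lesssim h_K^{-3/2}\bigl(h_K^2|w|_{H^2(K)}+h_K|w|_{H^1(K)}\bigr).
\]
The lower-order term $h_K|w|_{H^1(K)}$ is exactly what the affine case lacks. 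Here I rely on the assumption that $\|D^2F_K\|\lesssim h_K$ rather than the weaker $\|D^2F_K\|\lesssim h_K^{2}\cdot h_K^{-1}$: this keeps the extra term of order $h_K$, i.e. one power of $h_K$ better than the $h_K^{0}$ a naive bound would give, and it is harmless because $|w|_{H^1}\le\|w\|_{H^2}$. In the other direction, $\|(DF_K)^{-1}\|\le\gamma h_K^{-1}$ yields
\[
|v|_{H^1(K)}\lesssim h_K^{3/2}h_K^{-1}|\hat v|_{H^1(\widehat K)},\qquad \|v\|_{L^2(K)}\lesssim h_K^{3/2}\|\hat v\|_{L^2(\widehat K)} .
\]

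\emph{Step 3 (combine).} Apply Step 2 to $v=w-I_hw$ together with Step 1. This gives
\[
|w-I_hw|_{H^1(K)}\lesssim h_K^{1/2}\,h_K^{-3/2}\bigl(h_K^2|w|_{H^2(K)}+h_K|w|_{H^1(K)}\bigr)\lesssim h_K\|w\|_{H^2(K)},
\]
and similarly $\|w-I_hw\|_{L^2(K)}\lesssim h_K^2\|w\|_{H^2(K)}$. Squaring, summing over $K$, and using quasi-uniformity ($h_K\simeq h$) yields \eqref{eq:app-interp}, with $C_I$ depending only on $\gamma$, $c_0$ and $\widehat K$.
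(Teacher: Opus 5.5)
Your route is the paper's (Bramble--Hilbert on the reference simplex, pullback through $F_K$, summation over elements), but the decisive step fails as written. Since $h_K^{1/2}\cdot h_K^{-3/2}=h_K^{-1}$, the chain in Step~3 actually yields
\[
|w-I_hw|_{H^1(K)}\lesssim h_K\,|w|_{H^2(K)}+|w|_{H^1(K)} .
\]
The second term comes from $(Dw\circ F_K)\cdot D^2F_K$ and carries no power of $h_K$, so it cannot be bounded by $h_K\|w\|_{H^2(K)}$. Your justification in Step~2 is confused on exactly this point, since $h_K^2\cdot h_K^{-1}$ is just $h_K$. Compared with the principal term $h_K^2|D^2w\circ F_K|$, the term $h_K|Dw\circ F_K|$ is one power of $h_K$ short, so it is not harmless.

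This is not mere bookkeeping. Write $F_K=x_K+h_KG_K$. The bound $\|D^2F_K\|\le\gamma h_K$ permits $D^2G_K$ of order one, i.e.\ elements that remain curved at unit scale. For affine $w=a\cdot x$ one has $\widehat w-\widehat I\widehat w=h_K\,a\cdot(G_K-\widehat I G_K)$, which is generically of size $h_K|a|$ in $H^1(\widehat K)$. Hence $|w-I_hw|_{H^1(K)}\sim h_K^{3/2}|a|\sim|w|_{H^1(K)}$. If a fixed fraction of the elements is curved in this way, $\|w-I_hw\|_{H^1(\Omega)}$ does not tend to zero.

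The missing ingredient is the standard regularity condition for isoparametric families, $\|D^2F_K\|_{L^\infty(\widehat K)}\le\gamma h_K^2$. With it, the lower-order term in $D^2\widehat w$ is $O(h_K^2)\,|Dw\circ F_K|$ and contributes only $h_K|w|_{H^1(K)}$ after scaling. Your Steps~1--3 then go through verbatim, including your $L^2$ bound of order $h_K^2$.

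This stronger condition is also what the paper's pushforward construction actually provides. If $F_K=\Phi\circ A_K$ with $A_K$ affine and $\|DA_K\|\lesssim h$, then $D^2F_K=D^2\Phi(A_K\cdot)[DA_K\cdot,DA_K\cdot]=O(h^2)$.

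Be aware that the paper's own proof has the same defect. It is hidden in the claim that the $D^2F_K$ term is ``absorbed into $\|w\|_{H^2(K)}$'' in \eqref{eq:app-H2-map}, and that absorption is legitimate only under the $h_K^2$ bound.
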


\begin{proof}
We give a self-contained proof that remains valid for $C^{1,1}$ domains under the mesh assumption above.
Fix $w\in H^2(\Omega)$ and $K\in\mathcal T_h$.
Set $\widehat w:=w\circ F_K\in H^2(\widehat K)$ and denote by $\widehat I$ the nodal $P_1$ interpolant on the reference simplex
$\widehat K$ (with the reference vertices as nodes).
By construction of the isoparametric interpolant, the local interpolant satisfies
\begin{equation}
\label{eq:app-local-interp}
(I_hw)|_{K} = (\widehat I\,\widehat w)\circ F_K^{-1}.
\end{equation}

\smallskip
\noindent\emph{Step 1: reference-element estimate.}
On the fixed simplex $\widehat K$ the Bramble--Hilbert lemma (or an explicit finite-dimensional argument) yields the standard bound
\begin{equation}
\label{eq:app-ref-est}
\|\widehat w-\widehat I\widehat w\|_{H^1(\widehat K)}\le C_{\widehat I}\,|\widehat w|_{H^2(\widehat K)},
\end{equation}
where $C_{\widehat I}$ depends only on the reference simplex.

\smallskip
\noindent\emph{Step 2: mapping of norms.}
Using the change of variables $x=F_K(\widehat x)$, $J_K:=DF_K$, and the chain rule,
\[\nabla_x \big(\phi\circ F_K^{-1}\big)(x)=J_K(\widehat x)^{-\top}\,\nabla_{\widehat x}\phi(\widehat x),\qquad \widehat x=F_K^{-1}(x),\]
we obtain the local $H^1$-estimate
\begin{equation}
\label{eq:app-H1-map}
\|w-I_hw\|_{H^1(K)}\le C\,h_K^{\frac d2-1}\,\|\widehat w-\widehat I\widehat w\|_{H^1(\widehat K)},
\end{equation}
with a constant $C$ depending only on the uniform bounds in \eqref{eq:app-shapereg}.
Similarly, by differentiating twice and using \eqref{eq:app-shapereg}, one has the standard pullback bound
\begin{equation}
\label{eq:app-H2-map}
|\widehat w|_{H^2(\widehat K)}\le C\,h_K^{2-\frac d2}\,\|w\|_{H^2(K)},
\end{equation}
where again $C$ depends only on \eqref{eq:app-shapereg}.
(A direct derivation expands $D^2_{\widehat x}(w\circ F_K)$ into a term involving $D^2_x w$ and a lower-order term involving
$D_x w$ multiplied by $D^2F_K$; the latter is controlled by \eqref{eq:app-shapereg} and absorbed into $\|w\|_{H^2(K)}$.)

\smallskip
\noindent\emph{Step 3: local-to-global.}
Combining \eqref{eq:app-ref-est}--\eqref{eq:app-H2-map} gives
\[
\|w-I_hw\|_{H^1(K)}
\le C\,h_K^{\frac d2-1}\,C_{\widehat I}\,|\widehat w|_{H^2(\widehat K)}
\le C\,C_{\widehat I}\,h_K\,\|w\|_{H^2(K)}.
\]
Summing over $K\in\mathcal T_h$ and using quasi-uniformity ($h_K\simeq h$) yields
\[
\|w-I_hw\|_{H^1(\Omega)}^2
=\sum_{K\in\mathcal T_h}\|w-I_hw\|_{H^1(K)}^2
\le C\,h^2\sum_{K\in\mathcal T_h}\|w\|_{H^2(K)}^2
=C\,h^2\,\|w\|_{H^2(\Omega)}^2,
\]
which implies \eqref{eq:app-interp} with $C_I:=\sqrt{C}$.
The constant depends only on the reference simplex and the uniform $C^1$ shape-regularity bounds \eqref{eq:app-shapereg},
and is therefore independent of $h$.
\end{proof}

\begin{proposition}[Nullspace coercivity on $C^{1,1}$ domains with exact mesh assumption (\ref{eq:app-shapereg})]
\label{prop:app-xi-diverges}
Let $D_M\subset D(\mathcal A)$ be the subspace
\[
D_M:=\{w\in D(\mathcal A):\ \langle d_{M,j},w\rangle = w(x_{h,j})=0\ \forall j=1,\dots,M\}.
\]
Then there exists $c>0$, independent of $h$, such that the constant
\[
\xi_M^+ := \inf\Big\{\frac{\|w\|_{D(\mathcal A)}^2}{\|w\|_V^2}:\ w\in D_M\setminus\{0\}\Big\}
\]
satisfies
\begin{equation}
\label{eq:app-xi-rate}
\xi_M^+ \ge c\,h^{-2}\longrightarrow +\infty\qquad (h\downarrow 0).
\end{equation}
\end{proposition}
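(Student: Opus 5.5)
The plan is to combine the interpolation estimate of Lemma~\ref{lem:app-interp} with the graph-norm equivalence of Lemma~\ref{lem:app-graphnorm-H2}. The key observation is that for $w\in D_M$ all nodal values vanish, $w(x_{h,j})=0$ for every mesh vertex. The nodal Lagrange interpolant is determined only by these values, so $I_h w=0$. Nodal values are well defined because $D(\mathcal A)\hookrightarrow H^2(\Omega)\hookrightarrow C(\overline\Omega)$ for $d\le 3$, and Remark~\ref{rem:app-KRW-closure} allows boundary vertices.

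With $I_hw=0$, estimate \eqref{eq:app-interp} gives directly
\[
\|w\|_{V}=\|w\|_{H^1(\Omega)}=\|w-I_hw\|_{H^1(\Omega)}\le C_I\,h\,\|w\|_{H^2(\Omega)}.
\]
By the lower bound in \eqref{eq:app-graphnorm-H2}, $\|w\|_{H^2(\Omega)}\le c_{\mathcal A}^{-1}\|w\|_{D(\mathcal A)}$. Hence
\[
\|w\|_V^2\le C_I^2c_{\mathcal A}^{-2}h^2\|w\|_{D(\mathcal A)}^2\qquad\text{for all } w\in D_M.
\]
Dividing by $\|w\|_V^2$ and taking the infimum over $D_M\setminus\{0\}$ yields \eqref{eq:app-xi-rate} with $c:=c_{\mathcal A}^2C_I^{-2}$. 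This $c$ is independent of $h$ because $C_I$ depends only on the reference simplex and the bounds \eqref{eq:app-shapereg}, and $c_{\mathcal A}$ depends only on $\Omega$ and $\kappa_m$. The same argument shows that $\|w\|_V=0$ forces $w=0$, so the ratio is well defined on $D_M\setminus\{0\}$. The alternative form $\xi_M^+\gtrsim M^{2/d}$ then follows from quasi-uniformity, which gives $M(h)\simeq h^{-d}$.

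The main obstacle is justifying $I_hw=0$ in the isoparametric setting. Each local interpolant $\widehat I\widehat w$ is the $P_1$ polynomial on $\widehat K$ fixed by the values of $\widehat w=w\circ F_K$ at the reference vertices. Since $F_K$ maps reference vertices to mesh vertices, these values are the $w(x_{h,j})$, all zero. So $\widehat I\widehat w=0$ on every element, and by \eqref{eq:app-local-interp} $I_hw=0$ globally. A second subtlety is the implicit requirement that the norm on $V$ used in $\xi_M^+$ is (equivalent to) the $H^1(\Omega)$ norm; any fixed equivalence constant only changes $c$.
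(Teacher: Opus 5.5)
Your proposal is correct and follows the paper's own proof. Vanishing vertex values give $I_hw=0$; the $P_1$ interpolation estimate and the lower bound of the graph-norm equivalence then give $\|w\|_V\le C\,h\,\|w\|_{D(\mathcal A)}$ on $D_M$, hence $\xi_M^+\ge c_{\mathcal A}^2C_I^{-2}h^{-2}$ (up to the $V$--$H^1(\Omega)$ norm equivalence, which the paper also uses). Your reference-element argument for $I_hw=0$ simply spells out a step the paper states without comment.
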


\begin{proof}
Let $w\in D_M$. Since $w\in D(\mathcal A)\subset H^2(\Omega)$ and $w(x_{h,j})=0$ for every vertex, the nodal interpolant
satisfies $I_h w\equiv 0$.
Therefore, using Lemma~\ref{lem:app-interp} and Lemma~\ref{lem:app-graphnorm-H2},
\[
\|w\|_{H^1(\Omega)} = \|w-I_h w\|_{H^1(\Omega)} \le C_I h\,\|w\|_{H^2(\Omega)}
\le \frac{C_I}{c_\mathcal A}\, h\,\|w\|_{D(\mathcal A)}.
\]
Since $V=H^1(\Omega)$ with an equivalent norm, we obtain $\|w\|_V\le C h\|w\|_{D(\mathcal A)}$ for a constant $C$ independent
of $h$. Squaring and taking reciprocals yields
\[
\frac{\|w\|_{D(\mathcal A)}^2}{\|w\|_V^2} \ge C^{-2} h^{-2}.
\]
Taking the infimum over $w\in D_M\setminus\{0\}$ gives \eqref{eq:app-xi-rate}.
\end{proof}

\begin{corollary}[Assumption~\ref{assump:KRW-act} on domains satisfying (\ref{eq:app-shapereg})]
\label{cor:app-KRW-act}
With the choices \eqref{eq:app-actuators-dirac} and the biorthogonal family from Lemma~\ref{lem:app-biorthogonal},
Assumption~\ref{assump:KRW-act} holds on bounded $C^{1,1}$ domains, and in particular
$\xi_M^+\to+\infty$.
\end{corollary}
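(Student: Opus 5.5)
The plan is to check the three items of Assumption~\ref{assump:KRW-act} one by one for the Dirac family \eqref{eq:app-actuators-dirac}, indexed by $h$. We set $M_\sigma:=M(h)$, the number of mesh vertices of $\mathcal T_h$, and take $\Psi_{M,j}$ to be the biorthogonal family of Lemma~\ref{lem:app-biorthogonal}. Taking $h$ along a decreasing sequence $h_n\downarrow0$ (for instance with nested refinements, or simply passing to a subsequence), the vertex count $M(h_n)$ can be arranged to be strictly increasing. This is possible because quasi-uniformity gives $M(h)\simeq h^{-d}|\Omega|\to\infty$, so we obtain the required increasing sequence $M\mapsto M_\sigma$.

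\textbf{Items 1 and 2.} Lemma~\ref{lem:app-biorthogonal} gives directly $\langle d_{M,i},\Psi_{M,j}\rangle=\delta_{ij}$. Linear independence of both families follows from biorthogonality. If $\sum_j c_j d_{M,j}=0$, we pair with $\Psi_{M,i}$ and get $c_i=0$. Symmetrically, if $\sum_j c_j\Psi_{M,j}=0$, we pair with $d_{M,i}$. Hence both spans have dimension $M_\sigma$. Membership $d_{M,j}\in D(\mathcal A)'$ holds for vertices on $\overline\Omega$ by Lemma~\ref{lem:app-graphnorm-H2} and Remark~\ref{rem:app-KRW-closure}.

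\textbf{Item 3.} The set over which $\xi_M^+$ is an infimum is exactly $D_M\setminus\{0\}$. Proposition~\ref{prop:app-xi-diverges} therefore yields $\xi_M^+\ge c\,h^{-2}\to+\infty$ along the chosen sequence. The hypotheses of that proposition are the interpolation estimate of Lemma~\ref{lem:app-interp} and the $H^2$ regularity of Lemma~\ref{lem:app-graphnorm-H2}. The first holds under the mesh assumption \eqref{eq:app-shapereg} and the second on $C^{1,1}$ domains, so both are available. Since $M(h)\simeq h^{-d}$, we also get $\xi_M^+\gtrsim M_\sigma^{2/d}$.

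\textbf{Main obstacle.} The only delicate point is bookkeeping rather than analysis. The family must be indexed so that $M_\sigma$ is increasing and the nullspace condition matches the definition in Assumption~\ref{assump:KRW-act}. In particular, $V=H^1(\Omega)$ carries a norm equivalent to the one used there, which only changes the constant $c$. I would state the choice of the sequence $h_n$ explicitly to handle the indexing.
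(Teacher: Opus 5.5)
Your proposal is correct and matches the paper's argument. The corollary carries no separate proof there and is simply the combination of Lemma~\ref{lem:app-biorthogonal} (items 1--2) with Proposition~\ref{prop:app-xi-diverges} (item 3), and your explicit choice of $h_n\downarrow 0$ with $M(h_n)$ strictly increasing is harmless bookkeeping that the paper leaves tacit.

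One caveat is inherited from the paper, not introduced by you: the pullback bound behind Lemma~\ref{lem:app-interp} needs $\|D^2F_K\|_{L^\infty(\widehat K)}\lesssim h_K^2$, which the pushforward construction via $\Phi$ does provide, rather than $\lesssim h_K$ as written in \eqref{eq:app-shapereg}. With only $\lesssim h_K$, the term $D_xw\cdot D^2F_K$ contributes $O(h_K^{1-d/2})\|w\|_{H^1(K)}$, and the local estimate degrades to $\|w-I_hw\|_{H^1(K)}\lesssim h_K\|w\|_{H^2(K)}+\|w\|_{H^1(K)}$.
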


\begin{remark}[Rate in terms of the number of actuators]
If $\{\mathcal T_h\}$ is quasi-uniform, then $M=M(h)\asymp h^{-d}$, hence \eqref{eq:app-xi-rate} implies
$\xi_M^+\gtrsim M^{2/d}$.
\end{remark}

\subsection{Three sufficient mechanisms ensuring solvability of the fixed--point correction}
\label{subsec:fp-solvability-mechanisms}
\noindent
This subsection collects three \emph{complementary} ways to guarantee solvability of the finite-dimensional
fixed--point correction used to compensate the steady bias. The three mechanisms correspond to:
(i) increasing the medium diffusivity $\kappa_m$; (ii) strengthening the stabilizing feedback (gain/scaling $\lambda$);
and (iii) designing the actuator placement to improve conditioning.

\paragraph{Fixed--point equation and a universal contraction criterion.}
Let $\{\varphi_k\}_{k\ge 1}$ be the Neumann eigenfunctions of $-\Delta$ in $\Omega$ with eigenvalues
$0=\lambda_1\le\lambda_2\le\cdots$ and set
\[
X_N:=\mathrm{span}\{\varphi_1,\dots,\varphi_N\},\qquad P_N:H\to X_N \text{ the }H\text{--orthogonal projector},\qquad Q_N:=I-P_N .
\]
In the fixed--point construction of Subsection~\ref{subsec:fixed-point-bias} (notation as there),
the bias operator $T_N:X_N\to X_N$ can be written as
\begin{equation}
\label{eq:TN-abstract}
T_N \;=\; P_N R_{\mathrm{cl}}\, Q_N\, B\,U_N ,
\end{equation}
where:
\begin{itemize}
\item $A y := \kappa_m \Delta y$ (Neumann boundary condition) and $A_{\mathrm{cl}}$ is the closed--loop generator;
\item $S_{\mathrm{cl}}(t)=e^{tA_{\mathrm{cl}}}$ is exponentially stable in $V'$ (see Theorem~\ref{thm:KRW-Vprime});
\item $R_{\mathrm{cl}}:=\displaystyle\int_0^\infty S_{\mathrm{cl}}(t)\,dt$ (so $R_{\mathrm{cl}}=-A_{\mathrm{cl}}^{-1}$ whenever $A_{\mathrm{cl}}$ is invertible);
\item $B:\mathbb{R}^M\to\mathcal{X}$ is the actuator operator (Dirac point actuators, interpreted in the same dual/extrapolation space $\mathcal{X}$ as in Subsection~\ref{subsec:fixed-point-bias});
\item $U_N:X_N\to\mathbb{R}^M$ is the (linear) ``feedforward selector'' used to enforce the cancellation property
\begin{equation}
\label{eq:cancellation-property}
P_N \bigl(A_0 y + B U_N y\bigr)=0,\qquad \forall\,y\in X_N,
\end{equation}
so that the residual forcing generated by the feedforward acts only on the tail.
\end{itemize}
The fixed--point corrected reference $y^\star\in X_N$ is defined by
\begin{equation}
\label{eq:fp-linear-again}
(I+T_N)\,y^\star = y_r \qquad \text{in }X_N.
\end{equation}

\medskip

\noindent
A sufficient condition for unique solvability of \eqref{eq:fp-linear-again} by contraction is $\|T_N\|<1$.
To expose the three mechanisms, we separate the estimate into (i) \emph{low$\leftarrow$high leakage} and
(ii) \emph{conditioning of the feedforward}:
\begin{equation}
\label{eq:TN-factorization}
\|T_N\|_{\mathcal{L}(X_N)}
\;\le\;
\underbrace{\|P_N R_{\mathrm{cl}}Q_N\|_{\mathcal{L}(\mathcal{X})}}_{=: \,L_N}
\;\cdot\;
\|B\|_{\mathcal{L}(\mathbb{R}^M,\mathcal{X})}
\;\cdot\;
\|U_N\|_{\mathcal{L}(X_N,\mathbb{R}^M)}.
\end{equation}

\begin{proposition}[Universal contraction criterion]
\label{prop:universal-contraction}
If
\begin{equation}
\label{eq:universal-contraction}
L_N\,\|B\|\,\|U_N\|<1,
\end{equation}
then $I+T_N$ is invertible on $X_N$, \eqref{eq:fp-linear-again} admits a unique solution $y^\star\in X_N$, and the Picard
iteration $y^{(\ell+1)} := y_r - T_N y^{(\ell)}$ converges geometrically to $y^\star$.
Moreover,
\[
\|y^\star\|_{X_N}\le \frac{1}{1-\|T_N\|}\,\|y_r\|_{X_N}.
\]
\end{proposition}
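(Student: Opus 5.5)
The plan is to reduce everything to the finite-dimensional Banach fixed-point argument already used in Proposition~\ref{prop:fixed-point}. The first step is to bound $\|T_N\|_{\mathcal{L}(X_N)}$ by the product in \eqref{eq:universal-contraction}. For this I would use the factorization \eqref{eq:TN-abstract}, $T_N=P_NR_{\mathrm{cl}}Q_NBU_N$, and then submultiplicativity of operator norms. Writing $T_N=(P_NR_{\mathrm{cl}}Q_N)\circ B\circ U_N$ and using $Q_N^2=Q_N$ gives
\[
\|T_N\|\le \|P_NR_{\mathrm{cl}}Q_N\|_{\mathcal{L}(\mathcal{X})}\,\|B\|\,\|U_N\| = L_N\|B\|\|U_N\|,
\]
which is \eqref{eq:TN-factorization}.

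Some bookkeeping is needed here. The norm on $X_N$ in which $\|T_N\|$ is measured must be compatible with the $\mathcal{X}$-norm in which $L_N$ is defined. On the finite-dimensional space $X_N\subset V$ all norms are equivalent, so I would fix the norm on $X_N$ to be the restriction of the norm of $\mathcal{X}$ (or of $V'$) and regard $P_NR_{\mathrm{cl}}Q_N$ as a map $\mathcal{X}\to X_N$. With this choice the displayed inequality holds with no hidden constants. Under \eqref{eq:universal-contraction} we then get $q:=\|T_N\|<1$.

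Next, invertibility of $I+T_N$ follows from the Neumann series: $(I+T_N)^{-1}=\sum_{\ell\ge0}(-T_N)^\ell$ converges in $\mathcal{L}(X_N)$, and
\[
\|(I+T_N)^{-1}\|\le (1-q)^{-1}.
\]
This gives existence and uniqueness of the solution $y^\star=(I+T_N)^{-1}y_r$ of \eqref{eq:fp-linear-again}. It also yields the stated a priori bound $\|y^\star\|\le(1-\|T_N\|)^{-1}\|y_r\|$, or equivalently, via the triangle inequality,
\[
\|y^\star\|\le\|y_r\|+q\|y^\star\|.
\]

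For the Picard iteration, $F(y)=y_r-T_Ny$ is a contraction with constant $q$ on the complete space $X_N$. So part (2) of Proposition~\ref{prop:fixed-point}, or directly the estimate $\|y^{(\ell)}-y^\star\|\le q^\ell\|y_r-y^\star\|$, gives geometric convergence.

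The only real obstacle is the norm compatibility just described: ensuring that $L_N$, $\|B\|$, and $\|U_N\|$ are all measured in consistent spaces, so that the product genuinely bounds $\|T_N\|$ in a norm on $X_N$. Everything else is standard.
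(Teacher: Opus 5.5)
Your proposal is correct and follows the paper's argument. The factorization \eqref{eq:TN-factorization} gives $\|T_N\|\le L_N\|B\|\|U_N\|<1$, and the Neumann series for $(I+T_N)^{-1}$ together with the Banach fixed-point theorem then yield invertibility, the a priori bound and geometric convergence of the Picard iteration; your extra care in fixing one norm on $X_N$, used both for the target of $P_NR_{\mathrm{cl}}Q_N$ and for the domain of $U_N$, is exactly what the paper's one-line proof leaves implicit.
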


\begin{proof}
This is immediate from $\|T_N\|<1$ and the Neumann series for $(I+T_N)^{-1}$.
\end{proof}
\noindent
In the remainder of this subsection, we present three independent ways to enforce \eqref{eq:universal-contraction}.
Mechanisms (A) and (B) reduce the leakage factor $L_N$, while mechanism (C) reduces the conditioning factor $\|U_N\|$.

\paragraph{(A) Large diffusivity $\kappa_m$ reduces low$\leftarrow$high leakage.}
We decompose the closed--loop generator with respect to $H=X_N\oplus X_N^\perp$:
\[
A_{\mathrm{cl}}
=
\begin{pmatrix}
A_{11} & A_{12}\\
A_{21} & A_{22}
\end{pmatrix},
\qquad
A_{ij}:=P_i A_{\mathrm{cl}} P_j,\;\; P_1:=P_N,\;\; P_2:=Q_N,
\]
and write
\[
A_{22} = \kappa_m\,Q_N\Delta Q_N + E_{22},\qquad E_{22}:=Q_N\bigl(A_{\mathrm{cl}}-\kappa_m\Delta\bigr)Q_N .
\]
Set $\beta:=\|E_{22}\|_{\mathcal{L}(X_N^\perp)}$ (finite because the feedback/observation terms in $A_{\mathrm{cl}}$ are bounded on the chosen state space).

\begin{lemma}[Tail resolvent estimate]
\label{lem:tail-resolvent-kappa}
If $\kappa_m\lambda_{N+1}>\beta$, then $A_{22}$ is invertible on $X_N^\perp$ and
\begin{equation}
\label{eq:tail-inverse}
\|A_{22}^{-1}\|_{\mathcal{L}(X_N^\perp)}
\;\le\;
\frac{1}{\kappa_m\lambda_{N+1}-\beta}.
\end{equation}
\end{lemma}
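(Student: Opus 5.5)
The plan is a Neumann-series perturbation argument: treat $A_{22}$ as the negative-definite operator $\kappa_m Q_N\Delta Q_N$ plus the bounded perturbation $E_{22}$, whose norm is $\beta$.

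\emph{Step 1: the unperturbed tail operator.} On $X_N^\perp=\overline{\mathrm{span}}\{\varphi_k\}_{k>N}$ I would write
\[
-\kappa_m Q_N\Delta Q_N=\sum_{k>N}\kappa_m\lambda_k\,\langle\cdot,\varphi_k\rangle\varphi_k .
\]
Since $\lambda_k\ge\lambda_{N+1}$ for $k>N$ and $\kappa_m\lambda_{N+1}>\beta\ge0$, this operator is self-adjoint and positive on its natural domain in $X_N^\perp$. Its inverse is therefore bounded, with
\[
\|(\kappa_m Q_N\Delta Q_N)^{-1}\|_{\mathcal L(X_N^\perp)}=\frac{1}{\kappa_m\lambda_{N+1}}.
\]
Moreover, for every $v$ in the domain,
\[
\|\kappa_m Q_N\Delta Q_N v\|\ge \kappa_m\lambda_{N+1}\|v\|.
\]

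\emph{Step 2: factorization.} Write
\[
A_{22}=\kappa_m Q_N\Delta Q_N\bigl(I+(\kappa_m Q_N\Delta Q_N)^{-1}E_{22}\bigr).
\]
The bracketed perturbation satisfies
\[
\|(\kappa_m Q_N\Delta Q_N)^{-1}E_{22}\|\le \frac{\beta}{\kappa_m\lambda_{N+1}}<1 .
\]
By the Neumann series, the bracket is therefore an isomorphism of $X_N^\perp$, and its inverse has norm at most $\bigl(1-\beta/(\kappa_m\lambda_{N+1})\bigr)^{-1}$.

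\emph{Step 3: conclusion.} It follows that $A_{22}$ is invertible, with
\[
A_{22}^{-1}=\bigl(I+(\kappa_m Q_N\Delta Q_N)^{-1}E_{22}\bigr)^{-1}(\kappa_m Q_N\Delta Q_N)^{-1}.
\]
Multiplying the two norm bounds gives
\[
\|A_{22}^{-1}\|_{\mathcal L(X_N^\perp)}\le\frac{1}{\kappa_m\lambda_{N+1}}\cdot\frac{1}{1-\beta/(\kappa_m\lambda_{N+1})}=\frac{1}{\kappa_m\lambda_{N+1}-\beta},
\]
which is exactly \eqref{eq:tail-inverse}.

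An equivalent route avoids the factorization. From the reverse triangle inequality,
\[
\|A_{22}v\|\ge(\kappa_m\lambda_{N+1}-\beta)\|v\|,
\]
which gives injectivity, closed range, and the norm bound at once. Surjectivity then follows from the factorization above or from a continuity argument.

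\emph{Main obstacle.} The delicate point is functional-analytic: in which space the tail block and $\beta$ live. The statement asserts that $\beta$ is finite on $X_N^\perp\subset H$. However, the closed-loop feedback term $\lambda B C$ maps $H$ into $D(\mathcal A)'$, not into $H$, so $E_{22}$ is genuinely bounded only if the framework is read accordingly. I would handle this by taking $\beta<\infty$ as the hypothesis, as the statement does, and interpreting $E_{22}$ as a bounded operator on $X_N^\perp$ (in whichever realization makes that true). Under that reading the argument above is purely an operator-norm perturbation, and the domain of $A_{22}$ equals that of $Q_N\Delta Q_N$ because $E_{22}$ is bounded.
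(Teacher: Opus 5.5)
Your proposal is correct and follows the paper's own argument: the paper also bounds $\|(\kappa_m Q_N\Delta Q_N)^{-1}\|\le(\kappa_m\lambda_{N+1})^{-1}$ spectrally on $X_N^\perp$ and then inverts by a Neumann series. The only difference is that the paper factors on the other side, requiring $\|E_{22}(\kappa_m Q_N\Delta Q_N)^{-1}\|<1$, which is immaterial; your caveat about the finiteness of $\beta$ matches the paper, which simply asserts it as part of the setup.
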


\begin{proof}
On $X_N^\perp$, the spectrum of $Q_N\Delta Q_N$ is contained in $\{-\lambda_k\}_{k\ge N+1}$, hence
$\|(\kappa_m Q_N\Delta Q_N)^{-1}\|\le (\kappa_m\lambda_{N+1})^{-1}$.
Write $A_{22}=\kappa_m Q_N\Delta Q_N + E_{22}$ and invert by a Neumann series when we have 
$\|E_{22}(\kappa_m Q_N\Delta Q_N)^{-1}\|<1$, i.e. $\kappa_m\lambda_{N+1}>\beta$.
\end{proof}
\noindent
Assume in addition that the Schur complement on $X_N$,
\[
S:=A_{11}-A_{12}A_{22}^{-1}A_{21},
\]
is invertible and satisfies $\|S^{-1}\|\le C_S$ (typically guaranteed by exponential stability on $X_N$ of the chosen feedback design).
Then the $(1,2)$--block of $A_{\mathrm{cl}}^{-1}$ satisfies the standard identity
\[
P_N A_{\mathrm{cl}}^{-1} Q_N \;=\; -S^{-1}A_{12}A_{22}^{-1}.
\]
Since $R_{\mathrm{cl}}=-A_{\mathrm{cl}}^{-1}$, we obtain:

\begin{proposition}[Leakage bound improved by large $\kappa_m$]
\label{prop:leakage-kappa}
Assume $\kappa_m\lambda_{N+1}>\beta$ and $\|S^{-1}\|\le C_S$. Then
\begin{equation}
\label{eq:leakage-kappa}
L_N=\|P_N R_{\mathrm{cl}}Q_N\|
\;\le\;
\frac{C_S\,\|A_{12}\|}{\kappa_m\lambda_{N+1}-\beta}.
\end{equation}
Consequently, the contraction condition \eqref{eq:universal-contraction} holds whenever
\begin{equation}
\label{eq:kappa-threshold-final}
\frac{C_S\,\|A_{12}\|}{\kappa_m\lambda_{N+1}-\beta}\,\|B\|\,\|U_N\|<1.
\end{equation}
\end{proposition}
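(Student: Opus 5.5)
The plan is to combine the definition $L_N=\|P_N R_{\mathrm{cl}}Q_N\|$ with the block identity recorded just before the statement, and then bound each factor separately. Since $R_{\mathrm{cl}}=-A_{\mathrm{cl}}^{-1}$ whenever $A_{\mathrm{cl}}$ is invertible, we have $P_N R_{\mathrm{cl}} Q_N=-P_NA_{\mathrm{cl}}^{-1}Q_N=S^{-1}A_{12}A_{22}^{-1}$. Submultiplicativity then gives
\[
L_N\le \|S^{-1}\|\,\|A_{12}\|\,\|A_{22}^{-1}\|_{\mathcal L(X_N^\perp)}.
\]
Inserting $\|S^{-1}\|\le C_S$ and the tail resolvent bound of Lemma~\ref{lem:tail-resolvent-kappa}, which applies because $\kappa_m\lambda_{N+1}>\beta$, yields \eqref{eq:leakage-kappa}. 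The threshold \eqref{eq:kappa-threshold-final} is then exactly the universal contraction criterion \eqref{eq:universal-contraction} with $L_N$ replaced by this upper bound, so Proposition~\ref{prop:universal-contraction} gives the consequence.

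The step that needs care is the block identity $P_NA_{\mathrm{cl}}^{-1}Q_N=-S^{-1}A_{12}A_{22}^{-1}$, together with the invertibility of $A_{\mathrm{cl}}$ that justifies $R_{\mathrm{cl}}=-A_{\mathrm{cl}}^{-1}$. For the identity I would verify the standard $2\times2$ Schur factorisation directly. Given $h=Q_Nh$, I solve $A_{\mathrm{cl}}(x_1+x_2)=h$ blockwise:
\[
A_{11}x_1+A_{12}x_2=0,\qquad A_{21}x_1+A_{22}x_2=h.
\]
The second equation gives $x_2=A_{22}^{-1}(h-A_{21}x_1)$. Substituting into the first gives $Sx_1=-A_{12}A_{22}^{-1}h$, hence $x_1=-S^{-1}A_{12}A_{22}^{-1}h$. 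This computation also shows that $A_{\mathrm{cl}}$ is bijective, since both $A_{22}$ and $S$ are invertible. On the other side, exponential stability of $S_{\mathrm{cl}}$ (Theorem~\ref{thm:KRW-Vprime}) guarantees that $R_{\mathrm{cl}}$ is well defined, so the two descriptions of $R_{\mathrm{cl}}$ coincide.

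Finally, I would make sure the operator norms are all taken on consistent spaces: $\mathcal X$ for $L_N$, $X_N^\perp$ for $A_{22}^{-1}$, and $\mathcal L(X_N^\perp,X_N)$ for $A_{12}$. With those conventions the bound \eqref{eq:leakage-kappa} and its consequence follow directly.
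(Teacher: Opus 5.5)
Your proposal is correct and follows the paper's proof, which likewise combines the block identity $P_NA_{\mathrm{cl}}^{-1}Q_N=-S^{-1}A_{12}A_{22}^{-1}$ (with $R_{\mathrm{cl}}=-A_{\mathrm{cl}}^{-1}$) with the tail resolvent bound of Lemma~\ref{lem:tail-resolvent-kappa} and then inserts the result into \eqref{eq:TN-factorization}. The paper only calls that block identity standard, and your explicit Schur-complement derivation of it is a correct addition.
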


\begin{proof}
Combine the block identity with Lemma~\ref{lem:tail-resolvent-kappa} and \eqref{eq:TN-factorization}.
\end{proof}

\paragraph{(B) Stronger stabilizing feedback (gain $\lambda$) reduces $R_{\mathrm{cl}}$ and hence $L_N$.}
A complementary mechanism is to increase the closed--loop decay rate by strengthening the feedback.
Assume the closed--loop semigroup satisfies the exponential stability estimate
\begin{equation}
\label{eq:exp-stab}
\|S_{\mathrm{cl}}(t)\|_{\mathcal{L}(\mathcal{X})} \le M e^{-\alpha t},\qquad t\ge 0,
\end{equation}
for some $M\ge 1$ and $\alpha>0$. Then
\begin{equation}
\label{eq:Rcl-bound}
\|R_{\mathrm{cl}}\|
\;=\;
\Bigl\|\int_0^\infty S_{\mathrm{cl}}(t)\,dt\Bigr\|
\;\le\;
\int_0^\infty \|S_{\mathrm{cl}}(t)\|\,dt
\;\le\;
\frac{M}{\alpha}.
\end{equation}
Since $\|P_N\|,\|Q_N\|\le 1$ on $H$ (and remain bounded in equivalent norms on $X_N$), this implies the crude but robust bound
\begin{equation}
\label{eq:leakage-alpha}
L_N=\|P_NR_{\mathrm{cl}}Q_N\|
\;\le\;
\|R_{\mathrm{cl}}\|
\;\le\;
\frac{M}{\alpha}.
\end{equation}

\begin{proposition}[Contraction criterion in terms of the closed--loop decay rate]
\label{prop:alpha-contraction}
Assume \eqref{eq:exp-stab}. If
\begin{equation}
\label{eq:alpha-contraction}
\frac{M}{\alpha}\,\|B\|\,\|U_N\|<1,
\end{equation}
then the fixed--point problem \eqref{eq:fp-linear-again} has a unique solution and the Picard iteration converges.
\end{proposition}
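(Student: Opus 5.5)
The plan is to chain together the factorization \eqref{eq:TN-abstract}, the resolvent bound \eqref{eq:Rcl-bound}, and the universal contraction criterion in Proposition~\ref{prop:universal-contraction}.

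First I recall that $T_N=P_N R_{\mathrm{cl}}Q_N B U_N$. I read this on the common space $\mathcal X$, meaning $V'$ for the states and $D(\mathcal A)'$ for the forcing, exactly as in \eqref{eq:Rcl-def}. This identity is justified by \eqref{eq:g-is-tail}: the forcing $g(y)=Q_N B U_N y$ is a pure tail, by the cancellation property \eqref{eq:cancellation-property}.

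Second, under \eqref{eq:exp-stab}, the Bochner integral defining $R_{\mathrm{cl}}$ converges absolutely in operator norm. This gives $\|R_{\mathrm{cl}}\|\le M/\alpha$, which is \eqref{eq:Rcl-bound}. The spectral projectors $P_N$ and $Q_N$ are bounded, with norm $\le1$, when one uses the spectrally defined norms on $V'$ and $D(\mathcal A)'$. In those norms $\|\eta\|^2=\sum_k (1+\kappa_m\lambda_k)^{-s}|\langle\eta,\varphi_k\rangle|^2$, so the projectors are orthogonal there. Hence $L_N\le M/\alpha$, which is \eqref{eq:leakage-alpha}.

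Third, I insert this into \eqref{eq:TN-factorization}:
\[
\|T_N\|_{\mathcal L(X_N)}\le \frac{M}{\alpha}\,\|B\|\,\|U_N\|<1 .
\]
Hypothesis \eqref{eq:alpha-contraction} therefore is exactly the contraction condition \eqref{eq:universal-contraction}. Proposition~\ref{prop:universal-contraction} then applies. It gives invertibility of $I+T_N$ through the Neumann series and unique solvability of \eqref{eq:fp-linear-again}. Geometric convergence of the Picard iteration follows because $y\mapsto y_r-T_Ny$ is a contraction on the finite-dimensional, hence complete, space $X_N$.

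The main obstacle is consistency of norms. $T_N$ maps $X_N\to X_N$, whereas $R_{\mathrm{cl}}$ acts from $D(\mathcal A)'$ into $V'$. So the factorization must be read as
\[
\|P_N\|_{\mathcal L(V',X_N)}\,\|R_{\mathrm{cl}}\|_{\mathcal L(D(\mathcal A)',V')}\,\|Q_N\|\,\|B\|_{\mathcal L(\R^M,D(\mathcal A)')}\,\|U_N\| .
\]
The exponential bound \eqref{eq:exp-stab} must then be understood in a norm for which the integral lands in $V'$. I would handle this by equipping $X_N$ with the $V'$-norm, which is equivalent to any other norm since $\dim X_N<\infty$. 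I would also interpret \eqref{eq:exp-stab} as the bound for $S_{\mathrm{cl}}$ on the relevant space, or absorb the $D(\mathcal A)'\to V'$ smoothing constant into $M$. Once this bookkeeping is fixed, the remaining steps are immediate.
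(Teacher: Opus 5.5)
Your proposal is correct and follows the paper's own argument: \eqref{eq:exp-stab} gives $\|R_{\mathrm{cl}}\|\le M/\alpha$, and the bounds $\|P_N\|,\|Q_N\|\le 1$ give $L_N\le M/\alpha$ (your spectrally defined norms justify these bounds more cleanly than the paper's remark about ``equivalent norms''); inserting this into \eqref{eq:TN-factorization} and invoking Proposition~\ref{prop:universal-contraction} finishes the proof. One caution on your norm bookkeeping: the fallback of absorbing the $D(\mathcal A)'\to V'$ smoothing constant into $M$ does not work, because that smoothing bound blows up like $t^{-1/2}$ as $t\to0$ and therefore cannot reproduce $M/\alpha$, so you should instead read everything on the single space $\mathcal X$ on which \eqref{eq:exp-stab} is assumed and $B\in\mathcal L(\R^M,\mathcal X)$, which is exactly the paper's setting.
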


\begin{proof}
Combine \eqref{eq:leakage-alpha} with Proposition~\ref{prop:universal-contraction}.
\end{proof}

\begin{remark}[How $\lambda$ enters $\alpha$]
In many finite-dimensional designs (pole placement / Riccati design applied to the truncated model on $X_N$),
one can enforce a prescribed decay margin on $X_N$:
\[
\Re\,\sigma\!\bigl(A_{11}(\lambda)\bigr)\le -\alpha_{\mathrm{low}}(\lambda),
\]
where $\alpha_{\mathrm{low}}(\lambda)$ increases when the feedback scaling $\lambda$ increases, provided the pair
$(A_{11},B_{11})$ is stabilizable on $X_N$.
The global $\alpha$ can be taken as $\alpha=\min\{\alpha_{\mathrm{low}}(\lambda),\,\kappa_m\lambda_{N+1}-\beta\}$,
up to a multiplicative constant $M$ (depending on the chosen norm). This makes \eqref{eq:alpha-contraction}
a practical design criterion: increase $\lambda$ until $\alpha$ is large enough.
\end{remark}

\paragraph{(C) Placement design reduces $\|U_N\|$ via conditioning of the sampling matrix.}
The operator $U_N$ is determined by the cancellation requirement \eqref{eq:cancellation-property}.
For point actuators, the relevant finite-dimensional matrix is the sampling (or ``moment'') matrix
\begin{equation}
\label{eq:CN-def}
C_N \;:=\; \bigl(\varphi_k(x_j)\bigr)_{k=1,\dots,N}^{j=1,\dots,M}\in\mathbb{R}^{N\times M},
\end{equation}
where $\{x_j\}_{j=1}^M\subset\Omega$ are the actuator centers. In typical constructions, $U_N$ is chosen as a right-inverse
of $C_N$ (possibly composed with fixed diagonal scalings depending on the exact definition of $B$ and the chosen basis of $X_N$).
The key quantity is the smallest singular value $\sigma_{\min}(C_N)$.

\begin{lemma}[Right-inverse bound]
\label{lem:right-inverse-bound}
Assume $\mathrm{rank}(C_N)=N$ (in particular $M\ge N$). Let $C_N^\dagger$ denote the Moore--Penrose pseudoinverse.
Then $C_N^\dagger$ is a bounded right-inverse, $C_N C_N^\dagger=I_N$, and
\begin{equation}
\label{eq:pseudoinverse-norm}
\|C_N^\dagger\|_{\mathcal{L}(\mathbb{R}^N,\mathbb{R}^M)} = \frac{1}{\sigma_{\min}(C_N)}.
\end{equation}
Consequently, if $U_N$ is chosen proportional to $C_N^\dagger$ (as in the minimum-norm cancellation choice), then
\begin{equation}
\label{eq:UN-bound-smin}
\|U_N\|\;\lesssim\;\frac{1}{\sigma_{\min}(C_N)}.
\end{equation}
\end{lemma}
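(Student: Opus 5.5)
The plan is to work with the singular value decomposition of $C_N$. Since $\mathrm{rank}(C_N)=N$ and $C_N\in\mathbb{R}^{N\times M}$, I will write $C_N=W\Sigma Z^\top$. Here $W\in\mathbb{R}^{N\times N}$ and $Z\in\mathbb{R}^{M\times M}$ are orthogonal, and $\Sigma=(\mathrm{diag}(\sigma_1,\dots,\sigma_N)\mid 0)$ with $\sigma_1\ge\cdots\ge\sigma_N=\sigma_{\min}(C_N)>0$. The Moore--Penrose pseudoinverse is then $C_N^\dagger=Z\Sigma^\dagger W^\top$, where $\Sigma^\dagger\in\mathbb{R}^{M\times N}$ stacks $\mathrm{diag}(\sigma_1^{-1},\dots,\sigma_N^{-1})$ on top of a zero block.

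The right-inverse identity is a direct computation. We have $C_NC_N^\dagger=W\Sigma Z^\top Z\Sigma^\dagger W^\top=W(\Sigma\Sigma^\dagger)W^\top$. Because every $\sigma_k$ is nonzero, $\Sigma\Sigma^\dagger=I_N$, and hence $C_NC_N^\dagger=I_N$. As an alternative, full row rank makes $C_NC_N^\top$ invertible, so one can use the explicit formula $C_N^\dagger=C_N^\top(C_NC_N^\top)^{-1}$.

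For the norm identity \eqref{eq:pseudoinverse-norm}, orthogonal matrices preserve Euclidean norms. Therefore $\|C_N^\dagger\|=\|\Sigma^\dagger\|=\max_k\sigma_k^{-1}=1/\sigma_N$. The maximum is attained at $e_N$, mapped through $W$.

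For \eqref{eq:UN-bound-smin}, I will use the moment system \eqref{eq:feedforward-linear-system}. It reads $C_N u=-a$, where $a_k=\langle A_0y,\varphi_k\rangle_H=-\kappa_m\lambda_k\alpha_k$ and $y=\sum_{k\le N}\alpha_k\varphi_k$. The minimum-norm choice is then $U_Ny=-C_N^\dagger a$. This gives
\[
|U_Ny|\le \sigma_{\min}(C_N)^{-1}\,\kappa_m\lambda_N\,\|y\|_{X_N},
\]
where $\|y\|_{X_N}$ is the $H$-norm, equal to the $\ell^2$-norm of the coefficients. This is the claimed bound, with implicit constant $\kappa_m\lambda_N$ and any fixed diagonal scalings coming from the choice of basis or the definition of $B$.

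The only point needing care is making this constant explicit, so that the symbol $\lesssim$ is justified. Here the constant depends on $N$ through $\lambda_N$ but not on the actuator placement, and it is exactly this independence that is relevant for mechanism (C).
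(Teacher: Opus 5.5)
Your proposal is correct and is essentially the paper's argument: the paper just invokes the standard fact that a full-row-rank matrix satisfies $\|C_N^\dagger\|=1/\sigma_{\min}(C_N)$, and your SVD computation is precisely the verification of that fact. Your extra step through the moment system, giving $|U_N y|\le \kappa_m\lambda_N\,\sigma_{\min}(C_N)^{-1}\|y\|_{X_N}$, is also correct and makes explicit the placement-independent constant hidden in the paper's $\lesssim$.
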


\begin{proof}
Standard linear algebra: $\|C_N^\dagger\|=1/\sigma_{\min}(C_N)$ for full row rank matrices.
\end{proof}

\begin{proposition}[Contraction via placement conditioning]
\label{prop:placement-contraction}
Assume $\mathrm{rank}(C_N)=N$ and choose $U_N$ according to the minimum-norm cancellation rule so that
\eqref{eq:UN-bound-smin} holds. Then
\begin{equation}
\label{eq:placement-criterion}
\|T_N\| \;\le\; L_N\,\|B\|\,\|U_N\|
\;\lesssim\;
L_N\,\|B\|\,\frac{1}{\sigma_{\min}(C_N)}.
\end{equation}
In particular, a sufficient condition for solvability of \eqref{eq:fp-linear-again} is
\begin{equation}
\label{eq:smin-threshold}
\sigma_{\min}(C_N)\;\gtrsim\; L_N\,\|B\|.
\end{equation}
\end{proposition}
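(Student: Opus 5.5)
The proof is a direct composition of the factorization bound \eqref{eq:TN-factorization} with Lemma~\ref{lem:right-inverse-bound}, followed by an appeal to Proposition~\ref{prop:universal-contraction}. The plan has three steps.

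\emph{Step 1: factorization.} Starting from the representation \eqref{eq:TN-abstract}, $T_N=P_NR_{\mathrm{cl}}Q_NBU_N$, submultiplicativity of operator norms gives
\[
\|T_N\|_{\mathcal L(X_N)}\le \|P_NR_{\mathrm{cl}}Q_N\|_{\mathcal L(\mathcal X)}\,\|B\|_{\mathcal L(\mathbb R^M,\mathcal X)}\,\|U_N\|_{\mathcal L(X_N,\mathbb R^M)}=L_N\|B\|\|U_N\|,
\]
which is \eqref{eq:TN-factorization}. The only point to check is that the norm on $X_N$ is compatible with the norm on $\mathcal X$. On the finite-dimensional space $X_N\subset V$, all the norms involved are equivalent, with constants depending only on $N$. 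These constants are absorbed into the symbol $\lesssim$.

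\emph{Step 2: bounding $\|U_N\|$.} Under the minimum-norm cancellation rule, the cancellation property \eqref{eq:cancellation-property} reduces, via \eqref{eq:feedforward-linear-system}, to the linear system
\[
C_Nu=-\bigl(\langle A_0y,\varphi_k\rangle_H\bigr)_{k=1}^N,
\]
whose minimum-norm solution is $u=-C_N^\dagger(\cdots)$. Hence $U_N=-C_N^\dagger\circ\Lambda_N$, where $\Lambda_N$ is the coordinate map $y\mapsto(\kappa_m\lambda_k a_k)_{k=1}^N$, and $\Lambda_N$ is a bounded diagonal scaling on $X_N$. Lemma~\ref{lem:right-inverse-bound} then gives $\|U_N\|\le\|\Lambda_N\|/\sigma_{\min}(C_N)$. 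Inserting this into Step 1 yields \eqref{eq:placement-criterion}.

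\emph{Step 3: solvability.} Condition \eqref{eq:smin-threshold} is interpreted with the same implicit constant, namely $\sigma_{\min}(C_N)>c\,L_N\|B\|$ with $c$ the constant from Step 2. Under this condition the right-hand side of \eqref{eq:placement-criterion} is strictly less than $1$. Proposition~\ref{prop:universal-contraction} then gives invertibility of $I+T_N$, unique solvability of \eqref{eq:fp-linear-again}, and geometric convergence of the Picard iteration.

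The only delicate point is bookkeeping of the implicit constants. One must make precise that the $\lesssim$ in \eqref{eq:smin-threshold} carries the constants from Step 2 ($\|\Lambda_N\|$ and the norm-equivalence constants on $X_N$), so that the threshold indeed forces $\|T_N\|<1$ rather than merely a bound of order one.
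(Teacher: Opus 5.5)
Your proposal is correct and follows the paper's proof, which simply combines \eqref{eq:TN-factorization} with Lemma~\ref{lem:right-inverse-bound} and then invokes Proposition~\ref{prop:universal-contraction}. Your explicit diagonal scaling $\Lambda_N$, and your remark that \eqref{eq:smin-threshold} must carry the same implicit constant with strict inequality, spell out what the paper leaves implicit; the one slip is a harmless sign, since $-\langle A_0y,\varphi_k\rangle_H=\kappa_m\lambda_k a_k$ gives $U_N=C_N^\dagger\Lambda_N$ rather than $-C_N^\dagger\Lambda_N$, which changes no norm bound.
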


\begin{proof}
Combine \eqref{eq:TN-factorization} with Lemma~\ref{lem:right-inverse-bound}.
\end{proof}

\begin{remark}[Design of placement]
For fixed $N$, the full-row-rank condition $\mathrm{rank}(C_N)=N$ is generic with respect to the point configuration
$\{x_j\}$ (it fails only on a thin set), and oversampling ($M>N$) typically improves $\sigma_{\min}(C_N)$ and robustness.
A practical design principle is to choose $\{x_j\}$ to maximize $\sigma_{\min}(C_N)$ (or, equivalently, minimize $\|C_N^\dagger\|$),
e.g. by greedy selection from a candidate mesh or by D-optimal design maximizing $\det(C_NC_N^\top)$.
\end{remark}

\paragraph{Combined sufficient condition (three options together).}
We summarize the three mechanisms in a single criterion. Combine the $\kappa_m$--driven leakage estimate
\eqref{eq:leakage-kappa} (mechanism (A)) or the decay-rate estimate \eqref{eq:leakage-alpha} (mechanism (B))
with the placement estimate \eqref{eq:UN-bound-smin} (mechanism (C)). For instance, using \eqref{eq:leakage-kappa} and
Lemma~\ref{lem:right-inverse-bound}, a sufficient contraction condition is
\[
\frac{C_S\,\|A_{12}\|}{\kappa_m\lambda_{N+1}-\beta}\;\|B\|\;\frac{1}{\sigma_{\min}(C_N)} \;<\; 1,
\]
which can be enforced by: increasing $\kappa_m$ (stronger tail dissipation), increasing the feedback gain $\lambda$ so that
$C_S$ and/or $\beta$ improve (stronger closed-loop stability), and/or improving the actuator placement to increase $\sigma_{\min}(C_N)$.

\section{Conclusions and perspectives}

\noindent We have combined a discrete effective-medium description of heat generation by
plasmonic nanoparticles with an abstract feedback design for heat-profile
tracking based on point actuators. At the modeling level, we relied on the
rigorous point-approximation framework of \cite{CaoMukherjeeSini2025} to
identify a finite collection of nanoparticles as a system of effective point
actuators for the heat equation: the temperature variation induced by the
particles can be represented, up to small error terms, as a superposition of
heat kernels centered at the nanoparticle locations, with amplitudes solving
a Volterra system driven by the internal electromagnetic energy.

\noindent At the control level, we interpret the heat equation with point actuators in the natural
space $V'$, $V:=D(\mathcal A)$, and consider a feedback law based on the
elliptically smoothed measurements $\mathcal A^{-1}y$.
Using the stabilization theory of \cite{KunRodWal24-cocv}, we obtain exponential decay of the
tracking error in $V'$ under an actuator-family hypothesis, and we recover $H$--tracking after any
positive time $t_0>0$ via parabolic regularization.

\noindent Finally, we coupled the two levels by expressing the effective heat inputs
generated by the nanoparticle ensemble as a linear operator $\mathcal{K}$
acting on a set of electromagnetic illumination parameters. Under suitable
assumptions on the illumination patterns, $\mathcal{K}$ admits a right-inverse
on a relevant finite-dimensional subspace, and the discrepancy between the
ideal feedback inputs and the effective thermo--plasmonic ones can be
quantified in terms of the small geometric parameter $\delta$ and the modeling
error from the discrete effective description. This yields an abstract estimate
on the tracking error between the ideal closed-loop temperature and the
physically realized one, and shows that, in the asymptotic regime considered
in \cite{CaoMukherjeeSini2025}, plasmonic nanoparticles can indeed be used as
actuators for heat-profile tracking.
\bigskip

\noindent Two implementation layers merit further development.
First, while the analysis allows signed coefficients in the abstract intensity parameter, physically one has
$p_\ell(t)=|a_\ell(t)|^2\ge 0$, leading to cone-type feasibility constraints for the reduced actuator input.
Second, the feedback output is formulated via an elliptically regularized point evaluation, which is convenient
for Dirac-actuated heat dynamics but this needs te be linked to more realistic
measurement modalities.
A forthcoming work will incorporate these constraints directly (including nonnegativity and sensor models), and
will provide numerical studies illustrating the closed-loop tracking performance. In addition, we will work directly on the full space avoiding finite-dimensional projections.
\bigskip

\noindent Several directions for further work remain open, including heterogeneous and
structured backgrounds and lower-dimensional configurations with
 robust and optimal designs of nanoparticle configurations
and illuminations, and numerical validation. In addition, we plan to analyze the feedback control part in the whole space, with the diffusion operator $A$ stated now in the whole space with absolutely continuous spectrum. In the same direction, we would like to study tracking other fields, as acoustic fluctuation (in the framework of acoustic cavitation) using either purely acoustic models, see \cite{Mukherjee2024}, in Ultrasound, or multy-physic models as photo-acoustics, see \cite{Ghandriche2022}. Finally, such an approach make also full sense in mechanics where the goal is to generate and track full displacements and not only pressure parts, as in acoustics.    
\bigskip

\noindent
{\bf Acknowledgments:}
The work of S. Rodrigues is funded by national funds through the FCT -- Funda\c{c}\~{a}o para a Ci\^{e}ncia e a Tecnologia, I.P., under the scope of the projects UID/00297/2025 (\url{https://doi.org/10.54499/UID/00297/2025}) and UID/PRR/00297/2025 (\url{https://doi.org/10.54499/UID/PRR/00297/2025}) (Center for Mathematics and Applications -- NOVA Math). The work of M. Sini is partially supported by the Austrian Science Fund (FWF): P 32660 and  P: 36942.

\end{document}